\newtheorem{lem}{Lemma}[section]
\newtheorem{thm}[lem]{Theorem}
\newtheorem{prop}[lem]{Proposition}
\newtheorem{cor}[lem]{Corollary}
\numberwithin{equation}{section}
\newtheorem*{cor*}{Corollary}
\newtheorem*{thm*}{Theorem}
\theoremstyle{definition}
\newtheorem{defi}{Definition}[section]
\theoremstyle{remark}
\newtheorem{rem}[lem]{Remark}
\newcommand{\N}{\mathbb{N}}
\newcommand{\Z}{\mathbb{Z}}
\newcommand{\Q}{\mathbb{Q}}
\newcommand{\R}{\mathbb{R}}
\newcommand{\lf}{\left\lfloor}
\newcommand{\rf}{\right\rfloor}
\renewcommand{\lvert}{\left\vert}
\renewcommand{\rvert}{\right\vert}
\renewcommand{\lVert}{\left\Vert}
\renewcommand{\rVert}{\right\Vert}
\title{Multidimensional van der Corput sets and small fractional parts
  of polynomials}
\author[M. G. Madritsch]{Manfred G. Madritsch}
\address[M. G. Madritsch]{
  \noindent 1. Universit\'e de Lorraine, Institut Elie Cartan de
  Lorraine, UMR 7502, Vandoeuvre-l\`es-Nancy, F-54506, France;\newline
  \noindent 2. CNRS, Institut Elie Cartan de Lorraine, UMR 7502,
  Vandoeuvre-l\`es-Nancy, F-54506, France}
\email{manfred.madritsch@univ-lorraine.fr}
\author[R. F. Tichy]{Robert F. Tichy} \address[R. F. Tichy]{Department
  for Analysis and Number Theory\\Graz University of
  Technology\\A-8010 Graz, Austria} \email{tichy@tugraz.at}
\subjclass[2010]{}
\keywords{Diophantine inequalities, van der Corput set, Heilbronn set,
  generalized prime powers, exponential sums}
\date{\today}
\begin{document}

\begin{abstract}
  We establish Diophantine inequalities for the fractional parts of
  generalized polynomials $f$, in particular for sequences
  $\nu(n)=\lfloor n^c\rfloor+n^k$ with $c>1$ a non-integral real
  number and $k\in\N$, as well as for $\nu(p)$ where $p$ runs through
  all prime numbers. This is related to classical work of Heilbronn
  and to recent results of Bergelson \textit{et al.}
\end{abstract}

\maketitle

\section{Introduction}\label{sec:introduction}

By Dirichlet's approxmiation theorem for any real number $\xi$ and any
positive integer $N$, 
\[\min_{1\leq n\leq N}\lVert n\xi\rVert\leq\frac1{N+1},\]
where $\lVert\cdot\rVert$ denotes the distance to the nearest
integer. Hardy and Littlewood
\cite{hardy_littlewood1914:some_problems_diophantine1} conjectured a
similar result for the distances $\lVert n^k\xi\rVert$ (with given
exponent $k\in\N$).
Vinogradov \cite{vinogradov1927:analytischer_beweis_des} proved the
following
\begin{thm}
  Let $\xi\in\R$ be a given real number and $N\in\N$, a given positive
  integer. Then for every $k\in\N$, there exists an exponent
  $\eta_k>0$ such that
  \[\min_{1\leq n\leq N}\lVert\xi n^k\rVert\ll_k N^{-\eta_k}.\]
\end{thm}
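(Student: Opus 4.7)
The plan is to use a standard Fourier-analytic contradiction argument combined with Weyl's inequality for polynomial exponential sums. Fix $k$, let $\eta_k>0$ be a small parameter to be chosen at the end, and set $\delta=N^{-\eta_k}$. Suppose for contradiction that $\lVert\xi n^k\rVert\geq\delta$ for every $1\leq n\leq N$. Choose a non-negative $1$-periodic smooth function $\psi$ supported, modulo $1$, in $(-\delta,\delta)$, with $\psi(0)=1$, $\hat\psi(0)\gg\delta$, and rapidly-decaying Fourier coefficients $\lvert\hat\psi(h)\rvert\ll\min(\delta,(h^2\delta)^{-1})$; a Fejér-type kernel works. The assumption forces $\sum_{n=1}^{N}\psi(\xi n^k)=0$, while Fourier expansion gives
\[
\sum_{n=1}^{N}\psi(\xi n^k)=\hat\psi(0)N+\sum_{h\neq0}\hat\psi(h)S(h),\qquad S(h):=\sum_{n=1}^{N}e(h\xi n^k).
\]
The main term is of size $\asymp\delta N=N^{1-\eta_k}$, and the task is to contradict this by bounding the error.

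The second step is to control $S(h)$. I would invoke Weyl's inequality: if $h\xi$ admits a Dirichlet approximation $\lvert h\xi-a/q\rvert\leq 1/q^2$ with $(a,q)=1$, then $\lvert S(h)\rvert\ll N^{1+\varepsilon}(1/q+1/N+q/N^k)^{1/2^{k-1}}$. A classical dichotomy appears: either $\xi$ itself has a rational approximation $a/q$ with very small denominator $q\leq N^{\alpha}$, in which case taking $n\leq N$ to be a suitable multiple of $q$ forces $\lVert\xi n^k\rVert$ to be tiny and the theorem is trivial; or all good approximations to $\xi$ and its dilates $h\xi$ have denominators in a wide range $[N^\beta,N^{k-\beta}]$, in which case Weyl's bound gives $\lvert S(h)\rvert\ll N^{1-\sigma}$ uniformly in $1\leq h\leq\delta^{-2}$ for some $\sigma=\sigma(k)>0$. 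The decay of $\hat\psi(h)$ lets me truncate the Fourier series at $h\asymp\delta^{-2}$ losing only an $N^{\varepsilon}$ factor, so the total error is $O(N^{1-\sigma+\varepsilon})$; picking $\eta_k<\sigma$ produces the contradiction.

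The principal obstacle is propagating the good Dirichlet approximation from $\xi$ to all the multiples $h\xi$ uniformly across the summation range of $h$. This is the usual bookkeeping step: a sharper Dirichlet approximation to some specific $h\xi$ either reveals a small-denominator approximation to $\xi$ itself (putting us into the easy branch of the dichotomy) or keeps the relevant denominator in the Weyl-friendly range. Once this is arranged, any choice $\eta_k\asymp 2^{-k}$, comfortably below the Weyl exponent $1/2^{k-1}$, closes the argument; since only the existence of some $\eta_k>0$ is asserted, no optimisation of constants is required.
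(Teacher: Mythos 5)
The paper does not actually prove this statement: it is quoted as Vinogradov's classical theorem, with a citation to \cite{vinogradov1927:analytischer_beweis_des}, so there is no internal proof to compare with. Your proposal is the standard route and is essentially sound: assume $\lVert\xi n^k\rVert\geq\delta=N^{-\eta_k}$ for all $n\leq N$, detect this with a nonnegative Fej\'er-type kernel concentrated on $(-\delta,\delta)$ (the triangle function $\max(0,1-\lVert x\rVert/\delta)$, incidentally the same auxiliary function the paper uses in the proof of Proposition \ref{prop:vdC:Heilbronn}), expand into Fourier series, truncate at $|h|\ll\delta^{-2+\varepsilon}$, and beat the main term $\asymp\delta N$ by a uniform Weyl bound for $S(h)=\sum_{n\leq N}e(h\xi n^k)$; the final choice $\eta_k\asymp 2^{-k}$ is consistent with the saving $\asymp\beta 2^{1-k}$ that Weyl's inequality provides.

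The one step that is not right as written is the dichotomy. A rational approximation $a/q$ to $\xi$ with small denominator does \emph{not} by itself make the theorem trivial: for $\xi=\tfrac12+\tfrac1{2\pi}$ the approximation $\tfrac12$ has denominator $2$, yet ``taking $n$ a multiple of $2$'' reduces you to the original problem for $n^k/(2\pi)$, because what matters is the \emph{quality} $|\xi-a/q|$, not only the size of $q$. The correct form, which your ``obstacle'' paragraph gestures at but does not carry out, is: for each relevant $h$ apply Dirichlet's theorem to $h\xi$ with a large parameter $Q=N^{k-\beta}$, obtaining coprime $a,q$ with $q\leq Q$ and $|h\xi-a/q|\leq 1/(qQ)$. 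If $q\geq N^{\beta}$ for every $h$ in the truncated range, Weyl gives $S(h)\ll N^{1-\beta 2^{1-k}+\varepsilon}$ uniformly and the Fourier argument closes; if instead $q<N^{\beta}$ for some $h$, then $n=qh\leq N^{\beta+2\eta_k}\leq N$ satisfies $\lVert\xi(qh)^k\rVert\leq (qh)^{k-1}\lVert qh\xi\rVert\leq N^{(\beta+2\eta_k)(k-1)-(k-\beta)}$, a negative power of $N$, so the conclusion holds directly — there is no separate ``small denominator of $\xi$'' case, and no propagation from $\xi$ to $h\xi$ is needed. With this repair the argument is complete. For what it is worth, when the paper proves its own theorems of this shape (Theorems \ref{thm:single_heilbronn_set} and \ref{thm:single_prime_heilbronn_set}) it replaces your kernel expansion by the weighted large-sieve lemma (Lemma \ref{baker:thm2.2}), which produces a single frequency $1\leq m\leq M$ with a large exponential sum, and it handles the small-$\lVert m\xi\rVert$ range by exhibiting $n$ with $m\mid\lf f(n)\rf$ — the exact analogue of your easy branch; the two devices are interchangeable in the present setting.
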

Throughout this paper we use the standard notation $\ll$, where the
index denotes the dependence of the implicit constant; furthermore we
use instead of $\ll$ sometimes the $\mathcal{O}$-notation.

In the case of squares Heilbronn
\cite{heilbronn1948:distribution_sequence_n} improved Vinogradov's
exponent to $-\frac12+\varepsilon$ (with arbitrary
$\varepsilon>0$). For the related literature up to 1986 we refer to
the classical monograph of Baker
\cite{baker1986:diophantine_inequalities}. The best known exponent in
the quadratic case is $-\frac47+\varepsilon$ due to Zaharescu
\cite{zaharescu1995:small_values_n}. However, his method is not
applicable to higher powers. It is an open conjecture that $\eta_k$
can be taken as $1-\varepsilon$ with arbitrary $\varepsilon>0$.

Generalizations to arbitrary polynomials $f\in\Z[X]$ with $f(0)=0$ are
due to Davenport~\cite{davenport1967:theorem_heilbronn} and
Cook~\cite{cook1972:fractional_parts_set}. Let
$2\leq k_1\leq\cdots\leq k_s$ be integers, $\xi_i\in\R$ for
$1\leq i\leq s$ and $\ell=2^{1-k_1}+\cdots+2^{1-k_s}$. Then Cook
\cite{cook1976:diophantine_inequalities_with} also showed that
$\lVert
\xi_1n_1^{k_1}+\cdots+\xi_sn_s^{k_s}\rVert\ll_{k_1,\ldots,k_s,\varepsilon}N^{-s/(s+(1-\ell)2^{k_s-1})+\varepsilon}$.
Wooley~\cite{wooley1995:new_estimates_smooth} considered the
Diophantine inequality over smooth numbers to obtain an improvement.
The proofs of these results are based on a sophisticated treatment of
the occurring exponential sums.  In a recent paper L{\^e} and
Spencer~\cite{le_spencer2014:intersective_polynomials_and} proved the
following
\begin{thm}[{\cite[Theorem
    3]{le_spencer2014:intersective_polynomials_and}}] \label{le:spencer:thm3}
  Let $N\in\N$ and $h\in\Z[X]$ be a polynomial with
  integer coefficients such that for every non-zero integer $q$ there
  exists a solution $n$ to the congruence $h(n)\equiv 0\bmod q$. Then
  there is an exponent $\eta>0$ depending only on the degree of
  $h$ such that
  \[\min_{1\leq n\leq N}\lVert\xi h(n)\rVert\ll_h N^{-\eta}\]
  for arbitrary $\xi\in\R$.
\end{thm}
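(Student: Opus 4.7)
The plan is to combine a Dirichlet-type rational approximation of $\xi$ with the intersective hypothesis on $h$, reducing to the classical Vinogradov/Heilbronn framework already stated at the start of the paper. Let $k=\deg h$ and $a_k$ its leading coefficient. Fix a parameter $Q=N^{k-\delta}$ for a small $\delta>0$ to be chosen at the end, and apply Dirichlet's theorem to obtain coprime integers $a,q$ with $1\leq q\leq Q$ and $|\xi-a/q|\leq 1/(qQ)$. Set $\beta=\xi-a/q$, and split into a ``minor arc'' case according to the size of $q$.

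In the minor-arc case $q>N^{\delta}$, Weyl's inequality applied to the exponential sum $\sum_{n\leq N} e(m\xi h(n))$ yields, for each $1\leq m\leq N^{\delta/2}$, a bound of the form $\ll_k N^{1-\eta'}$ with $\eta'=\eta'(k,\delta)>0$. The Erd\H{o}s--Tur\'an inequality then forces the sequence $\{\xi h(n)\}_{n\leq N}$ to have small discrepancy modulo $1$, producing some $n\leq N$ with $\|\xi h(n)\|\ll N^{-\eta}$. In the major-arc case $q\leq N^{\delta}$, I would exploit intersectivity by choosing $n_q\in[0,q)$ with $h(n_q)\equiv 0\pmod q$ and substituting $n=qm+n_q$ for $m$ in a range of length $\asymp N/q$. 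The polynomial $G(m):=h(qm+n_q)/q$ then lies in $\Z[m]$, has degree $k$ and leading coefficient $a_k q^{k-1}$, and a direct computation gives $\xi h(n)=aG(m)+q\beta G(m)\equiv q\beta G(m)\pmod 1$. Decomposing $G(m)=G(0)+m\widetilde G(m)$, subadditivity of $\|\cdot\|$ produces
\[\|\xi h(n)\|\leq \|q\beta\, m\widetilde G(m)\|+|\beta h(n_q)|.\]
The constant contribution is bounded by $|\beta|\,|h(n_q)|\ll q^{k-1}/Q\ll N^{-k(1-\delta)}$, which is negligible. The first term, a polynomial in $m$ of degree $k$ with vanishing constant term, is amenable to Vinogradov's theorem (or its extension to polynomials due to Davenport cited above), producing some $m\leq N/q$ with $\|q\beta\, m\widetilde G(m)\|\ll (N/q)^{-\eta_k}\ll N^{-(1-\delta)\eta_k}$.

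The main obstacle is the parameter-balancing: the exponent $\eta'$ in the minor-arc Weyl bound improves as $\delta$ increases, whereas the major-arc bound deteriorates, so $\delta$ must be chosen to make both cases yield a common positive exponent $\eta=\eta(k)$. A secondary technical point is that Weyl's inequality controls the exponential sum in terms of a coprime approximation to the \emph{leading coefficient} $\xi a_k$ of $\xi h$, rather than to $\xi$ itself; this requires passing from the Dirichlet approximation $a/q$ of $\xi$ to the reduced fraction $(aa_k/d)/(q/d)$ with $d=\gcd(aa_k,q)$ and verifying that the induced denominator still lies in the usable range $(N^{\delta'},N^{k-\delta'})$. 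Both issues are standard; the genuinely new ingredient, compared to the case $h(0)=0$ treated by Davenport, is the use of the intersective root $n_q$ to eliminate the ``constant'' obstruction modulo $q$ that would otherwise block the reduction to a polynomial with zero constant term.
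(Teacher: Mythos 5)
First, a point of comparison: the paper does not prove Theorem~\ref{le:spencer:thm3} at all; it is quoted verbatim from L\^e and Spencer as background motivation, so there is no internal proof to measure your argument against. Judged on its own merits, your outline is essentially the standard route to this result (and close in spirit to L\^e--Spencer's own argument): a dichotomy for the exponential sums $\sum_{n\leq N}e(m\xi h(n))$, with the intersective root $n_q$ entering in the major-arc case through the substitution $n=qm+n_q$. That substitution is correctly executed: $G(m)=h(qm+n_q)/q\in\Z[m]$, $\xi h(n)\equiv q\beta G(m)\pmod 1$, the constant term $q\beta G(0)=\beta h(n_q)$ is negligible since $|h(n_q)|\ll_h q^k$ and $|\beta|\leq 1/(qQ)$, and what remains is $\lVert q\beta\,(G(m)-G(0))\rVert$, a real polynomial in $m$ of degree $k$ with zero constant term, to which the classical uniform theorem on small fractional parts of polynomials without constant term (Davenport/Cook; Schmidt's and Baker's monographs) applies. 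Do make the uniformity explicit when you cite it: you need the exponent \emph{and} the implied constant to depend only on $k$, since $G-G(0)$ has coefficients of size about $q^{k-1}$ and the scaling $q\beta$ is arbitrary; the classical statements do provide this.

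The one step that is genuinely glossed over is the minor-arc case. Weyl's inequality for $m\xi h(n)$ requires a coprime approximation $b/r$ to the leading coefficient $m a_k\xi$ with $\lvert m a_k\xi-b/r\rvert\leq r^{-2}$ and $r$ in a middle range. The approximation you propose to inherit from $\lvert\xi-a/q\rvert\leq 1/(qQ)$ has error at most $m a_k/(qQ)$, and when $q$ is near the top of the range $Q=N^{k-\delta}$ the extra factor $m a_k$ is not absorbed, so the required quality $r^{-2}$ can fail; reducing the fraction, as you suggest, controls the size of the denominator but not the quality, and you also dropped the factor $m$ from the leading coefficient in that remark. The standard repair: for each $m\leq N^{\delta/2}$ apply Dirichlet afresh to $m a_k\xi$ with parameter $N^{k-\delta'}$, and exclude a small resulting denominator $r\leq N^{\delta''}$ (with $\delta''<\delta/2$) by comparing the two rationals $a/q$ and $b/(rma_k)$: if they are distinct, their distance is at least $1/(qrma_k)$, which is incompatible with both lying within the stated distances of $\xi$ when $q>N^{\delta}$; if they coincide, then $q\leq rma_k\ll N^{\delta''+\delta/2}$, contradicting $q>N^{\delta}$. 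With $r$ thus confined to a genuine middle range, Weyl's inequality gives the power saving for every $m\leq N^{\delta/2}$, and either Erd\H{o}s--Tur\'an or the Montgomery--Baker lemma the paper itself uses (Lemma~\ref{baker:thm2.2}) converts this into some $n\leq N$ with $\lVert\xi h(n)\rVert\ll N^{-\eta}$. With that bookkeeping inserted, and $\delta=\delta(k)$ chosen to balance the two cases, your proof is complete.
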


A recent proof of a related well-known conjecture concerning
the Vinogradov integral is also due to
Wooley~\cite{wooley2016:cubic_case_main} for $k=3$ and to Bourgain
\textit{et al.}
\cite{bourgain_demeter_guth2016:proof_main_conjecture} in the general
case. Baker \cite{baker2016:small_fractional_parts} used this approach
to improve on Diophantine inequalities as considered in Theorem
\ref{le:spencer:thm3}.

Danicic \cite{danicic1959:fractional_parts_heta} considered two dimensional
extensions of the above problem and showed that
\[\min_{1\leq n\leq N}\max\left(\lVert \alpha n^2\rVert, \lVert \beta
  n^2\rVert\right)\ll N^{-\eta}\]
uniformly in $N$, $\alpha$ and $\beta$. Higher powers were
investigated by Liu \cite{liu1970:fractional_parts_heta}. Cook
\cite{cook1972:fractional_parts_set, cook1973:fractional_parts_set}
generalized these results to a system of polynomials without constant
term.

\begin{defi}\label{def:jointly_intersective}
  Let $h_1,\ldots,h_k$ be a system of polynomials in $\Z[X]$. This
  system is called jointly intersective if for every $q\neq0$, there
  exists an $n\in \Z$ such that $h_i(n)\equiv 0\pmod q$ for
  $i=1,\ldots k$.
\end{defi}

\begin{rem}
  The concept of jointly intersective polynomials was introduced
  independently (under different names) in different areas by
  Bergelson \textit{et
    al.}~\cite{bergelson_leibman_lesigne2008:intersective_polynomials_and},
  L{\^e}~\cite{le2014:problems_and_results},
  Rice~\cite{rice2013:sarkoezys_theorem_scr} and
  Wierdl~\cite{wierdl1989:almost_everywhere_convergence}.

  Note that the common root condition in Definition
  \ref{def:jointly_intersective} is necessary which is shown by a
  simple counter example in the case $\xi=a/q$.
\end{rem}

Now we link the concept of jointly intersective polynomials with
intersective sets. In the case $k=1$ an arbitrary subset
$\mathcal{I}\subset\Z\setminus\{0\}$ is called intersective if
\[\mathcal{I}\cap(S-S)\neq\emptyset\] whenever the upper density of $S\subset\Z$
is positive. A famous result, independently established by Furstenberg
\cite{furstenberg1977:ergodic_behavior_diagonal} and S\'ark\"ozy
\cite{sarkozy1978:difference_sets_sequences1} states that the set of
$k$-th powers and the set of shifted primes $p+1$ and $p-1$ are
intersective sets. Moreover, a sufficient condition for a set being
intersective is due to Kamae and
Mendès-France~\cite{kamae_mendes1978:van_der_corputs} and was later
extended by Nair~\cite{nair1998:uniformly_distributed_sequences,
  nair1998:uniformly_distributed_sequences2,
  nair1992:certain_solutions_diophantine}. L{\^e} and
Spencer~\cite{le_spencer2014:intersective_polynomials_and} established
the following
\begin{thm}[{\cite[Theorem
    4]{le_spencer2014:intersective_polynomials_and}}] \label{le:spencer:thm4}
  Let $\ell$ be a positive integer, $h_1,\ldots,h_k$ be jointly intersective
  polynomials, and let $A=(a_{ij})$ be an arbitrary $\ell\times k$
  matrix with real entries and let $N\in\N$.  Then there is an
  exponent $\eta>0$ depending only on $\ell$ and on the polynomials $h_i$
  such that
\[\min_{1\leq n\leq N}\max_{1\leq i\leq \ell}\lVert
\sum_{j=1}^ka_{ij}h_j(n)\rVert \ll_{\ell,h_1,\ldots,h_k}N^{-\eta},\]
where the bound is uniform in $A$.
\end{thm}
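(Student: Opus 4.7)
The plan is to lift the Hardy--Littlewood/Heilbronn style argument behind the one-dimensional Theorem~\ref{le:spencer:thm3} to the $\ell$-dimensional torus via Fourier expansion. The decisive observation is that the $\ell$-dimensional exponential sum at a frequency $\mathbf{m} = (m_1,\ldots,m_\ell) \in \Z^\ell$ collapses to a one-dimensional sum:
\[\sum_{n=1}^N e\!\left(\sum_{i=1}^\ell m_i\sum_{j=1}^k a_{ij}h_j(n)\right)=\sum_{n=1}^N e\!\left(\sum_{j=1}^k \beta_j(\mathbf{m})h_j(n)\right),\qquad \beta_j(\mathbf{m})=\sum_{i=1}^\ell m_i a_{ij}.\]
Hence the multidimensional problem is controlled by the same type of exponential sums that govern the one-dimensional case, but with coefficients depending linearly on $A$ and on $\mathbf{m}$.

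First I would set $\delta = N^{-\eta}$ with $\eta>0$ small and study the counting function $\mathcal{N}(\delta)$ of integers $n\leq N$ whose image $\phi(n) := \bigl(\sum_{j=1}^k a_{ij}h_j(n)\bigr)_{1\leq i\leq\ell}$ lies in the $\delta$-box around the origin of $\R^\ell/\Z^\ell$. Using a smooth Fej\'er-type majorant of this box and truncating its Fourier series at height $M\asymp \delta^{-1}$, one obtains a lower bound of the shape
\[\mathcal{N}(\delta)\geq c\,\delta^\ell N - \sum_{0<\lVert\mathbf{m}\rVert_\infty\leq M}\lvert\widehat{w}(\mathbf{m})\rvert\,\lvert S(\mathbf{m})\rvert,\]
where $S(\mathbf{m})$ is the sum displayed above; the task then reduces to proving $\lvert S(\mathbf{m})\rvert \ll N^{1-\rho}$ with $\rho > \ell \eta$, uniformly in the relevant $\mathbf{m}$ and in the matrix $A$.

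For this uniform bound I would split $\mathbf{m}$ into minor- and major-arc frequencies according to the simultaneous rational approximability of the tuple $(\beta_1(\mathbf{m}),\ldots,\beta_k(\mathbf{m}))$. On the minor arcs, a Weyl-differencing argument applied to the highest-degree polynomial $h_{j_0}$ (or a Vinogradov mean-value estimate when the degree is large) yields the required power saving. On the major arcs, where a common denominator $q\leq N^\sigma$ satisfies $\beta_j(\mathbf{m})\approx a_j/q$, the joint intersectivity hypothesis provides $n_0$ with $h_j(n_0)\equiv 0 \pmod{q}$ for every $j$; substituting $n=n_0+qt$ and Taylor-expanding turns $\sum_j \beta_j h_j(n)$ into a polynomial in $t$ whose non-integer part is small, to which Theorem~\ref{le:spencer:thm3} or its proof can be applied to produce either cancellation in $S(\mathbf{m})$ or, when combined with the main-term bookkeeping, the desired $n$ directly.

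The main obstacle is precisely the major-arc step. A one-polynomial argument is not enough: we need a compatible rational approximation with a single denominator $q$ for the entire tuple $(\beta_1,\ldots,\beta_k)$, together with a simultaneous common root of the $h_j$ modulo $q$, which is exactly the purpose of the joint intersectivity hypothesis. A secondary technical difficulty is the uniformity in the matrix $A$: because the $\beta_j(\mathbf{m})$ depend linearly on both $\mathbf{m}$ and $A$, the rational-approximation parameters have to be arranged so that the final exponent $\eta$ depends only on $\ell$ and on the $h_j$, and not on $A$.
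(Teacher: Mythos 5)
You should first be aware that the paper does not prove Theorem~\ref{le:spencer:thm4} at all: it is quoted verbatim from L\^e and Spencer \cite{le_spencer2014:intersective_polynomials_and}, so the only in-paper point of comparison is the authors' closely analogous argument for pseudo-polynomials in Section~\ref{sec:multi-dimens-case}. Your general architecture -- collapsing an $\ell$-dimensional frequency $\mathbf{m}$ to a one-dimensional exponential sum with coefficients $\beta_j(\mathbf{m})=\sum_i m_ia_{ij}$, a minor/major arc dichotomy for these sums, and joint intersectivity entering through a common root modulo the major-arc denominator -- is indeed the standard skeleton and matches both the cited proof and the paper's own strategy; the cosmetic difference is that L\^e--Spencer and the present paper run the counting step through Schmidt's lattice lemma (Lemma~\ref{schmidt:thm14A}) rather than a Fej\'er/Selberg box minorant. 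Note also a small structural point: Weyl differencing applied to $\sum_j\beta_j h_j$ yields rational approximations to the coefficients of that \emph{combined} polynomial, not to the tuple $(\beta_1,\ldots,\beta_k)$; since the theorem does not assume the $h_j$ to be $\Q$-linearly independent, the arcs must be defined through the combined coefficients, and the common-root substitution carried out for the denominator found there.

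The genuine gap is the major-arc step when $\ell\geq2$. A uniform bound $\lvert S(\mathbf{m})\rvert\ll N^{1-\rho}$ cannot hold (for $A=0$, or any $A$ with small rational entries, one has $S(\mathbf{m})\asymp N$ for many $\mathbf{m}$), so, as you anticipate, everything rests on the claim that a large major-arc $S(\mathbf{m}_0)$ ``produces the desired $n$ directly''. But the major-arc information for a single frequency only tells you that the one combination $\sum_i m_{0,i}\sum_j a_{ij}h_j(n)$ is nearly integer-valued along a progression $n=r_D+Dt$; it gives no control whatsoever on the $\ell$ individual forms $\sum_j a_{ij}h_j(n)$, which is what the theorem demands. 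What is missing is the iterative/inductive mechanism that handles the dimensions one at a time: restrict to the progression furnished by joint intersectivity (with roots $r_D$ chosen compatibly across moduli, via the auxiliary intersective polynomials of L\^e--Spencer, so that the final exponent remains uniform in $A$), absorb the nearly rational frequency into a finer lattice, and re-run the whole argument for the remaining directions with a bounded number of iterations. Schmidt's Theorem 14A is designed exactly for this bookkeeping, and the paper's proof of Theorem~\ref{thm:multiple_lattice_approximation} exhibits the same phenomenon in its recursive treatment of small coefficients (the cascade of exponents $\gamma_j$ and the partial-summation splitting). Without such an induction your outline does not close beyond $\ell=1$.
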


Harman \cite{harman1981:trigonometric_sums_over} considered the
sequence $\alpha p^k$ for $\alpha>0$ and $k$ a positive integer, where
$p$ runs through the prime numbers. Baker and Kolesnik
\cite{baker_kolesnik1985:distribution_p_alpha} considered the
distribution modulo one of the more general sequence
$\alpha p^\theta$. Improvements of the latter for the case $\theta=1$
have been established by Matom\"aki
\cite{matomaeki2009:distribution_alpha_p}. Recent refinements of the
statement are given by Baker
\cite{baker2017:fractional_parts_of}. Motivated by the above
observations concerning intersective sets, L{\^e} and Spencer
\cite{le_spencer2015:intersective_polynomials_and} also proved an
extension of Theorem \ref{le:spencer:thm4} to \textit{polynomials
  evaluated at prime numbers}.

The aim of our paper is to establish such Diophantine inequalities for
the Piatetski-Shapiro sequence and for pseudo-polynomial sequences,
for instance for the sequence $n^c+n^k$ with $c>0$ a non-integral real
number and $k\in\N$. In Section \ref{sec:van-der-corput} we introduce
the concept of van der Corput sets and we formulate our results in
detail. In Section \ref{sec:every-van-der} we prove that also in the
multi-variate setting every van der Corput set is a Heilbronn
set. This extends a classical one-dimensional result of Montgomery
\cite{montgomery1994:ten_lectures_on}. Section
\ref{sec:expon-sum-estim} is devoted to exponential sum estimates, the
final Sections \ref{sec:case-single-pseudo}
and~\ref{sec:multi-dimens-case} deal with single and multiple
pseudo-polynomials, respectively.

\section{Van der Corput sets and statement of Results}\label{sec:van-der-corput}

In the following we introduce van der Corput sets for multi-parameter
systems that is for $\Z^k$-actions (in the terminology of ergodic
theory). For more details see Bergelson and Lesigne
\cite{bergelson_lesigne2008:van_der_corput}.

\begin{defi}
A subset $\mathcal{H}\subset\Z^k\setminus\{\mathbf{0}\}$ is a van der Corput set (vdC-set) if
for any family $(u_{\mathbf{n}})_{\mathbf{n}\in\Z^k}$ of complex numbers of modulus 1 such
that
\[\forall \mathbf{h}\in\mathcal{H},\quad
\lim_{N_1,\ldots,N_k\to\infty}
\frac1{N_1\cdots N_k}
\sum_{0\leq \mathbf{n}<(N_1,\ldots,N_k)}u_{\mathbf{n}+\mathbf{h}}\overline{u_{\mathbf{n}}}=0\]
we have
\[\lim_{N_1,\ldots,N_k\to\infty}\frac1{N_1\cdots N_k}
\sum_{0\leq \mathbf{n}<(N_1,\ldots,N_k)}u_{\mathbf{n}}=0.\]
Here in the limit $N_1,\ldots,N_k$ tend to infinity independently and
$<$ stands for the product order.
\end{defi}
Equivalently, $\mathcal{H}$ is a vdC-set if any family
$(x_{\mathbf{n}})_{\mathbf{n}\in\N^k}$ of real numbers having the
property that for all $\mathbf{h}\in\mathcal{H}$ the family
$(x_{\mathbf{n}+\mathbf{h}}-x_{\mathbf{n}})_{\mathbf{n}\in\N^k}$ is
uniformly distributed mod 1, is itself uniformly distributed mod
1. The concept of uniform distribution for multi-parameter systems (so
called multi-sequences) was investigated in various papers, see for
instance Losert and Tichy
\cite{losert_tichy1986:uniform_distribution_subsequences},
Kirschenhofer and
Tichy~\cite{kirschenhofer_tichy1981:uniform_distribution_double},
Tichy and Zeiner \cite{tichy_zeiner2010:baire_results_multisequences}
and the book of Drmota and Tichy
\cite{drmota_tichy1997:sequences_discrepancies_and}.

Van der Corput's difference theorem states that in the case $k=1$ the
full set $\N$ of positive integers is a van der Corput set
(\textit{cf}. {\cite[Theorem
  3.1]{kuipers_niederreiter1974:uniform_distribution_sequences}}).
However, this is only a sufficient condition. Therefore the question
of the necessary ``size'' of vdC sets arises. For various aspects of
van der Corput's difference theorem we refer to the recent paper of
Bergelson und Moreira
\cite{bergelson_moreira2016:van_der_corputs}. Delange observed that
also the sets $q\N$, where $q\geq2$ is an integer, are van der Corput
sets. More general examples like the $k$th powers or shifted primes
$p+1$ and $p-1$ are due to Kamae and
Mend{\`e}s-France~\cite{kamae_mendes1978:van_der_corputs}.  In
particular, they proved in the case $k=1$ that each van der Corput set
is also intersective. The converse does not hold true as it was shown
by Bourgain \cite{bourgain1987:ruzsas_problem_on}.

In his seminal paper Ruzsa
\cite{ruzsa1984:connections_between_uniform} gave four equivalent
definitions of vdC-sets. We refer the interested reader to chapter 2
of Montgomery \cite{montgomery1994:ten_lectures_on} or the important
work of Bergelson and
Lesigne~\cite{bergelson_lesigne2008:van_der_corput} for a detailed
account on vdC-sets.

\begin{defi}\label{def:heilbronn:set}
Let $\mathcal{H}\in\Z^k\setminus\{\mathbf{0}\}$. We call $\mathcal{H}$ a
Heilbronn set if for every $\xi\in\R^k$ and every
$\varepsilon>0$ there is an $\mathbf{h}\in\mathcal{H}$ such that
\[\lVert\mathbf{h}\cdot\xi\rVert<\varepsilon\]
where $\cdot$ denotes the standard inner product.
\end{defi}

Following Montgomery we want to analyze quantitative aspects of
Heilbronn sets. This is related to the spectral definition of van der
Corput sets given by Kamae and Mend\'es-France
\cite{kamae_mendes1978:van_der_corputs} and its multidimensional
variant by Bergelson and
Lesigne~\cite{bergelson_lesigne2008:van_der_corput}. For each subset
$\mathcal{H}\subset\Z^k\setminus\{\mathbf{0}\}$ we denote by
$\mathcal{T}=\mathcal{T}(\mathcal{H})$ the set of real trigonometric
polynomials
\[T(\mathbf{x})=a_0+\sum_{\mathbf{h}\in \mathcal{H}}a_{\mathbf{h}}\cos(2\pi \mathbf{h}\cdot
\mathbf{x})\]
with $T(\mathbf{x})\geq0$ for all $\mathbf{x}\in\R^k$ and $T(\mathbf{0})=1$. Furthermore we
set
\[\delta(\mathcal{H}):=\inf_{T\in\mathcal{T}(\mathcal{H})} a_0.\]

In order to provide a quantitative result on Heilbronn sets we
introduce the following quantity.
\begin{gather}\label{eta}
  \gamma=\gamma(\mathcal{H})=\sup_{\xi\in\R^k}\inf_{\mathbf{h}\in
    \mathcal{H}} \lVert\mathbf{h} \cdot \xi\rVert.
\end{gather}

Then our first result is the following
\begin{thm}\label{thm:vdC_Heilbronn}
Let $\mathcal{H}\subset\Z^k\setminus\{0\}$.
\begin{enumerate}
\item $\mathcal{H}$ is a van der Corput set if and only if
  $\delta(\mathcal{H})=0$.
\item $\mathcal{H}$ is a Heilbronn set if and only if
  $\gamma(\mathcal{H})=0$.
\item $\gamma(\mathcal{H})\leq \delta(\mathcal{H})$.
\item Any van der Corput set is a Heilbronn set.
\end{enumerate}
\end{thm}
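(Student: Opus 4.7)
Statement (2) is an unraveling of definitions: $\mathcal{H}$ is a Heilbronn set iff for every $\xi \in \R^k$ one has $\inf_{\mathbf{h}\in\mathcal{H}} \lVert \mathbf{h}\cdot\xi\rVert = 0$, i.e.\ $\gamma(\mathcal{H})=0$. Statement (4) is an immediate corollary of (1), (2), and (3): if $\mathcal{H}$ is a van der Corput set, then (1) yields $\delta(\mathcal{H})=0$, (3) gives $\gamma(\mathcal{H})\leq\delta(\mathcal{H})=0$, and (2) upgrades this to the Heilbronn property. So the substantive work lies in (1) and (3).

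My plan for (1) is spectral. For sufficiency ($\delta(\mathcal{H})=0 \Rightarrow \mathcal{H}$ vdC), let $(u_{\mathbf{n}})_{\mathbf{n}\in\Z^k}$ be unimodular whose correlations along $\mathcal{H}$ vanish. The multi-parameter Herglotz--Bochner theorem (in the form set up by Bergelson and Lesigne) supplies a probability measure $\nu$ on $\R^k/\Z^k$ whose Fourier coefficients equal the correlations of $(u_{\mathbf{n}})$; in particular $\widehat{\nu}(\mathbf{h})=0$ for every $\mathbf{h}\in\mathcal{H}$. For every $T \in \mathcal{T}(\mathcal{H})$ with constant term $a_0$, non-negativity of $T$ and $\nu$ together with $T(\mathbf{0})=1$ give
\[
\nu(\{\mathbf{0}\}) \;=\; T(\mathbf{0})\,\nu(\{\mathbf{0}\}) \;\leq\; \int T\,d\nu \;=\; a_0 + \sum_{\mathbf{h}\in\mathcal{H}} a_{\mathbf{h}}\,\operatorname{Re}\widehat{\nu}(\mathbf{h}) \;=\; a_0,
\]
and the infimum over $T$ forces $\nu(\{\mathbf{0}\})\leq\delta(\mathcal{H})=0$. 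By Wiener's theorem $(N_1\cdots N_k)^{-2}\lvert\sum u_{\mathbf{n}}\rvert^2 \to \nu(\{\mathbf{0}\}) = 0$, establishing vdC. For the converse (vdC $\Rightarrow \delta=0$), I would argue by contrapositive: if $\delta>0$, the LP dual of the convex program defining $\delta$---via Hahn--Banach / weak-$\ast$ compactness on the probability measures on $\R^k/\Z^k$---produces a probability measure $\nu$ with $\widehat{\nu}|_{\mathcal{H}}=0$ but $\nu(\{\mathbf{0}\})\geq\delta>0$. An ergodic-theoretic realization of this spectral datum yields a deterministic unimodular sequence whose correlations are $\widehat{\nu}$: vanishing on $\mathcal{H}$ yet with non-vanishing Ces\`aro mean, contradicting vdC.

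For (3), I fix $T \in \mathcal{T}(\mathcal{H})$ with constant term $a_0$ and $\xi \in \R^k$; the goal is $\inf_{\mathbf{h}}\lVert\mathbf{h}\cdot\xi\rVert \leq a_0$, which upon taking $\sup_\xi$ and then $\inf_T$ delivers $\gamma(\mathcal{H})\leq\delta(\mathcal{H})$. The plan is to extend Ruzsa's one-dimensional Fej\'er argument: approximate the singular measure on the orbit of $\xi$ by a Fej\'er-type smoothing on $\R^k/\Z^k$, integrate the non-negative $T$ against it, and exploit the normalization $T(\mathbf{0})=1$ together with positivity to force at least one $\mathbf{h}\cdot\xi$ within distance $a_0$ of an integer. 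The main obstacle I anticipate is the absence of Fej\'er--Riesz factorization in dimension $k\geq 2$: the clean one-dimensional proof writes $T = \lvert P\rvert^2/\lvert P(\mathbf{0})\rvert^2$ for a trigonometric polynomial $P$ and reads off the required information from $\lvert P(\xi)\rvert^2\geq 0$, whereas in higher dimensions one must substitute the multi-parameter spectral framework of Bergelson and Lesigne.
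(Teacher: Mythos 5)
Your overall architecture matches the paper's: (2) is definitional, (4) follows from (1)+(3)+(2), and the paper likewise treats (1) as known, citing Kamae--Mend\`es-France and Ruzsa in dimension one and Bergelson--Lesigne for general $k$ rather than reproving it. Your spectral sketch of (1) is therefore optional; note, though, that as written it has soft spots (the correlations need not converge for all $\mathbf{h}$, so one works with weak-$*$ limit points, and Wiener's theorem gives $\limsup\lvert (N_1\cdots N_k)^{-1}\sum u_{\mathbf{n}}\rvert^2\leq\nu(\{\mathbf{0}\})$ rather than an equality), and the hard direction's ``ergodic-theoretic realization'' of the spectral measure by a unimodular sequence is precisely the nontrivial content of Ruzsa's and Bergelson--Lesigne's theorems, which you leave as a black box.

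The genuine gap is in (3), which is the one assertion the paper actually proves (Proposition \ref{prop:vdC:Heilbronn}), and where you stop at a plan because of an anticipated obstacle that does not exist. Montgomery's argument never invokes Fej\'er--Riesz factorization of $T$, so nothing is lost in dimension $k\geq 2$: fix $T\in\mathcal{T}(\mathcal{H})$, $\xi\in\R^k$ and $0<\varepsilon\leq\tfrac12$, take the \emph{one-variable} tent function $f(x)=\max\left(0,1-\lVert x\rVert/\varepsilon\right)$, whose Fourier coefficients are nonnegative with $\widehat f(0)=\varepsilon$, and set $g(\xi)=a_0+\sum_{\mathbf{h}\in\mathcal{H}}a_{\mathbf{h}}f(\mathbf{h}\cdot\xi)$. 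Expanding $f$ in its (absolutely convergent) Fourier series and interchanging sums gives
\[
g(\xi)=\sum_{m\in\Z}\widehat f(m)\,T(m\xi)\;\geq\;\widehat f(0)\,T(\mathbf{0})=\varepsilon,
\]
since every term is nonnegative. The only ``multidimensional'' objects that occur are the scalars $\mathbf{h}\cdot\xi$ and the dilates $m\xi$ along the single direction $\xi$, so the harmonic analysis stays one-dimensional and no multi-parameter spectral framework is needed. If $\varepsilon>a_0$, then $g(\xi)>a_0$ forces some $\mathbf{h}$ with $a_{\mathbf{h}}>0$ and $f(\mathbf{h}\cdot\xi)>0$, i.e.\ $\lVert\mathbf{h}\cdot\xi\rVert<\varepsilon$; letting $\varepsilon\downarrow a_0$, then taking the infimum over $T$ and the supremum over $\xi$, yields $\gamma(\mathcal{H})\leq\delta(\mathcal{H})$. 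Your proposed route of smoothing the orbit measure of $\xi$ is in fact the same idea (the measure $\sum_m\widehat f(m)\delta_{m\xi}$), but you need to carry it out as above rather than defer to Bergelson--Lesigne; as it stands, part (3) of your proposal is not a proof.
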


\begin{rem}
  In the one dimensional case the if-statement of Assertion (1) was
  already proved by Kamae and Mend\'es-France
  \cite{kamae_mendes1978:van_der_corputs} and the only-if-part is due
  to Ruzsa \cite{ruzsa1984:connections_between_uniform}. The general
  case was shown Bergelson and
  Lesigne~\cite{bergelson_lesigne2008:van_der_corput}. Assertion (2)
  is obvious. The one dimensional version of Assertion (3) was already
  shown by Montgomery \cite{montgomery1994:ten_lectures_on}. In
  Section \ref{sec:every-van-der} we prove the multidimensional case
  of Assertion (3). Finally Assertion (4) is a direct consequence of
  Assertion (3).
\end{rem}

In Montgomery \cite{montgomery1994:ten_lectures_on} one can also find
counter examples which show that in general the converse is
false. Former results by Bergelson, Boshernitzan, Kolesnik, Lesigne,
Madritsch, Quas, Son, Tichy and Wierdl provide us with many examples
of vdC-sets:
\begin{itemize}
\item Let $f,g\in\Z[X]$ such that for any $q\in\N$ there exists
  $n\in\N$ such that $q$ divides $f$ and $g$. Then the $2$-dimensional
  set $\{(f(n),g(n))\colon n\in\N\}$ is a vdC-set
  (see~\cite{bergelson_lesigne2008:van_der_corput}).
\item Let $f,g\in\Z[X]$ be polynomials with zero constant term. Then
  the $2$-dimensional set $\{(f(p-1),g(p-1))\colon p\text{ prime}\}$
  is a vdC-set (see~\cite{bergelson_lesigne2008:van_der_corput}).
\item Let $c>1$ be irrational and $b\neq0$. Then the set
  $\{\lfloor bn^c\rfloor\colon n\in\N\}$ is a vdC-set
  (see~\cite{boshernitzan_kolesnik_quas+2005:ergodic_averaging_sequences}).
\item Let $b,d\neq0$ such that $b/d$ is irrational, $c\geq1$, $a>0$
  and $a\neq c$. Then the set
  $\{\lfloor bn^c+dn^a\rfloor\colon n\in\N\}$ is a vdC-set
  (see~\cite{boshernitzan_kolesnik_quas+2005:ergodic_averaging_sequences}).
\item Let $b\neq 0$, $c>1$ be irrational and $d$ any real number. Then
  the set $\{\lfloor bn^c(\log n)^d\rfloor\colon n\in\N\}$ is a vdC-set
  (see~\cite{boshernitzan_kolesnik_quas+2005:ergodic_averaging_sequences}).
\item Let $b\neq 0$, $c>1$ be rational and $d\neq0$. Then the set
  $\{\lfloor bn^c(\log n)^d\rfloor\colon n\in\N\}$ is a vdC-set
  (see~\cite{boshernitzan_kolesnik_quas+2005:ergodic_averaging_sequences}).
\item Let $b,d\neq0$, $c\geq1$ and $a>1$. Then the set
  $\{\lfloor bn^c+d(\log n)^a\rfloor\colon n\in\N\}$ is a vdC-set
  (see~\cite{boshernitzan_kolesnik_quas+2005:ergodic_averaging_sequences}).
\item Let $\alpha_i$ be positive integers and $\beta_i$ be positive
  non-integral reals. Then the $(k+\ell)$-dimensional sets
  \[\left\{\left((p-1)^{\alpha_1},\ldots,(p-1)^{\alpha_k},\lfloor
    (p-1)^{\beta_1}\rfloor,\ldots,\lfloor
    (p-1)^{\beta_\ell}\rfloor\right)\colon p\text{ prime}\right\}\]
  and 
  \[\left\{\left((p+1)^{\alpha_1},\ldots,(p+1)^{\alpha_k},\lfloor
      (p+1)^{\beta_1}\rfloor,\ldots,\lfloor
      (p+1)^{\beta_\ell}\rfloor\right)\colon p\text{ prime}\right\}\]
  are vdC-sets (see
  \cite{bergelson_kolesnik_madritsch+2014:uniform_distribution_prime}
  and \cite{madritsch_tichy2016:dynamical_systems_and}).
\end{itemize}
Furthermore Bergelson \textit{et al.}
\cite{bergelson_kolesnik_son2015:uniform_distribution_subpolynomial}
showed under some mild conditions that for a function $f$ from a
Hardy-field the sequence $(\{f(p)\})_{p\text{ prime}}$ is uniformly
distributed mod 1. From this they deduced general classes of vdC-sets
and by our Theorem \ref{thm:vdC_Heilbronn} they are Heilbronn sets, too.

In Sections \ref{sec:case-single-pseudo} and
\ref{sec:multi-dimens-case} we want to show that for sets of the form
$\{\lfloor n^c\rfloor+n^k\colon n\in\N\}$, where $c>1$ is not an
integer and $k\in\N$, and multidimensional variants thereof, we may
replace $\varepsilon$ by some negative power $N^{-\eta}$ depending
only on the exponents $c$ and $k$. Therefore we want to introduce the
concept of pseudo-polynomials.

\begin{defi}
  Let
  $\alpha_1,\alpha_2,\ldots,\alpha_d,\theta_1,\theta_2,\ldots,\theta_d$
  be positive reals such that $1\leq\theta_1<\cdots<\theta_d$ and at
  least one $\theta_j\not\in\Z$ for $1\leq j\leq d$. A function
  $f\colon\R\to\R$ of the form
  \begin{gather*}
    f(x)=\alpha_1x^{\theta_1}+\cdots+\alpha_dx^{\theta_d}
  \end{gather*}
  is called a pseudo-polynomial.

  By abuse of notation we write $\deg f=\theta_d$.
\end{defi}
We investigate Diophantine inequalities for sequences of the form
$(\lf f(n)\rf)_{n\geq1}$. A simple example is the Piatetski-Shapiro type
sequence $(\lf n^c\rf+n^k)_{n\geq1}$ with $c>1$ a non-integral real
and $k\in\N$.

\begin{thm}\label{thm:single_heilbronn_set}
  Let $\xi$ be a real number, $N\in\N$ sufficiently large and $f$ be a
  pseudo-polynomial. Then there exists an exponent $\eta>0$ depending
  only on $f$ such that
  \[\min_{1\leq n\leq N}\lVert\xi\lf f(n)\rf\rVert\ll_f N^{-\eta}.\]
\end{thm}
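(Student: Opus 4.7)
The plan is to follow the classical Hardy--Littlewood--Vinogradov scheme adapted to the sequence $(\lfloor f(n)\rfloor)_{n\leq N}$. Suppose for contradiction that $\lVert \xi \lfloor f(n) \rfloor \rVert \geq \Delta$ for every $1\leq n\leq N$, with $\Delta = N^{-\eta}$ and $\eta>0$ to be fixed. Applying the Erd\H{o}s--Tur\'an inequality to the finite sequence $(\xi\lfloor f(n)\rfloor \bmod 1)_{1\leq n\leq N}$ and choosing $H$ to be a small power of $N$, I find an integer $1\leq h\leq H$ satisfying
\[\biggl|\sum_{n=1}^{N} e(h\xi\lfloor f(n)\rfloor)\biggr|\gg\frac{N\Delta}{\log H}.\]
The proof then reduces to showing that the inner sum, call it $S(h)$, admits a power-saving estimate $S(h)\ll N^{1-\eta'}$ uniformly in $1\leq h\leq H$; this contradicts the lower bound as soon as $\eta$ is chosen smaller than $\eta'$.

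To dispose of the floor function, I split
\[e(h\xi\lfloor f(n)\rfloor) = e(h\xi f(n))\,e(-h\xi\{f(n)\})\]
and expand the $1$-periodic factor $y\mapsto e(-h\xi\{y\})$ into its Fourier series $\sum_{m\in\Z} c_m(h\xi)\, e(my)$ with coefficients $c_m(\alpha)=(1-e(-\alpha-m))/(2\pi i(\alpha+m))$, satisfying $|c_m(\alpha)|\ll 1/(1+|\alpha+m|)$. Truncating at $|m|\leq M$ and absorbing the tail by a Vaaler-type trigonometric majorant argument (at the cost of a logarithmic factor and an error of size $O(N/M)$) yields
\[S(h) = \sum_{|m|\leq M} c_m(h\xi)\, T(h\xi+m)+\text{error},\qquad T(\beta):=\sum_{n\leq N}e(\beta f(n)).\]
The problem therefore reduces to a power-saving bound on $T(\beta)$ for all nonzero $\beta$ of size $O(H+M)$; the genuinely degenerate case $\beta=0$ (which forces $\xi$ to be rational with small denominator) can be handled separately by a direct equidistribution argument for $\lfloor f(n)\rfloor$ modulo that denominator.

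For such $\beta$, the pseudo-polynomial exponential-sum estimates developed in Section~\ref{sec:expon-sum-estim} (variants of van der Corput's $k$-th derivative test that exploit the non-integer exponent $\theta_j$ of $f$) deliver $T(\beta)\ll N^{1-\eta'}$ with $\eta'>0$ depending only on $f$. Reinserting this bound and using $\sum_{|m|\leq M}|c_m(h\xi)|\ll\log M$ yields $|S(h)|\ll N^{1-\eta'}\log M+N/M$, and balancing $\Delta$, $H$ and $M$ against $\eta'$ produces the announced exponent $\eta>0$. The hard part is precisely this uniform bound on $T(\beta)$: when $|\beta|$ is very small the relevant derivatives of $\beta f$ also shrink and van der Corput's test must be tuned to the size of $\beta$, while for intermediate $\beta$ all of the cancellation has to come from the non-integer part of $f$, since the integer exponents on their own might contribute sums that individually fail the derivative test. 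Verifying that the pseudo-polynomial structure still permits a power saving uniformly in $\beta$ is exactly what Section~\ref{sec:expon-sum-estim} has to provide, and once that is in hand Theorem~\ref{thm:single_heilbronn_set} reduces to the parameter bookkeeping sketched above.
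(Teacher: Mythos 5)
Your overall skeleton matches the paper's: argue by contradiction, use a large-sieve/Erd\H{o}s--Tur\'an step to produce some $1\leq h\leq H$ with $\bigl|\sum_{n\leq N}e(h\xi\lf f(n)\rf)\bigr|$ large (the paper uses the weighted bound of Lemma \ref{baker:thm2.2}), strip the floor function by expanding a periodic function (the paper uses digit intervals plus Vaaler's polynomials, Lemma \ref{vaaler:indicator:function}; your Fourier-series-with-majorant variant is essentially equivalent), and then invoke the pseudo-polynomial exponential sum estimates of Sections \ref{sec:expon-sum-estim}--5. Up to that point the two arguments are the same reduction.

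The genuine gap is your claim that the problem ``reduces to a power-saving bound on $T(\beta)$ for all nonzero $\beta$ of size $O(H+M)$,'' with only the exact case $\beta=0$ treated separately. No such uniform bound exists: if $0<\lvert\beta\rvert\leq N^{-\deg f-1}$, say, then $\beta f(n)=o(1)$ for all $n\leq N$ and $T(\beta)\sim N$, so no power saving is possible however one tunes van der Corput's test. These frequencies $\beta=h\xi+m$ occur exactly when $\lVert h\xi\rVert$ is extremely small but nonzero, which is not excluded by your hypothesis and is not the same as $\xi$ being rational; in that regime the lower bound on $S(h)$ is simply not a contradiction, and the parameter bookkeeping cannot close. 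This is why the paper splits according to whether $\lVert m\xi\rVert\geq N^{1-\rho-\deg f}$ or not, and in the small case abandons the exponential-sum comparison entirely: it shows, via Vaaler polynomials and Proposition \ref{prop:largeintegers} applied to the frequencies $h/m$ with $X=M^{1/(1-\rho)}$, that the values $f(n)/m$ equidistribute enough to produce some $n\leq X$ with $m\mid\lf f(n)\rf$, and then $\lVert\xi\lf f(n)\rf\rVert\leq\lVert m\xi\rVert\cdot\lf f(n)\rf/m\leq N^{1-\rho-\deg f}M^{\deg f/(1-\rho)}$, which is smaller than $N^{-\eta}$ and contradicts the standing assumption directly. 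Your sketch needs this (or an equivalent) additional idea for the near-integer regime of $h\xi$; your ``direct equidistribution argument'' for rational $\xi$ points in the right direction but, as stated, covers only $\beta=0$ and not the genuinely problematic small nonzero $\beta$.
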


When applying our result to the sequence $(\lf n^c\rf+n^k)_{n\geq1}$ we
obtain the following
\begin{cor}
  Let $\xi$ be real, $c>1$ be a non-integral real number, $N\in\N$
  sufficiently large and $k\in\N$. Then for arbitrary $\varepsilon>0$
\[\min_{1\leq n\leq N}\lVert \xi\lf n^c+n^k\rf\rVert\ll
\begin{cases}N^{-\frac12\frac1{2^{\lceil c\rceil+1}-1}+\varepsilon}
  &\text{if }c>k,\\
N^{-\frac{1}{2^{k-1}(k+2)}+\varepsilon}
&\text{if }c<k.
\end{cases}\]
\end{cor}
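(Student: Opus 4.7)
The plan is to apply Theorem \ref{thm:single_heilbronn_set} to the pseudo-polynomial $f(x)=x^c+x^k$, writing $(\theta_1,\theta_2)=(k,c)$ if $c>k$ and $(\theta_1,\theta_2)=(c,k)$ if $c<k$. Then $1\leq\theta_1<\theta_2$ holds, at least one of the two exponents is non-integral since $c\notin\Z$, and $\deg f=\max(c,k)$; hence $f$ satisfies the pseudo-polynomial definition and Theorem \ref{thm:single_heilbronn_set} yields some $\eta_f>0$. What remains is to make this exponent explicit in the two regimes, which forces us to open the proof of Theorem \ref{thm:single_heilbronn_set} in Section \ref{sec:case-single-pseudo} and track all parameters.

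That proof removes the floor function via a Vaaler type (or Erd\H{o}s--Tur\'an) truncation of the Fourier expansion of $\{f(n)\}$ and thereby reduces the question to a uniform estimate
\[
\max_{1\leq h\leq H}\Bigl|\sum_{n\leq N}e\bigl(h\xi f(n)\bigr)\Bigr|\ll N^{1-\delta},
\]
whose right-hand side is supplied by the exponential-sum estimates of Section \ref{sec:expon-sum-estim}. Such a bound is then inverted by Dirichlet's box principle (or a Parseval-type argument) to produce the desired small-fractional-parts statement; the interplay between $\delta$, $H$ and the final exponent $\eta$ is where the numerical factor in each regime is fixed.

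In the regime $c>k$ the leading term of $f$ is the non-integer power $x^c$; one applies the van der Corput $A$-process $\lceil c\rceil-1$ times to turn the phase into an essentially polynomial one of degree $\lceil c\rceil$, to which Weyl's inequality applies and yields a Heilbronn-type saving of size $(2^{\lceil c\rceil+1}-1)^{-1}$. Optimising the truncation height $H$ against this saving and absorbing the logarithmic factors into $\varepsilon$ produces exactly the exponent $\tfrac12(2^{\lceil c\rceil+1}-1)^{-1}$. In the regime $c<k$ the integer polynomial $n^k$ dominates and the fractional-power correction $n^c$ is absorbed into the error; the classical Davenport--Heilbronn treatment of polynomials of degree $k$ then yields the exponent $(2^{k-1}(k+2))^{-1}$.

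The main obstacle is the joint optimisation, in each of the two regimes, of three parameters: the frequency truncation height $H$, the Fourier cut-off in the Vaaler approximation, and the depth of the $A$-process. These must be chosen so that the exponential-sum bound of Section \ref{sec:expon-sum-estim} and the inversion step match up, and one must also verify that the fractional-power correction coming from the subleading term genuinely lies below the main exponential cancellation in both regimes; once this is settled, the claimed exponents fall out after the usual $\varepsilon$ is spent on the logarithmic factors inherent to Weyl's inequality and to Vaaler's approximation.
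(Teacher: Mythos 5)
Your top-level route is the paper's: the corollary is obtained by running the proof of Theorem \ref{thm:single_heilbronn_set} for $f(n)=n^{c}+n^{k}$ (note $\lf n^c+n^k\rf=\lf n^c\rf+n^k$) and making the exponent explicit; in that proof the admissible $\eta$ is $\min\bigl(\rho 2^{1-k},\,(8KL-4K)^{-1},\,(\deg f+\rho-1)/(1+\deg f/(1-\rho))\bigr)$, coming from Proposition \ref{prop:mediumintegers}, Proposition \ref{prop:largeintegers} and the divisibility argument for small $\lVert m\xi\rVert$. However, your sketch of how the two displayed exponents arise has genuine gaps. For $c<k$ you claim that the term $n^{c}$ is ``absorbed into the error'' and that the classical Weyl/Davenport--Heilbronn bound for degree-$k$ polynomials finishes the job. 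That step fails: in the relevant medium range $N^{3\rho-k}<\lvert\beta\rvert\le N^{\rho-c}$ the contribution $\beta n^{c}$ has size up to $N^{\rho}$ and cannot be discarded; the paper must separate it from the polynomial inside the Weyl differencing by partial summation (Lemma \ref{lem:mediumintegers}, which is what produces the factor $1+G$), and it needs the rational approximation of the leading coefficient with denominator $b\in[N^{2\rho},N^{k-2\rho}]$ (Lemma \ref{lem:approximation}), which is available only because $\lvert\beta\rvert$ is bounded below. The saving $\rho 2^{1-k}$, and hence the exponent $1/(2^{k-1}(k+2))$, comes from this analysis, not from a classical polynomial estimate plus an absorbed error.

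You also never treat the regime $\lVert m\xi\rVert\le N^{1-\rho-\deg f}$: there the frequencies $m\xi+h$ appearing after the Vaaler approximation can be arbitrarily close to integers, so no nontrivial exponential-sum bound is possible; the paper handles this by producing $n\le M^{1/(1-\rho)}$ with $m\mid\lf f(n)\rf$, which is what yields the third constraint $\eta<(\deg f+\rho-1)/(1+\deg f/(1-\rho))$, and to assert the corollary one must check this constraint is not the binding one. Finally, your accounting for $c>k$ is off: the factor $\tfrac12$ in $\tfrac12(2^{\lceil c\rceil+1}-1)^{-1}$ is not produced by optimising the truncation height $H$ (with $q=H=N^{\gamma}$ the floor-removal costs only $N^{\varepsilon}$); it sits inside the van der Corput exponent $1/(4KQ-2K)$ of Lemma \ref{bkmst:lem2.5}, applied with trivial polynomial part ($K=1$, which is legitimate precisely because $k<c$ allows one to treat $\beta(x^{c}+x^{k})$ as the smooth function $g$ with $G\asymp\lvert\beta\rvert X^{c}$) and $Q=2^{\lceil c\rceil}$. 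If instead one differenced away the polynomial $n^{k}$, as your ``$A$-process then Weyl'' description suggests, the saving would also depend on $k$ and would not match the stated exponent. So while the plan of specializing Theorem \ref{thm:single_heilbronn_set} is correct, the parameter tracking the corollary actually requires is missing, and two of the steps you propose for it would not go through as described.
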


\begin{rem}
  Using the methods of Mauduit and
  Rivat~\cite{mauduit_rivat1995:repartition_des_fonctions,
    mauduit_rivat2005:proprietes_q_multiplicatives} and of
  Morgenbesser~\cite{morgenbesser2011:sum_digits_lfloor} combined with
  Spiegelhofer~\cite{spiegelhofer2014:piatetski_shapiro_sequences} and
  M{\"u}llner and
  Spiegelhofer~\cite{muellner_spiegelhofer2015:normality_thue}, who
  considered Beatty sequences, one could improve this exponent in the
  case $c>k$.
\end{rem}

Similarly to the results above we may consider sequences over the primes.

\begin{thm}\label{thm:single_prime_heilbronn_set}
  Let $\xi$ be a real number, $N\in\N$ sufficiently large and $f$ be a
  pseudo-polynomial. Then there exists an exponent $\eta>0$ depending
  only on $f$ such that
  \[\min_{\substack{1\leq p\leq N\\p\text{ prime}}}\lVert\xi\lf f(p)\rf\rVert\ll_f N^{-\eta}.\]
\end{thm}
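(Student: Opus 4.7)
The plan is to mirror the strategy for Theorem \ref{thm:single_heilbronn_set} but with the additional ingredient of a prime-restricted sieve identity. By a standard Erd\H{o}s--Tur\'an reduction, it suffices to establish a power-saving estimate of the form
\[
S_h(\xi)
:=\sum_{\substack{p\leq N\\ p\text{ prime}}}e\bigl(h\xi\lf f(p)\rf\bigr)
\ll_f \frac{N}{\log N}\,N^{-\delta}
\]
uniformly for integers $h$ in a dyadic range $1\leq h\leq H=N^{\eta_0}$, for some $\eta_0,\delta>0$. Summing such bounds over $h\leq H$ via the Erd\H{o}s--Tur\'an inequality then produces a prime $p\leq N$ for which $\lVert\xi\lf f(p)\rf\rVert\ll N^{-\eta}$ with a suitable $\eta>0$.

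To remove the floor I would use a Vaaler approximation (or the smoothed Fourier expansion of the sawtooth function) to write
\[
e\bigl(h\xi\lf f(p)\rf\bigr)=e(h\xi f(p))\,e(-h\xi\{f(p)\}),
\]
and expand the second factor into a Fourier series truncated at height $L\ll H(\log N)^A$. This reduces $S_h(\xi)$ to a linear combination, with harmless coefficients, of sums
\[
T(\beta):=\sum_{\substack{p\leq N\\ p\text{ prime}}}e(\beta f(p))
\]
for real parameters $\beta=(h+\ell)\xi+\ell'$ of controlled size, together with an acceptable tail error.

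To handle the prime restriction I would apply Vaughan's identity (or Heath-Brown's identity if finer control is needed) to decompose $T(\beta)$ into $O((\log N)^{O(1)})$ Type I sums of the form $\sum_{m\leq M}a_m\sum_{n\leq N/m}e(\beta f(mn))$ and Type II sums of the form $\sum_{M_1<m\leq M_2}\sum_{N_1<n\leq N_2}a_mb_n\,e(\beta f(mn))$, where $|a_m|,|b_n|\ll\tau(m)\log N$. The Type I sums can be handled by first performing the inner summation in $n$ and invoking van der Corput's $B$-process, or by repeated Weyl differencing adapted to the pseudo-polynomial $f$; these are precisely the exponential sum estimates established in Section \ref{sec:expon-sum-estim}. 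The Type II sums are attacked via Cauchy--Schwarz followed by a Weyl-type differencing in $n$, which again reduces matters to exponential sums over $f$ that can be bounded through the techniques of Section \ref{sec:expon-sum-estim}.

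The main obstacle will be the Type II estimate, because pseudo-polynomials are not polynomials and the standard Vinogradov major/minor arc treatment does not apply directly. The key observation is that after a bounded number of differencings the non-integer exponent $\theta_d$ persists as a controlled derivative of $f$, enabling van der Corput's exponent-pair machinery in place of Weyl's inequality. One must then carefully optimize the ranges of $M$ in Vaughan's identity together with the size of $H$ and $L$ from the Vaaler step, and take $\eta$ as the minimum of the exponents coming from Type I sums, Type II sums, and the Erd\H{o}s--Tur\'an reduction. Since all these exponents depend only on $\theta_1,\dots,\theta_d$ and $\alpha_1,\dots,\alpha_d$, the resulting $\eta>0$ depends only on $f$, as claimed.
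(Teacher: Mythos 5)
Your outline reproduces the paper's machinery for the exponential sums (Vaughan's identity, Type I/II decomposition, Weyl/van der Corput differencing adapted to pseudo-polynomials — this is exactly what Propositions \ref{prop:largeprimes} and \ref{prop:mediumprimes} do), and the choice of Erd\H{o}s--Tur\'an instead of the Montgomery/Baker lemma (Lemma \ref{baker:thm2.2}) used in the paper is only a cosmetic difference. But there is a genuine gap at the very first step: the claimed reduction ``it suffices to prove $S_h(\xi)\ll N^{1-\delta}/\log N$ uniformly in $h\leq H$ and in $\xi$'' is false, because no such uniform power saving exists. If $\xi$ is rational with small denominator $q\leq H$ (or, after your Vaaler/Fourier expansion, whenever the resulting frequency $\beta=h\xi+\ell$ satisfies $\lvert\beta\rvert\leq N^{-\deg f}$, say), the sum $\sum_{p\leq N}e(\beta\lf f(p)\rf)$ has essentially no cancellation and is of size $\asymp\pi(N)$; yet the theorem must hold for \emph{every} real $\xi$. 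All the estimates of Sections \ref{sec:expon-sum-estim} and the following one carry lower bounds on $\lvert\beta\rvert$ (of the shape $N^{3\rho-k}$ or $N^{\rho-\theta_r}$), so your ``parameters of controlled size'' cannot be pushed below these thresholds, and the equidistribution route simply breaks down there.

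The paper closes this case by a different idea: when $\lVert m\xi\rVert\leq N^{1-\rho-\deg f}$ one abandons cancellation altogether and instead shows, by counting with Vaaler polynomials and applying Proposition \ref{prop:largeprimes} to the frequencies $h/m$, that there is a prime $p\leq M^{1/(1-\rho)}$ with $m\mid\lf f(p)\rf$; writing $\lf f(p)\rf=mr$ then gives $\lVert\xi\lf f(p)\rf\rVert\leq\lvert r\rvert\,\lVert m\xi\rVert\leq N^{1-\rho-\deg f}M^{\deg f/(1-\rho)}$, which is the required smallness directly, without any exponential-sum saving. Your proposal has no substitute for this step, so as written it cannot cover small or rational $\xi$ (nor irrational $\xi$ for which some $h\xi$ is abnormally close to an integer). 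A secondary point: in the regime where the polynomial part dominates ($\deg f=k>\theta_r$), the saving in the Type II sums does not come from the surviving non-integer exponent after differencing, but from a Dirichlet approximation to the leading coefficient $\xi\alpha$ (Lemma \ref{lem:approximation}) fed into Harman-type bilinear estimates (Lemmas \ref{har:cor:lem3} and \ref{har:lem4}); your sketch would need this ingredient spelled out as well.
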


\begin{cor}
  Let $\xi$ be real, $c>1$ be a non-integral real number, $N\in\N$ sufficiently large and
  $k\in\N$. Then for arbitrary $\varepsilon>0$
\[\min_{\substack{1\leq p\leq N\\p\text{ prime}}}\lVert \xi\lfloor p^c+p^k\rfloor\rVert\ll
\begin{cases}
N^{-\frac12\frac1{2^{\lceil c\rceil+1}-1}+\varepsilon} &\text{if
}c>k,\\
N^{-\frac{1}{4^{k-1}(k+2)}+\varepsilon}
&\text{if }c<k.
\end{cases}
\]
\end{cor}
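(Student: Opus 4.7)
The plan is to apply Theorem~\ref{thm:single_prime_heilbronn_set} to the pseudo-polynomial $f(x)=x^c+x^k$ and to extract the explicit value of $\eta$ from its proof in each of the two regimes. First I verify that $f$ satisfies the definition of a pseudo-polynomial: we have $d=2$, positive coefficients $\alpha_1=\alpha_2=1$, and exponents $\theta_1=\min(c,k)$, $\theta_2=\max(c,k)$, with the non-integrality requirement supplied by $c$. Hence Theorem~\ref{thm:single_prime_heilbronn_set} produces an exponent $\eta>0$ depending only on $c$ and $k$, and the remaining work is to identify it.

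In the regime $c>k$, the leading behaviour of $f$ is the non-polynomial term $n^c$. After the Fourier-analytic reduction used in the proof of Theorem~\ref{thm:single_prime_heilbronn_set} (expanding $\lf\cdot\rf$ via a Vaaler-type approximation), the problem becomes one of bounding exponential sums of the form $\sum_{p\le N}e(\xi f(p)+\beta p)$ on the minor arcs. Iterating the van der Corput $A$-process on the order of $\lceil c\rceil$ times reduces the phase to one with derivative of fractional order in $N$, a regime in which the Kusmin--Landau inequality (or a single $B$-process step) yields a power saving. Because the dominant oscillation is governed by the irrational exponent $c$ and not by the polynomial term $n^k$, the passage from a sum over integers to a sum over primes via Vaughan's identity costs nothing extra, and one recovers the same exponent as in the integer case, namely $\eta=\tfrac12\cdot 1/(2^{\lceil c\rceil+1}-1)$.

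In the regime $c<k$, the term $n^k$ dominates. After absorbing the lower-order contributions of $\lf n^c\rf$ using the uniform distribution of $(\xi n^c)_n$, the problem reduces to bounding the Weyl-type sum $\sum_{p\le N}e(\xi p^k+\beta p)$. The standard treatment, using Vaughan's identity to split into type~I and type~II sums, Cauchy--Schwarz to convert the type~II sums to bilinear form, and Weyl differencing $k-1$ times, gives a minor-arc bound of the form $N^{1-1/(4^{k-1}(k+2))+\varepsilon}$. The factor $4^{k-1}$ in place of the $2^{k-1}$ appearing in the unrestricted-integer case is precisely the square-root loss from Cauchy--Schwarz in the passage to bilinear sums, and this feeds directly into the claimed exponent $\eta=1/(4^{k-1}(k+2))$.

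The main obstacle will be the careful bookkeeping through the proof of the general theorem: tracking the truncation parameter in the Fourier expansion of $\lf\cdot\rf$, the Dirichlet denominator chosen for $\xi$, the depth of differencing, and the Vaughan-identity thresholds, and verifying that these combine to yield exactly the stated exponents rather than weaker savings. The separation into the two cases $c>k$ and $c<k$ reflects which of the two monomials governs the oscillation of the exponential sum, and the equality of the $c>k$ exponent with the integer case in the analogue of the previous corollary reflects the fact that the irrational monomial $n^c$ already supplies enough cancellation on its own, with no additional loss on primes.
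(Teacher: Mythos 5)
Your overall strategy --- specialize the quantitative proof of Theorem~\ref{thm:single_prime_heilbronn_set} to $f(x)=x^c+x^k$ and read the exponent off that proof --- is exactly what the paper intends: the corollary carries no separate proof, and the exponent is meant to come from the final inequality $\eta<\min\bigl(\rho4^{1-k},\,\tfrac1{64KL^5-4K},\,\tfrac{\deg f+\rho-1}{1+\deg f/(1-\rho)}\bigr)$ with $K=2^k$, $L=2^{\lf c\rf}$ and an admissible choice of $\rho$ subject to \eqref{rho}. But your proposal stops short of the step that constitutes the actual content here: you explicitly defer ``the careful bookkeeping'', i.e.\ the extraction and verification of the two stated constants, so nothing in the write-up certifies that the exponents in the statement are the ones that come out.

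More seriously, in the regime $c>k$ your route to the constant rests on the assertion that the passage from integers to primes via Vaughan's identity ``costs nothing extra'', so that the prime exponent coincides with the integer one. That is not supported by the machinery you would have to invoke: in the large-coefficient range the prime estimate (Proposition~\ref{prop:largeprimes}) only saves $N^{-1/(64KL^5-4K)+\varepsilon}$, visibly weaker than the integer-case saving $N^{-1/(8KL-4K)}$ of Proposition~\ref{prop:largeintegers}, precisely because the type~II sums force an extra Cauchy--Schwarz and a deeper differencing before Lemma~\ref{bkmst:lem2.5} applies. So the specific value $\tfrac12(2^{\lceil c\rceil+1}-1)^{-1}$ cannot be obtained by citing ``no loss''; you would have to track how the Vaughan ranges interact with the differencing in this particular case, or settle for a weaker constant. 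In the regime $c<k$ your sketch ``absorbs'' the contribution of $n^c$ by uniform distribution of $(\xi n^c)_n$, whereas the argument must keep the smooth part inside the phase: it is separated by partial summation, and the saving hinges on the rational approximation of the leading coefficient $\xi\alpha$ supplied by Lemma~\ref{lem:approximation} together with Lemma~\ref{nat:lem4.9}; moreover the final constant $1/(4^{k-1}(k+2))$ depends on which value of $\rho$ is admissible, which you never fix. Your attribution of the factor $4^{k-1}$ to the Cauchy--Schwarz loss in the bilinear (type~II) sums is correct in spirit, but as it stands the proposal identifies the right theorem to apply without deriving the exponents it is supposed to prove.
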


Finally we state multidimensional variants of these estimates.

\begin{thm}\label{thm:multiple_heilbronn_set}
  Let $\ell$ be a positive integer, $f_1,\ldots,f_k$ be $\Q$-linearly
  independent pseudo-polynomials, let $A=(a_{ij})$ be an arbitrary
  $\ell\times k$ matrix with real entries and let $N\in\N$.  Then
  there is an exponent $\eta>0$ depending only on $\ell$ and on the
  polynomials $f_i$ such that
  \[\min_{1\leq n\leq N}\max_{1\leq i\leq \ell}\lVert
  \sum_{j=1}^ka_{ij}\lf f_j(n)\rf\rVert \ll_{\ell,f_1,\ldots,f_k}N^{-\eta},\]
  where the bound is uniform in $A$.
\end{thm}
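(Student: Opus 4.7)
The plan is to adapt the strategy that L\^e and Spencer used to derive their Theorem \ref{le:spencer:thm4} from the single-variable estimate, replacing their polynomial exponential sum bounds by the pseudo-polynomial bounds of Section \ref{sec:expon-sum-estim}. First I would set $L_i(n)=\sum_j a_{ij}\lf f_j(n)\rf$ and assume, for contradiction, that $\min_{n\leq N}\max_i\lVert L_i(n)\rVert\geq\Delta$ with $\Delta=N^{-\eta}$. Applying a multi-dimensional Vaaler--Beurling majorant that detects the event $\{\lVert L_i(n)\rVert\geq\Delta\text{ for every }i\}$ and summing over $n\leq N$ converts this lower bound into an inequality of the form
\[
N\ll_\ell \Delta^{-\ell}\sum_{\substack{\mathbf{m}\in\Z^\ell\setminus\{\mathbf{0}\}\\ |m_i|\leq 1/\Delta}}\prod_i(1+|m_i|)^{-1}|S(\mathbf{m})|,\qquad S(\mathbf{m})=\sum_{n=1}^N e\Bigl(\sum_j\beta_j\lf f_j(n)\rf\Bigr),
\]
with $\beta_j=\beta_j(\mathbf{m})=\sum_i m_i a_{ij}$. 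This is the step responsible for the uniformity in $A$, since every dependence on the matrix is now absorbed into the frequencies $\beta_j$.

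Next I would linearise the floors by writing $\lf f_j(n)\rf=f_j(n)-\{f_j(n)\}$ and expanding each fractional part in a truncated Fourier series via Vaaler's lemma. This rewrites $S(\mathbf{m})$ as an averaged combination of genuine one-dimensional exponential sums
\[T(\boldsymbol{\gamma})=\sum_{n=1}^N e\Bigl(\sum_j\gamma_j f_j(n)\Bigr)\]
with real frequencies $\gamma_j$ built from the $\beta_j$ and the Vaaler coefficients. The $\Q$-linear independence of $f_1,\ldots,f_k$ then ensures that whenever $\boldsymbol{\gamma}\neq\mathbf{0}$, the function $\sum_j\gamma_j f_j$ remains a genuine pseudo-polynomial of degree $\max_j\deg f_j$, so the exponential sum bounds of Section \ref{sec:expon-sum-estim} yield $|T(\boldsymbol{\gamma})|\ll N^{1-\eta_0}$ for an $\eta_0>0$ depending only on the $f_j$. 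Substituting this back and optimising $\eta$ in terms of $\ell$ and $\eta_0$ will produce the required contradiction and deliver the stated power saving.

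The hard part will be establishing the bound on $T(\boldsymbol{\gamma})$ \emph{uniformly} over all frequencies appearing in the Fourier expansion, in particular in the regime where some $\beta_j$ are very small or the leading monomials of distinct $f_j$ nearly cancel. Unlike in the L\^e--Spencer polynomial setting, pseudo-polynomials admit no algebraic normal form, so one must carefully track how the one-dimensional bound degrades with $\boldsymbol{\gamma}$, and invoke the $\Q$-linear independence hypothesis to quantify from below a suitable leading coefficient of $\sum_j\gamma_j f_j$ in terms of $\mathbf{m}$. Finally, the logarithmic losses arising from the Vaaler truncation of the floors and from the summation over $\mathbf{m}$ have to be absorbed into a slightly smaller power of $N$, which determines the final value of $\eta$ in terms of $\eta_0$ and $\ell$.
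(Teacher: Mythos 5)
Your reduction to exponential sums is structurally sound and, apart from the first step, parallels the paper: the paper also kills the floors with Vaaler-type polynomials (after a digit/interval decomposition) and then feeds genuine pseudo-polynomial sums $\sum_{n\leq N}e\bigl(\sum_j\beta_j f_j(n)\bigr)$ into the estimates of Sections \ref{sec:expon-sum-estim}--5. (For the detection step the paper does not use a box minorant but Schmidt's Theorem 14A on lattices, via the more general Theorem \ref{thm:multiple_lattice_approximation}; your Selberg/Vaaler-minorant variant would be an acceptable substitute for the Heilbronn statement itself.) The genuine gap is in the regime you yourself flag as ``the hard part''. The bounds of Propositions \ref{prop:largeintegers} and \ref{prop:mediumintegers} require the frequency to satisfy a lower bound of the shape $\lvert\beta\rvert\geq N^{c-\deg f}$; when $\beta$ is smaller than this there is no nontrivial bound at all (indeed the sum can be of size $N$). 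Your proposed fix --- using the $\Q$-linear independence of the $f_j$ to ``quantify from below a suitable leading coefficient of $\sum_j\gamma_jf_j$ in terms of $\mathbf m$'' --- cannot work, because the frequencies are $\beta_j=\sum_i m_ia_{ij}$ (plus integer shifts) and the matrix $A$ is an \emph{arbitrary} real matrix: already for $\ell=k=1$ and $a_{11}$ tiny, every relevant $\beta$ is tiny and no lower bound depending only on $\mathbf m$ and the $f_j$ exists. Uniformity in $A$, which you correctly want, is exactly what forbids such a bound; linear independence over $\Q$ says nothing about real linear combinations with small coefficients.

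The paper closes this case with two ideas absent from your proposal. First, if only some of the frequencies are small (after ordering the $f_j$ by degree), the corresponding terms $e(\beta_jf_j(n))$ are slowly varying and are peeled off by partial summation, so the exponential-sum estimates are applied only to the block of ``large'' frequencies. Second, if \emph{all} the relevant quantities $\lVert m\xi_j\rVert$ are small, no exponential-sum bound is attempted; instead one shows (again via Vaaler polynomials and Proposition \ref{prop:largeintegers}, now with frequencies $h/m$ which are not small) that some $n\leq X=M^{1/(1-\rho)}$ makes every $\lf f_j(n)\rf$ a multiple of $m$, and this single $n$ directly contradicts the assumed lower bound $\min_n\max_i\lVert\sum_j a_{ij}\lf f_j(n)\rf\rVert\geq N^{-\eta}$. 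Without these two mechanisms (or an equivalent), your contradiction argument collapses precisely on the frequency range where the Section \ref{sec:expon-sum-estim} estimates are silent, so the proposal as it stands does not yield the theorem.
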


Again we may consider sequences of integer parts of pseudo-polynomials
over the primes.

\begin{thm}\label{thm:multiple_prime_heilbronn_set}
  Let $\ell$ be a positive integer, $f_1,\ldots,f_k$ be $\Q$-linearly
  independent pseudo-polynomials, let $A=(a_{ij})$ be an arbitrary
  $\ell\times k$ matrix with real entries and let $N\in\N$.  Then
  there is an exponent $\eta>0$ depending only on $\ell$ and on the
  polynomials $f_i$ such that
  \[\min_{1\leq n\leq N}\max_{1\leq i\leq \ell}\lVert
  \sum_{j=1}^ka_{ij}\lf f_j(p)\rf \rVert \ll_{\ell,f_1,\ldots,f_k}N^{-\eta},\]
  where the bound is uniform in $A$.
\end{thm}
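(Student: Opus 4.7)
The plan is to adapt the strategy of Theorem~\ref{thm:multiple_heilbronn_set} to the prime setting, combining it with the exponential-sum techniques underlying Theorem~\ref{thm:single_prime_heilbronn_set}. Writing $\mathbf{F}(p) = (\lf f_1(p)\rf,\ldots,\lf f_k(p)\rf)$ and letting $A_i$ denote the $i$th row of $A$, the multidimensional Erd\H{o}s--Tur\'an--Koksma inequality (equivalently, the quantitative form of the Heilbronn characterisation in Theorem~\ref{thm:vdC_Heilbronn}) reduces the theorem to establishing cancellation of the form
\[\frac{1}{\pi(N)}\sum_{p\leq N}e\!\left(\sum_{i=1}^{\ell}h_i\,\langle A_i,\mathbf{F}(p)\rangle\right)\ll N^{-\eta_1}\]
uniformly for all non-zero integer vectors $\mathbf{h}$ with $\lVert\mathbf{h}\rVert_\infty\leq H=N^{\eta_0}$, for suitable $\eta_0,\eta_1>0$. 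A standard balancing of $\eta_0$ against $\eta_1$ then yields the exponent $\eta>0$ claimed in the theorem.

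The next step removes the floor functions via a truncated Vaaler approximation of $x\mapsto\{x\}-\tfrac12$ applied coordinate-wise to $f_j(p)$. This expands the phase into a sum, of length a small power of $N$, of terms $e(g(p))$ with
\[g(x)=\sum_{j=1}^{k}c_j f_j(x),\qquad c_j=\sum_{i=1}^{\ell}h_i a_{ij}+\frac{\nu_j}{M},\]
where $M$ is a truncation parameter and the $\nu_j$ are bounded integers. Because the $f_j$ are $\Q$-linearly independent pseudo-polynomials and $\mathbf{h}\neq\mathbf{0}$, the vector $(c_j)$ is non-zero outside a sparse exceptional set of indices $(\nu_j)$, and hence $g$ remains a pseudo-polynomial of degree $\theta_d=\max_j\deg f_j$ retaining a non-integral exponent.

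The central input is then the exponential-sum estimate
\[\sum_{p\leq N}e(g(p))\ll_{f_1,\ldots,f_k}N^{1-\eta'},\]
uniform in the coefficients of $g$, which is exactly the bound supplied by the machinery of Section~\ref{sec:expon-sum-estim} underlying Theorem~\ref{thm:single_prime_heilbronn_set}. That proof combines a Vaughan (or Heath-Brown) decomposition of the sum over primes into Type~I and Type~II bilinear sums with iterated Weyl--van der Corput differencing, the non-integral exponent of~$g$ ensuring that no Diophantine hypothesis on its leading coefficient is required. Substituting this estimate into the Vaaler expansion and summing over the expanded terms produces the required cancellation.

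The main obstacle will be keeping the saving $\eta'$ \emph{uniform in both the matrix~$A$ and the test vector~$\mathbf{h}$}, in the regime where either the Vaaler truncation error or the number of expanded terms threatens to overwhelm the saving. Handling these jointly amounts to a careful optimisation of $\eta_0$, $\eta_1$, $M$, and the number of Weyl differencing steps; the final numerical exponent is then governed by $\theta_d$ and by $\ell$. The $\Q$-linear independence hypothesis is used decisively here to rule out the degenerate possibility that the leading coefficient $\sum_j c_j\alpha_j^{(d)}$ of $g$ vanishes for a positive-density set of $\mathbf{h}$, which would otherwise force a loss comparable to the main term.
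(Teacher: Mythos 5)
Your proposal has a genuine gap, and it sits exactly where the real difficulty of the theorem lies: uniformity in $A$ when the resulting frequency coefficients are small. The paper does not argue via an Erd\H{o}s--Tur\'an--Koksma reduction; it deduces the theorem from the lattice statement of Theorem~\ref{thm:multiple_lattice_approximation}, whose proof starts from Schmidt's Lemma~\ref{schmidt:thm14A}: \emph{assuming} the conclusion fails, one obtains a single frequency vector $t\mathbf{p}$ for which the exponential sum over primes is large, and one then contradicts this. Your ETK route instead requires cancellation in $\sum_{p\leq N}e\bigl(\sum_i h_i\langle A_i,\mathbf{F}(p)\rangle\bigr)$ for \emph{every} non-zero $\mathbf{h}$ with $\lVert\mathbf{h}\rVert_\infty\leq N^{\eta_0}$, uniformly in $A$; this is simply false (take $A$ with rational entries of small denominator, so that the sequence is not equidistributed at all, even though the theorem's conclusion still holds for such $A$ via a divisibility phenomenon). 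The $\Q$-linear independence of the $f_j$ cannot repair this, because the degeneracy lives in $A$ and $\mathbf{h}$, not in the $f_j$.

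Relatedly, your claim that ``the non-integral exponent of $g$ ensures that no Diophantine hypothesis on its leading coefficient is required'' is not correct for the class of pseudo-polynomials considered here, and it hides the case analysis that occupies Sections~\ref{sec:expon-sum-estim}--\ref{sec:case-single-pseudo}. First, the dominant exponent of a pseudo-polynomial may be an integer ($k>\theta_r$), and then the prime estimate (Proposition~\ref{prop:mediumprimes}) genuinely needs the rational approximation of Lemma~\ref{lem:approximation} to the leading coefficient, valid only in the range $N^{3\rho-k}<\lvert\xi\rvert\leq N^{\rho-\theta_r}$. Second, even with a non-integral top exponent, Propositions~\ref{prop:largeprimes} and \ref{prop:mediumprimes} give savings only when the coefficient is not too small; when $\lVert m\xi_j\rVert$ drops below the admissible threshold there is no cancellation to be had, and the paper switches to a completely different argument: a partial-summation/recursive device (the exponents $\gamma_j$, with the $f_j$ ordered by degree) to strip off the coordinates with small coefficients, and, when all coefficients are small, a counting argument with Vaaler polynomials showing that some $n$ (resp.\ prime $p$) makes all $\lf f_j\rf$ simultaneously divisible by $m$, which contradicts the failure of the conclusion directly rather than through an exponential-sum bound. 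Your sketch collapses all of this into a single ``uniform in the coefficients'' estimate that does not exist; to make the proof work you need the contrapositive single-frequency reduction (Lemma~\ref{baker:thm2.2} or Lemma~\ref{schmidt:thm14A}) together with the three-regime treatment of the coefficient sizes.
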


\section{Every van der Corput set is also a Heilbronn set}\label{sec:every-van-der}

We first present the following easy proof\footnote{Personal
  communication by Imre Rusza} that any van der Croput set is a Heilbronn
set. Let $\mathcal{H}\subset\Z^k\setminus\{0\}$ be a van der Corput set. It is an
easy deduction that $\mathcal{H}$ must be intersective in the
multidimensional sense. Then for a given $\xi\in\R^k$ we consider the
set
\[A=\{\mathbf{h}\in\Z^k\colon
  \lVert\mathbf{h}\cdot\xi\rVert<\varepsilon/2\}.\] Since
$\mathcal{H}$ is intersective and $A$ has positive density we deduce
that
\[\mathcal{H}\cap(A-A)\neq\emptyset,\]
which proves Theorem \ref{thm:vdC_Heilbronn}.

A quantitative analysis of van der Corput and Heilbronn sets can be
provided using the parameter $\delta$ introduced in Section
\ref{sec:van-der-corput}. Let $N$ be a positive real. Then for a fixed
van der Corput set $\mathcal{H}$ we set
$\mathcal{H}_N:=\mathcal{H}\cap[-N,N]^d$. Then $\delta(\mathcal{H}_N)$
tends to $0$ as $N$ tends to infinity. Only few upper bounds for this
quantity are known. In particular, for the set of shifted primes
Slijep\v cevi\' c \cite{slijepcevic2013:van_der_corput} could show that
\[\delta(\{p-1\leq N\colon p\text{ prime}\})\ll (\log N)^{-1+o(1)}.\] Similar results are known for squares
\cite{slijepcevic2010:van_der_corput}. In case of Heilbronn sets one
has to consider the parameter $\gamma$ introduced in Section
\ref{sec:van-der-corput} instead of $\delta$.

The following proposition immediately yields a complete proof of
Theorem \ref{thm:vdC_Heilbronn}.
\begin{prop}\label{prop:vdC:Heilbronn}
  Let $\mathcal{H}\in\Z^k\setminus\{\mathbf{0}\}$. Then
  $\gamma(\mathcal{H})\leq\delta(\mathcal{H})$.
\end{prop}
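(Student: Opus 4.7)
The plan is to prove the inequality $\gamma(\mathcal{H}) \le \delta(\mathcal{H})$ by a duality argument. Fix $\xi \in \R^k$ and any $\gamma'$ with $0 < \gamma' \le \inf_{\mathbf{h} \in \mathcal{H}} \lVert \mathbf{h} \cdot \xi \rVert$; I will construct a Borel probability measure $\mu$ on the torus $\R^k/\Z^k$ such that $\hat\mu(\mathbf{h}) = 0$ for every $\mathbf{h} \in \mathcal{H}$ and $\mu(\{\mathbf{0}\}) \ge \gamma'$. Granting such a $\mu$, the proposition follows at once: for any $T = a_0 + \sum_{\mathbf{h} \in \mathcal{H}} a_\mathbf{h} \cos(2\pi \mathbf{h} \cdot \mathbf{x}) \in \mathcal{T}(\mathcal{H})$, the vanishing Fourier coefficients on $\mathcal{H}$ yield $\int T \, d\mu = a_0$, while $T \ge 0$ and $T(\mathbf{0}) = 1$ force $\int T \, d\mu \ge T(\mathbf{0}) \mu(\{\mathbf{0}\}) \ge \gamma'$. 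Hence $a_0 \ge \gamma'$, and taking infima over $T$ and then suprema over $\gamma'$ and $\xi$ gives $\delta(\mathcal{H}) \ge \gamma(\mathcal{H})$.

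For the construction I place $\mu$ on the orbit $\{n\xi \bmod \Z^k : n \in \Z\}$ and choose the weights by one-dimensional Fourier synthesis. Set
\[
\mu := \sum_{n \in \Z} c_n\, \delta_{n\xi},
\]
where $(c_n)_{n \in \Z}$ is the sequence of Fourier coefficients of the tent function $\phi(\alpha) := (1 - \lVert \alpha \rVert / \gamma')_{+}$ on $\R/\Z$ (note that $\gamma' \le 1/2$ automatically, so the tent fits). Writing $\phi$ as the self-convolution of the indicator $g := (\gamma')^{-1/2}\, \mathbf{1}_{[-\gamma'/2,\,\gamma'/2]}$, the Fourier coefficients factor as squares, $c_n = \hat g(n)^2 = \gamma'\, \mathrm{sinc}^2(n \gamma') \ge 0$ with $c_0 = \gamma'$, and $\sum_n c_n = \phi(0) = 1$. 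Hence $(c_n)$ is a genuine probability distribution and $\mu$ is a well-defined Borel probability measure on $\R^k/\Z^k$.

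The two required properties are then immediate. Computing directly, $\hat\mu(\mathbf{h}) = \sum_n c_n e^{-2\pi i n \mathbf{h} \cdot \xi} = \phi(\mathbf{h} \cdot \xi) = (1 - \lVert \mathbf{h} \cdot \xi \rVert / \gamma')_{+}$, which vanishes for every $\mathbf{h} \in \mathcal{H}$ because $\lVert \mathbf{h} \cdot \xi \rVert \ge \gamma'$ by the defining choice of $\gamma'$. For the mass at the origin, the $n = 0$ contribution alone gives $\mu(\{\mathbf{0}\}) \ge c_0 = \gamma'$. The main delicate point I would treat most carefully is the positivity and normalization of $(c_n)$: the representation $\phi = g * g$ is exactly what makes the coefficients manifestly nonnegative, and this Fej\'er-type identity is the load-bearing step on which the whole duality rests.
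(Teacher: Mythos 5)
Your proof is correct and rests on exactly the same mechanism as the paper's: the tent function $(1-\lVert\alpha\rVert/\gamma')_+$, written as a self-convolution so its Fourier coefficients are nonnegative, evaluated along the orbit $(n\xi)_{n\in\Z}$, combined with $T\geq0$ and $T(\mathbf{0})=1$. Your measure $\mu=\sum_n\widehat{\phi}(n)\,\delta_{n\xi}$ merely repackages the paper's quantity $g(\xi)=\sum_m\widehat{f}(m)T(m\xi)$ in dual (Ruzsa-style spectral) language, with the choice $\gamma'\leq\inf_{\mathbf{h}}\lVert\mathbf{h}\cdot\xi\rVert$ replacing the paper's contrapositive step; the computation is essentially identical.
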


\begin{proof}
  Our proof follows Montgomery's proof for $k=1$ in
  \cite{montgomery1994:ten_lectures_on}. Let
  $T\in\mathcal{T}(\mathcal{H})$ as above, let
  $0<\varepsilon\leq\frac12$ and set
  $f(x)=\max\left(0,1-\lVert x\rVert/\varepsilon\right)$.  Then we
  consider
  \[g(\xi):=a_0+\sum_{\mathbf{h}\in\mathcal{H}}a_{\mathbf{h}}f(\mathbf{h}
    \cdot \xi),\] where $\xi\in\R^k$. Since $f$ is continuous and of
  bounded variation, its Fourier transform converges absolutely to
  $f$. Thus
  \[g(\xi)=a_0+\sum_{\mathbf{h}\in\mathcal{H}}a_{\mathbf{h}}\sum_{m\in\Z} \widehat{f}(m) e(m\mathbf{h} \cdot \xi).\]
  The function $f$ is even. Hence its Fourier coefficients
  $\widehat{f}(m)$ are real. Moreover $g(\xi)$ is real, hence
  \[g(\xi)=a_0+\sum_{\mathbf{h}\in\mathcal{H}}a_{\mathbf{h}}\sum_{m\in\Z} \widehat{f}(m) \cos(2\pi m\mathbf{h} \cdot \xi).\] Inverting the order of summation yields
  \[g(\xi)=\sum_{m\in\Z}\widehat{f}(m)T(m\xi).\]
  A simple calculation shows that
  $\widehat{f}(m)=\frac1{\varepsilon}\left(\frac{\sin(\pi
      m\varepsilon)}{\pi m}\right)\geq0$
  and $T(m\theta)\geq0$ for all $m$. Thus $g(\xi)$ is greater than the
  contribution of the term for $m=0$ in the above sum. Since
  $\widehat{f}(0)=\varepsilon$ and $T(0)=1$ we get
  \[g(\xi)\geq\varepsilon.\] Now, if $\varepsilon>a_0$, then there
  must be at least one $\mathbf{h}\in\mathcal{H}$ such that
  $a_{\mathbf{h}}>0$ and $f(\mathbf{h} \cdot \xi)>0$. Hence
  $\lVert\mathbf{h} \cdot \xi\rVert<\varepsilon$. Since this holds for
  every $\varepsilon>a_0$ we obtain
  $\inf_{\mathbf{h}\in\mathcal{H}}\lVert \mathbf{h} \cdot
  \xi\rVert\leq a_0$.  Furthermore, since this holds for every
  polynomial $T\in\mathcal{T}(\mathcal{H})$, we get
  $\inf_{\mathbf{h}\in\mathcal{H}}\lVert\mathbf{h} \cdot
  \xi\rVert\leq\delta$. Finally, since this holds for every
  $\xi\in\R^k$, it follows that $\gamma\leq\delta$ which proves the
  proposition.
\end{proof}

\begin{rem}
  Applying the results of Slijep\v cevi\' c
  \cite{slijepcevic2013:van_der_corput,slijepcevic2010:van_der_corput}
  Proposition \ref{prop:vdC:Heilbronn} implies upper bounds for
  $\gamma$ in case of shifted primes and squares.
\end{rem}

\section{Exponential sum estimates for the case $\theta_r>k$}\label{sec:expon-sum-estim}

Before stating the proofs of the main theorems we collect some
well-known facts on exponential sums which will occur in the
sequel. Let $f$ be a pseudo-polynomial. Then there exists a real
function $g$ and a polynomial $P$ such that $f(x)=g(x)+P(x)$,
$g(x)=\sum_{j=1}^r\alpha_jx^{\theta_j}$ with
$1<\theta_1<\ldots<\theta_r$ and $\theta_j\not\in\Z$ for
$j=1,\ldots,r$. Let $k$ be the degree of $P$, and we set $k=0$ if
$P\equiv0$. By abuse of notation we write
\[\deg f=\begin{cases}
    \theta_r &\text{if }\theta_r>k\\
    k &\text{otherwise}.\end{cases}\]

We only consider the one dimensional case, since the multidimensional
case is similar. The proof is by supposing that
$\lVert \xi \lfloor f(n) \rfloor\rVert>M^{-1}$
for every $1\leq n\leq N$. Then using Vinogradov's method we
approximate the indicator function and by our assumption this yields a
lower bound for an exponential sum of the form
\[\sum_{1\leq n\leq N}e\left(\beta f(n)\right),\]
where $\beta>0$ is some real number.

The aim of this section is to provide upper bounds for this
exponential sum that subsequentially violates the lower bounds
yielding a contradiction. In the proof we use different arguments but
essentially the same exponential sum estimate if $\beta$ is large or
$\beta$ is small respectively, where these sizes have to be understood
modulo $1$. In both cases we use Weyl differencing which is
equivalent to derivation. If $g$ is the dominant part of $f$, this
means that $\deg f\not\in\Z$, we may differentiate as often as we wish
till the resulting function has the desired behavior. On the other
hand if the polynomial part $P$ is dominat we cannot differentiation that
freely since after $k$ steps we lose the polynomial and therefore the
dominant part. Therefore we have to further consider two cases for 
$k>\theta_r$ or not.

Let $\rho$ be a real satifying
\begin{gather}\label{rho}
0<\rho<\frac1{\lf\deg f\rf+3}.
\end{gather}
Then we distinguish the
following cases:
\begin{itemize}
\item If $\deg f=\theta_r$ is not an integer ($\theta_r>k$), then we
  only have
  \[N^{\rho-\theta_r}<\lvert\beta\rvert\leq N^{1/10}.\]
\item If $\deg f=k$ is an integer ($k>\theta_r$), then in the
  following section we distinguish the cases
\[N^{\rho-k}<\lvert\xi\rvert\leq N^{\rho-\theta_r},\quad
N^{\rho-\theta_r}<\lvert\xi\rvert\leq N^{1/10}.\]
\end{itemize}
Note that in the case of
\[0<\lvert \beta\rvert<N^{\rho-\deg f}\] we apply a differnt argument
that allows us to reuse the estimates for bigger $\beta$. Furthermore
we note that the exponent $\frac1{10}$ is an artifact of Lemma 2.3 of
\cite{bergelson_kolesnik_madritsch+2014:uniform_distribution_prime}
which we use in the proof.

If $\theta_r>k$ we may apply Weyl-differencing sufficiently often till
the sum does not rotate to much.
\begin{lem}{\cite[Lemma
    2.5]{bergelson_kolesnik_madritsch+2014:uniform_distribution_prime}}
\label{bkmst:lem2.5}
Let $X,k,q\in \mathbb{N}$ with $k,q\geq 0$ and set $K=2^k$ and $Q=
2^q$. Let $P(x)$ be a polynomial of degree $k$ with real
coefficients. Let $g(x)$ be a real $(q+k+2)$ times continuously
differentiable function on $[X , 2X]$ such that $\left| g^{(r)}(x)
\right| \asymp G X^{-r}$ $( r = 1, \dots, q+k+2) $.  Then, if $G =
o(X^{q+2})$ for $G$ and $X$ large enough, we have
\[\sum_{X < n \leq 2X} e(g(n) + P(n))\ll X^{1 -
  \frac{1}{K}} + X \left( \frac{\log^k X}{G} \right)^{\frac{1}{K}} + X
\left( \frac{G}{X^{q+2}} \right)^{\frac{1}{(4KQ-2K)}},\]
where $A\asymp B$ means that $A$ is of the same order as $B$,
\textit{i.e.} $A\ll B$ and $B\ll A$.
\end{lem}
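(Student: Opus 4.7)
The plan is to prove the bound by combining Weyl's differencing $k$ times (to strip off the polynomial $P$ of degree $k$) with the van der Corput $B$-process iterated $q$ times (to exploit the smoothness of $g$), and then optimise. The factors $K=2^{k}$ and $Q=2^{q}$ in the final exponents reflect these two rounds of squaring-and-Cauchy.

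First I would apply $k$ successive applications of the Weyl/van der Corput $A$-process (the inequality $|S|^{2}\le X\sum_{|h|<X}|S_{h}|$ applied recursively) to the sum $S=\sum_{X<n\le 2X}e(g(n)+P(n))$. After the $k$-th step we obtain
\[|S|^{K}\ll X^{K-1}+X^{K-k-1}\!\!\sum_{|h_{1}|,\dots,|h_{k}|<X}\Bigl|\sum_{n\in I(\mathbf h)}e\!\bigl(\Delta_{h_{1}}\cdots\Delta_{h_{k}}g(n)\bigr)\Bigr|,\]
where the polynomial $P$ has been annihilated (its $k$-th difference is a constant that is absorbed into a multiplicative factor of modulus $1$), and $I(\mathbf h)$ is a sub-interval of $[X,2X]$. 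The trivial diagonal contribution $h_{1}\cdots h_{k}=0$ yields the term $X^{1-1/K}$ after taking $K$-th roots.

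Second, on the off-diagonal range $h_{1}\cdots h_{k}\ne 0$ the phase $\phi_{\mathbf h}(n):=\Delta_{h_{1}}\cdots\Delta_{h_{k}}g(n)$ is an iterated divided difference, so Taylor's theorem and the hypothesis $|g^{(r)}(x)|\asymp GX^{-r}$ for $r\le q+k+2$ give
\[|\phi_{\mathbf h}^{(s)}(n)|\asymp |h_{1}\cdots h_{k}|\,G\,X^{-k-s},\qquad s=1,\dots,q+2.\]
To this inner sum I would apply the van der Corput exponent pair obtained by the $B^{q}$-process, which for an amplitude $F_{\mathbf h}:=|h_{1}\cdots h_{k}|GX^{-k}$ in the first derivative yields
\[\sum_{n\in I(\mathbf h)}e(\phi_{\mathbf h}(n))\ll X\Bigl(\frac{F_{\mathbf h}}{X^{q+1}}\Bigr)^{1/(4Q-2)}+X\,F_{\mathbf h}^{-1/(2Q-2)}\]
on the range where $F_{\mathbf h}$ is neither too small nor too large; the borderline $F_{\mathbf h}\lesssim 1$ produces, after summing $\log^{k}X$ over dyadic boxes in $\mathbf h$, the middle term $X(\log^{k}X/G)^{1/K}$ in the final bound.

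Third, I would substitute this back, sum over $\mathbf h$ (pulling out $|h_{1}\cdots h_{k}|^{1/(4Q-2)}$ and summing to $X^{k}$ with a gain coming from the extra power $X^{-k-1}$ in front), and take the $K$-th root. The dominant off-diagonal contribution becomes $X(G/X^{q+2})^{1/(4KQ-2K)}$, producing the third term. Optimising by balancing the three contributions, and checking that the condition $G=o(X^{q+2})$ keeps us in the regime where the $B$-process is admissible, completes the argument.

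The principal technical obstacle is the fourth and final step: keeping precise track of the $K$-th root of a sum over $\mathbf h$ so that the $h$-dependence in $F_{\mathbf h}$ re-emerges with the correct exponent $1/(4KQ-2K)$ rather than something weaker. The bookkeeping has to handle simultaneously the dyadic decomposition in each $h_{i}$, the restriction to the range where the $B^{q}$-process is valid (i.e.\ where $F_{\mathbf h}\asymp |h_{1}\cdots h_{k}|GX^{-k}$ lies in the admissible window), and the absorbtion of the logarithmic losses into the $\log^{k}X$ factor in the middle term.
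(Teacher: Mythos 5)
Two remarks before the substance: this lemma is not proved in the paper at all --- it is quoted verbatim from Lemma~2.5 of Bergelson--Kolesnik--Madritsch--Son--Tichy, so there is no in-paper argument to compare with. Your strategy is indeed the standard one behind that cited result: $k$-fold Weyl differencing to annihilate $P$ (Nathanson's Lemma 4.13, as used elsewhere in this paper), the estimate $\lvert\Delta_{h_k,\ldots,h_1}g^{(s)}\rvert\asymp\lvert h_1\cdots h_k\rvert G X^{-k-s}$ via the integral representation of iterated differences, a $(q+2)$-nd derivative test on the inner sums, summation over the shifts, and a $K$-th root. The diagonal giving $X^{1-1/K}$ and the identification $2^{q+2}-2=4Q-2$ are correct.

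However, the two displayed intermediate bounds you invoke do not deliver the stated estimate, and this is exactly where your own ``principal technical obstacle'' bites. (i) With your definition $F_{\mathbf h}=\lvert h_1\cdots h_k\rvert G X^{-k}$ one has $\lvert\phi_{\mathbf h}^{(q+2)}\rvert\asymp F_{\mathbf h}X^{-q-2}$, so the van der Corput $(q+2)$-nd derivative test gives $\sum_{n\in I(\mathbf h)}e(\phi_{\mathbf h}(n))\ll X\bigl(F_{\mathbf h}X^{-q-2}\bigr)^{1/(4Q-2)}+X^{1-1/Q}\bigl(F_{\mathbf h}X^{-q-2}\bigr)^{-1/(4Q-2)}$; your first term has $X^{q+1}$ where $X^{q+2}$ is needed, and carrying your version through the summation over $\mathbf h$ yields $X\bigl(G/X^{q+1}\bigr)^{1/(4KQ-2K)}$, not the third term of the lemma. (ii) More seriously, the middle term $X(\log^kX/G)^{1/K}$ cannot arise from your second term $XF_{\mathbf h}^{-1/(2Q-2)}$: summing that over $\mathbf h$ and taking the $K$-th root gives a saving of only $G^{-1/(2KQ-2K)}$, far weaker than $G^{-1/K}$. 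The middle term comes instead from the first-derivative (Kusmin--Landau) bound on the differenced sums, $\sum_{n\in I(\mathbf h)}e(\phi_{\mathbf h}(n))\ll 1/\lvert\phi_{\mathbf h}'\rvert\asymp X^{k+1}/(\lvert h_1\cdots h_k\rvert G)$ --- legitimate because $g^{(k+2)}$ has constant sign, so $\phi_{\mathbf h}'$ is monotone --- whose harmonic sum over $1\leq\lvert h_i\rvert<X$ produces the factor $(\log X)^k$ and the full $1/G$ before the $K$-th root is taken; for each $\mathbf h$ one uses the minimum of this and the derivative-test bound, and the hypothesis $G=o(X^{q+2})$ ensures $\lvert\phi_{\mathbf h}^{(q+2)}\rvert=o(1)$ so the derivative test is effective. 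With these two corrections your outline closes and yields exactly the three stated terms.
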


\begin{prop}\label{prop:largeintegers}
  Let $P(x)$ be a polynomial of degree $k$ and
  $g(x) = \sum_{j=1}^r d_j x^{\theta_j}$ with $r \geq 1$, $d_r \ne 0$,
  $d_j$ real, $0 < \theta_1 < \theta_2 < \cdots < \theta_r$ and
  $\theta_j \notin \N$. Assume that $\ell <\theta_r<\ell +1$ for some
  integer $\ell$. Let 
  $N^{\rho-\theta_r} \leq |\xi| \leq N^{\frac1{10}}$. Then for
  arbitrary $\varepsilon>0$
  \[\left| \sum_{n \leq N} e(\xi g(n) + \xi P(n))\right|
    \ll N^{1-\frac{1}{(8KL-4K)}},\]
  where $K=2^k$ and $L=2^\ell$.
\end{prop}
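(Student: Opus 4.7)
The plan is to decompose the sum dyadically and apply the van der Corput-type estimate of Lemma \ref{bkmst:lem2.5} to each dyadic block, with the smooth part $\xi g(n)$ playing the role of the analytic function and $\xi P(n)$ the polynomial piece of degree $k$. First I would write
\[\sum_{n\leq N}e(\xi g(n)+\xi P(n))=O(N^{1/10})+\sum_{X}\sum_{X<n\leq 2X}e(\xi g(n)+\xi P(n)),\]
where the outer sum runs over dyadic values $N^{1/10}\leq X\leq N$ and the $O(N^{1/10})$ term absorbs the trivial contribution of the initial range.

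On each block $(X,2X]$ I would verify the asymptotic
\[\left|(\xi g)^{(r)}(x)\right|\asymp GX^{-r}\qquad(r=1,\ldots,q+k+2),\qquad G=|\xi|X^{\theta_r},\]
which is immediate from $d_r\neq 0$, $\theta_r\notin\N$, and $\theta_r>\theta_j$ for $j<r$, since the leading term of $g^{(r)}$ dominates at scale $X$. I would then set $q=\ell$, so that $Q=2^{\ell}=L$ and $q+2=\ell+2>\theta_r$; the growth condition $G=o(X^{q+2})$ then reads $|\xi|=o(X^{\ell+2-\theta_r})$, and is satisfied uniformly for $X\geq N^{1/10}$ because $|\xi|\leq N^{1/10}$ and $\ell+2-\theta_r>1$. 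Lemma \ref{bkmst:lem2.5} therefore yields
\[\sum_{X<n\leq 2X}e(\xi g(n)+\xi P(n))\ll X^{1-\tfrac{1}{K}}+X\left(\frac{\log^k X}{|\xi|X^{\theta_r}}\right)^{\tfrac{1}{K}}+X\left(|\xi|X^{\theta_r-\ell-2}\right)^{\tfrac{1}{4KL-2K}}.\]

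Finally I would estimate the three terms at the maximal scale $X\asymp N$. The first term is harmless since $1/K\geq 1/(8KL-4K)$. For the second term the lower bound $|\xi|\geq N^{\rho-\theta_r}$ gives $|\xi|X^{\theta_r}\gg N^{\rho}$, so the contribution is at most $N^{1-\rho/K}(\log N)^{k/K}$, which lies below the target once $\rho$ is taken sufficiently close to the upper end of the range \eqref{rho}. The decisive third term, using $|\xi|\leq N^{1/10}$ and $\theta_r-\ell-2<-1$, is bounded by
\[N\cdot N^{(1/10+\theta_r-\ell-2)/(4KL-2K)}\ll N^{1-9/(10(4KL-2K))}\ll N^{1-1/(8KL-4K)},\]
and summing the $O(\log N)$ dyadic contributions preserves the shape of the bound. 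The main obstacle I anticipate is the uniform verification of the derivative asymptotic together with the condition $G=o(X^{q+2})$ across all dyadic scales simultaneously; once $q=\ell$ is fixed and the initial trivial range is carved off, the remainder reduces to arithmetic bookkeeping on the exponents.
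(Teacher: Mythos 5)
Your proposal is correct and follows essentially the same route as the paper: a dyadic decomposition followed by an application of Lemma \ref{bkmst:lem2.5} with $q=\ell$ and $G=|\xi|X^{\theta_r}$, then combining the three resulting terms using the bounds $N^{\rho-\theta_r}\leq|\xi|\leq N^{1/10}$. Your extra care in carving off the initial range $X<N^{1/10}$ to secure the hypothesis $G=o(X^{q+2})$, and your explicit remark that the middle term requires $\rho$ near the top of the range \eqref{rho}, only make explicit points the paper's proof passes over silently.
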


\begin{proof}
We split the sum into $\leq\log N$ sub-sums of the form
\[\sum_{X\leq n\leq 2X}e(\xi g(n)+\xi P(n)),\]
and denote by $S$ a typical one of them. Because of the polynomial
structure of $g$ and since $\theta_r\not\in\Z$ we
get for $j\geq1$ that
\[\lvert g^{(j)}(x)\rvert\asymp X^{\theta_r-j}.\] Thus an application
of Lemma \ref{bkmst:lem2.5} with $q=\ell$ yields
\[
  S\ll X^{1 - \frac{1}{K}}
    + X \left( \frac{\log^k X}{\lvert \xi\rvert X^{\theta_r}} \right)^{\frac{1}{K}}
    + X \left( \frac{\lvert \xi\rvert X^{\theta_r}}{X^{q+2}} \right)^{\frac{1}{(4KQ-2K)}}.
\]
Summing over all sub-sums and using the above bounds on $\lvert
\xi\rvert$ we get
\begin{align*}
\sum_{n\leq N}e(\xi g(n)+\xi P(n))
\ll N \left(\lvert \xi\rvert N^{\theta_r}\right)^{-\frac{1}{K}}
    + N^{1-\frac{1}{(8KL-4K)}}
\ll N^{1-\frac{1}{(8KL-4K)}},
\end{align*}
which yields the desired bound.
\end{proof}

Before we will prove the corresponding estimate for sums over primes we need
some standard tools. The first one is the von Mangoldt's function defined by
\[
\Lambda(n)=\begin{cases}
\log p,&\text{if $n=p^k$ for some prime $p$ and an integer $k\geq1$;}\\
0,&\text{otherwise}.
\end{cases}
\]
\begin{lem}[{\cite[Lemma 11]{mauduit_rivat2010:sur_un_probleme}}]
\label{mr:lem11}
Let $g$ be a function such that $\lvert g(n)\rvert\leq 1$ for all
integers $n$. Then
\[
\lvert\sum_{p\leq P}g(p)\rvert\ll\frac1{\log P}\max_{t\leq P}
\lvert\sum_{n\leq t}\Lambda(n)g(n)\rvert+\sqrt{P}.
\]
\end{lem}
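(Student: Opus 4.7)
The plan is to reduce the prime sum $\sum_{p\leq P}g(p)$ to the von Mangoldt weighted sum $A(t)=\sum_{n\leq t}\Lambda(n)g(n)$ by two standard devices: first, stripping off the contribution of proper prime powers, which is controlled by Chebyshev's estimate; second, removing the weight $\log p$ by Abel summation against $1/\log t$.

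First I would introduce the auxiliary partial sum $B(t)=\sum_{p\leq t}g(p)\log p$. Since $\Lambda$ is supported exactly on prime powers, the difference is
\[A(t)-B(t)=\sum_{k\geq 2}\sum_{p\leq t^{1/k}}g(p^k)\log p,\]
and using $|g|\leq 1$ together with Chebyshev's bound $\vartheta(x)\ll x$ this is $O(\sqrt{t})$ (the $k=2$ term dominates, and $\sum_{k\geq 3}\vartheta(t^{1/k})\ll t^{1/3}\log t$). In particular $|B(t)|\leq\max_{s\leq P}|A(s)|+O(\sqrt{t})$ for every $t\leq P$.

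Next I would apply Abel summation against the smooth weight $1/\log t$:
\[\sum_{p\leq P}g(p)=\sum_{p\leq P}\frac{g(p)\log p}{\log p}=\frac{B(P)}{\log P}+\int_2^P\frac{B(t)}{t\log^2 t}\,dt.\]
The boundary term contributes at most $\max_{t\leq P}|A(t)|/\log P+O(\sqrt{P}/\log P)$, which already fits inside the target bound.

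The delicate point is the integral. A global application of $|B(t)|\leq\max|A|+O(\sqrt{t})$ gives a factor $\int_2^P dt/(t\log^2 t)=O(1)$ and therefore loses the crucial $1/\log P$ in front of $\max|A|$. To recover it I would split the range of integration at $t_0=\sqrt{P}$. On $[2,\sqrt{P}]$ I use the trivial estimate $|B(t)|\leq\vartheta(t)\ll t$, producing a contribution $\ll\int_2^{\sqrt{P}}dt/\log^2 t\ll\sqrt{P}$. On $[\sqrt{P},P]$ I use the maximal bound and the crucial identity $\int_{\sqrt{P}}^P dt/(t\log^2 t)=1/\log\sqrt{P}-1/\log P=1/\log P$, which produces exactly the factor $1/\log P$ in front of $\max_{t\leq P}|A(t)|$; the Chebyshev remainder $O(\sqrt{t})$ integrates to a further $O(\sqrt{P})$. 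Assembling the three contributions yields the claimed inequality. The only genuine obstacle is this splitting argument, which is what forces the $\sqrt{P}$ additive error; everything else is routine bookkeeping.
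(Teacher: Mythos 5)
Your argument is correct: removing the proper prime powers via Chebyshev ($\psi(t)-\vartheta(t)\ll\sqrt{t}$), then applying Abel summation against $1/\log t$ and splitting the integral at $\sqrt{P}$ so that $\int_{\sqrt{P}}^{P}\frac{dt}{t\log^2 t}=\frac{1}{\log P}$ delivers exactly the stated bound. The paper itself gives no proof — it simply quotes the lemma from Mauduit and Rivat — and your argument is essentially the standard proof of that cited result, so there is nothing further to compare.
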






Next we introduce the $s$-fold divisor sum $d_s(n)$, 
\textit{i.e.}
\[d_s(n)=\sum_{x_1\cdots x_s=n}1.\]

The central tool in the treatment of the exponential sum over primes
is Vaughan's identity which we use to subdivide the weighted
exponential sum into several sums of Type I and II. 
\begin{lem}[{\cite[Lemma
    2.3]{bergelson_kolesnik_madritsch+2014:uniform_distribution_prime}}]
\label{bkmst:lem23}
Assume $F(x)$ to be any function defined on the real line, supported
on $[X, 2X]$ and bounded by $F_0$. Let further $U,V,Z$ be any
parameters satisfying $3 \leq U < V < Z < 2X$, $Z \geq 4U^2$,
$X \geq 32 Z^2 U$, $V^3 \geq 32 X$ and $Z-\frac12\in\mathbb{N}$. Then
$$\left| \sum_{X< n\leq 2X} \Lambda(n) F(n)  \right| \ll K
\log P + F_0 +  L (\log X)^8 ,$$
where $K$ and $L$ are defined by
\begin{align*}
K&=\max_M\sum_{m=1}^\infty d_3(m)\left\vert\sum\limits_{Z<n\leq M}
   F(mn)\right\vert& &\textnormal{(Type I)},\\
L&=\sup\sum_{m=1}^\infty d_4(m)\left\vert\sum\limits_{U < n < V} b(n)
   F(mn)\right\vert& &\textnormal{(Type II)},
\end{align*}
where the supremum is taken over all arithmetic functions $b(n)$ satisfying $|b(n)| \leq d_3(n).$
\end{lem}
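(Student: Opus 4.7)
The plan is to apply Vaughan's identity to the von Mangoldt function throughout the support of $F$, and then to regroup the resulting convolutions into sums of Type I and Type II via a dyadic splitting of the variables. Since $X \geq 32 Z^2 U \geq 32 U$, the identity
\[
\Lambda(n) = \sum_{\substack{bc=n\\ b\leq U}}\mu(b)\log c - \sum_{\substack{bcd=n\\ b\leq U,\, c\leq V}}\mu(b)\Lambda(c) + \sum_{\substack{bc=n\\ b>U,\, c>V}}\mu(b)\Lambda(c)
\]
is valid for every $n$ with $F(n)\neq 0$. Substituting and exchanging the order of summation produces three sums $S_1, S_2, S_3$ that must be bounded individually.

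For $S_1$, I would fix $b \leq U$ and eliminate the $\log c$-weight by partial summation, which leaves sums of the form $\sum_{X/b < c \leq M} F(bc)$. The hypotheses $Z \geq 4U^2$ and $X \geq 32 Z^2 U$ force $X/b \geq Z$, so the natural lower end $X/b$ lies to the right of $Z$ and the sum matches the Type I template; the trivial coefficient $|\mu(b)| \leq 1 \leq d_3(b)$ bounds it by $K \log P$. For $S_2$, I would group $m = bc$ with $m \leq UV$; the associated weight $\sum_{bc=m,\, b\leq U,\, c\leq V}\mu(b)\Lambda(c)$ is dominated by $\log m \ll d_3(m)$, and partial summation once again yields Type I sums, so both pieces are absorbed into $K\log P$.

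The main obstacle is the bilinear remainder $S_3$, where $b > U$ and $c > V$. I would split dyadically, $b \in (B, 2B]$ and $c \in (C, 2C]$ with $U < B$, $V < C$ and $BC \asymp X$, giving $O(\log^2 X)$ blocks. The constraints $V^3 \geq 32 X$ and $X \geq 32 Z^2 U$ are precisely tuned so that, in every such block, at least one of $B, C$ can be placed into the window $(U,V)$: if $B \geq V$ then $C \leq 2X/B \leq 2X/V \leq V^2/16$, and a secondary dyadic subdivision of the larger variable together with a repackaging of the Dirichlet convolution of $\mu$ and $\Lambda$ (whose coefficients are controlled by $d_3$ or $d_4$) forces a sub-variable into $(U,V)$. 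Relabelling the resulting short variable as $n$, the block becomes a Type II sum with outer weight bounded by $d_3$; summing over all $O(\log^2 X)$ blocks and applying the standard divisor-sum estimates $\sum_{m \leq X} d_s(m) \ll X (\log X)^{s-1}$ together with the partial-summation losses yields the $L(\log X)^8$ contribution, while the stray $F_0$ term appears as an unavoidable remnant from the boundary pieces of the dyadic partition and from the ranges where Vaughan's identity is not directly invoked. The most delicate step --- and the main technical work --- is precisely the verification that the four numerical hypotheses on $U, V, Z, X$ do permit this combinatorial rearrangement across all dyadic blocks, and that the accumulated convolutions are indeed dominated by $d_3$ on the Type I side and by $d_4$ on the Type II side.
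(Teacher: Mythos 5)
You should first note that the paper contains no proof of this statement at all: it is quoted verbatim as Lemma 2.3 of Bergelson--Kolesnik--Madritsch--Son--Tichy, a Vaughan-type decomposition in the tradition of Mauduit and Rivat, so the only meaningful comparison is with that source, and in outline your strategy (Vaughan's identity, then regrouping into Type I and Type II ranges) is indeed the strategy used there.

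As written, however, your sketch has genuine gaps. First, the displayed ``Vaughan identity'' is not an identity: the bilinear term is the triple convolution $\sum_{bcd=n,\,b>U,\,c>V}\mu(b)\Lambda(c)$, i.e.\ $\mu_{>U}*\Lambda_{>V}*1$ (valid for $n>V$), not $\sum_{bc=n,\,b>U,\,c>V}\mu(b)\Lambda(c)$; the missing unweighted divisor variable is precisely the object that must later be regrouped, so this is not a cosmetic slip. Second, and more seriously, the heart of the lemma --- that every remaining range can be rewritten as a Type II sum whose \emph{inner} variable is confined to the narrow window $(U,V)$, with outer weight bounded by $d_4$ and inner weight bounded by $d_3$ --- is asserted rather than proved. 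The inequality you offer (``if $B\ge V$ then $C\le 2X/B\le 2X/V\le V^2/16$'') does not place either variable in $(U,V)$, and you do not treat the regime in which the divisor-bounded variable drops below $U$, where the $\Lambda$-weighted variable is long and a further decomposition or regrouping is unavoidable; it is exactly this bookkeeping that produces the weights $d_3$ and $d_4$ (a single naive application of Vaughan only yields $\log$- and $d_2$-type weights), the factor $(\log X)^8$, and the specific hypotheses $Z\ge 4U^2$, $X\ge 32Z^2U$, $V^3\ge 32X$ and $Z-\tfrac12\in\N$. Since you yourself flag this verification as ``the main technical work'' and leave it undone, the proposal is an outline of the standard strategy rather than a proof of the stated lemma.
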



Using these tools we obtain a similar estimate for the sum over primes.
\begin{prop}\label{prop:largeprimes}
  Let $P(x)$ be a polynomial of degree $k$ and
  $g(x) = \sum_{j=1}^r d_j x^{\theta_j}$ with $r \geq 1$, $d_r \ne 0$,
  $d_j$ real, $0 < \theta_1 < \theta_2 < \cdots < \theta_r$ and
  $\theta_j \notin \N$. Assume that $\ell <\theta_r<\ell +1$ for some
  integer $\ell$. Let 
  $N^{\rho-\theta_r} \leq |\xi| \leq N^{\frac1{10}}$. Then for
  arbitrary $\varepsilon>0$
  \[\left| \sum_{p \leq N} e(\xi g(p) + \xi P(p))\right| \ll
    N^{1-\rho+\varepsilon}+N^{1-\frac{2}{3K}+\varepsilon}
    +N^{1-\frac{\rho}{K}+\varepsilon}
    +N^{1-\frac{1}{64KL^5-4K}+\varepsilon},\]
  where $K=2^k$ and $L=2^\ell$.
\end{prop}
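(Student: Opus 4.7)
The plan is to reduce the sum over primes to a sum weighted by the von Mangoldt function via Lemma \ref{mr:lem11}, decompose the latter using Vaughan's identity (Lemma \ref{bkmst:lem23}), and then bound the resulting Type I and Type II sums by the derivative-based estimate of Lemma \ref{bkmst:lem2.5}, following the same philosophy as in Proposition \ref{prop:largeintegers}. After a dyadic decomposition, it suffices to bound $\sum_{X<n\leq 2X}\Lambda(n)F(n)$ for each $X\leq N$, where $F(n)=e(\xi g(n)+\xi P(n))$ and $F_0=1$; the outer $O(\log N)$ dyadic levels and the $O(\sqrt{N})$ term from Lemma \ref{mr:lem11} contribute only admissible error.

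For the Type I quantity $K$ in Lemma \ref{bkmst:lem23}, fix $m\leq X/Z$ and consider $\sum_{Z<n\leq M}e(\xi g(mn)+\xi P(mn))$. The rescaled phase $g_m(x):=g(mx)$ satisfies $|g_m^{(j)}(x)|\asymp m^{\theta_r}x^{\theta_r-j}$, so Lemma \ref{bkmst:lem2.5} with $q=\ell$ applies on $[Z,M]$ with $G\asymp|\xi|(mM)^{\theta_r}$. Summing the resulting bound against $d_3(m)$ over $m\leq X/Z$ costs only $(\log N)^{O(1)}$ and produces error terms of the shape $N^{1-2/(3K)+\varepsilon}$ and $N^{1-\rho/K+\varepsilon}$, the latter arising from the lower bound $|\xi|\geq N^{\rho-\theta_r}$.

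For the Type II quantity $L$, apply Cauchy--Schwarz in the outer $m$-sum to eliminate the divisor-bounded weight $b(n)$, producing a sum in $(m,n,n')$ whose phase is $\xi(g(mn)-g(mn'))+\xi(P(mn)-P(mn'))$. The diagonal $n=n'$ is absorbed into the trivial range and yields the term $N^{1-\rho+\varepsilon}$ once $U$ and $V$ are chosen of the appropriate size; for each non-zero shift $h=n-n'$ the inner sum in $n'$ again falls under Lemma \ref{bkmst:lem2.5}, now with highest non-integral derivative of order $\theta_r-1$ and an amplified rescaling by $m$. Iterating Weyl differencing $\ell+k+2$ times, combined with the squaring from Cauchy--Schwarz, degrades the saving factor from $8KL$ of Proposition \ref{prop:largeintegers} to $64KL^5$, producing the term $N^{1-1/(64KL^5-4K)+\varepsilon}$.

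The last step is to choose the Vaughan parameters $U,V,Z$ subject to the admissibility constraints $3\leq U<V<Z<2X$, $Z\geq 4U^2$, $X\geq 32Z^2U$ and $V^3\geq 32X$, so that the four error terms balance. The main obstacle is the Type II estimate: after Cauchy--Schwarz the new phase depends non-trivially on $m$ through the composition $g(mx)$, and Lemma \ref{bkmst:lem2.5} has to be applied uniformly in $m$ over a polynomially long range while still retaining the correct derivative order $\theta_r$. Tracking this $m$-dependence through the iterated differencing is precisely what forces the factor $L^5$ in the denominator of the final exponent.
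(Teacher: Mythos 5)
Your overall strategy is the paper's: Lemma \ref{mr:lem11} to pass to a $\Lambda$-weighted sum, a dyadic decomposition, Vaughan's identity (Lemma \ref{bkmst:lem23}) with the Type I/Type II split, and Lemma \ref{bkmst:lem2.5} for the resulting exponential sums. However, the quantitative core of the proposition is exactly the exponents, and there your parameter choices do not work as stated. For the Type I sums you propose applying Lemma \ref{bkmst:lem2.5} with $q=\ell$; but after Vaughan the inner variable only ranges over an interval of length $Y=X/x\geq Z\asymp X^{2/5}$, while $G\asymp\lvert\xi\rvert X^{\theta_r}$ can be as large as $X^{\theta_r+\frac1{10}}$, so the hypothesis $G=o(Y^{q+2})$ fails for $q=\ell$ (already for $\ell\geq 2$). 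One is forced to difference much more often, roughly $q\approx 5\ell+3$, i.e.\ $Q=8L^5$, and it is precisely this step that produces the exponent $\frac1{64KL^5-4K}$. You instead attribute the $L^5$ to the Type II sums ``after Cauchy--Schwarz'', which is backwards: in the Type II sums the un-squared variable runs over a range of length $\geq X/V\asymp X^{2/3}$, so $Q=2L^2$ suffices there and the resulting exponent $\frac1{16KL^2-4K}$ is stronger and gets absorbed; moreover, after Cauchy--Schwarz the exponential sum to be estimated is over the long outer variable (your $m$), for fixed $n\neq n'$, not ``the inner sum in $n'$'', and the differenced phase still has magnitude governed by $\theta_r$ (scaled by $\lvert n-n'\rvert/n$), not order $\theta_r-1$. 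Your claimed derivation of the $64KL^5$ term from ``iterating Weyl differencing $\ell+k+2$ times'' is not substantiated by any computation and does not match how the ranges actually constrain the admissible $q$ in Lemma \ref{bkmst:lem2.5}.

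A second, smaller gap is the accounting of the term $N^{1-\rho+\varepsilon}$. It does not come from the diagonal of the Cauchy--Schwarz step (that diagonal contributes $\sqrt{X_1X}\ll X^{9/10+\varepsilon}$ with the paper's choices $U\asymp X^{1/5}$, $V\asymp X^{1/3}$, $Z\asymp X^{2/5}$); it comes from discarding, by the trivial bound, all dyadic blocks with $X\leq N^{1-\rho}$, a step you omit when you assert that the dyadic levels contribute only admissible error. Similarly, the term $N^{1-\frac{2}{3K}+\varepsilon}$ arises from the Type II sums (inner length $\geq X/V$), while Type I contributes $X^{1-\frac{2}{5K}}$, $X^{1-\frac{\rho}{K}}$ and the $L^5$ term; your attribution of $N^{1-\frac{2}{3K}}$ to Type I is another symptom of the swapped roles of the two ranges. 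To repair the argument you should fix the Vaughan parameters explicitly (as in the paper, $U=\tfrac14X^{1/5}$, $V=4X^{1/3}$, $Z\approx\tfrac14X^{2/5}$), verify the hypothesis $G=o(Y^{q+2})$ of Lemma \ref{bkmst:lem2.5} separately in each range with the correct $q$, and keep track of the discarded short dyadic ranges.
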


\begin{proof}
  We start with an application of Lemma \ref{mr:lem11} which
  transforms the sum over the primes into the weighted sum
  \[\left| \sum_{p \leq N} e(\xi g(p) + \xi P(p))\right|\ll\frac1{\log
    N}\max\lvert\sum_{n\leq
    N}\Lambda(n)e\left(\xi(g(n)+P(n))\right)\rvert+N^{\frac12}.\]
  Then we split the inner sum into $\leq \log N$ subsums of the form
  \[\lvert \sum\limits_{X< n \leq
    2X}\Lambda(n)e\left(\xi(g(n)+P(n))\right)\rvert\]
  with $2X \leq N$ and we denote by $S$ a typical one of them. We may assume
  that $X \geq N^{1-\rho}$.

  Applying Vaughan's identity (Lemma~\ref{bkmst:lem23}) with the parameters
  $U = \frac{1}{4} X^{1/5}$, $V= 4 X^{1/3}$ and $Z$ the unique number
  in $\frac12+\N$, which is closest to $\frac{1}{4} X^{2/5}$,
  yields
\begin{gather}\label{mani:S}
S \ll 1+(\log X)S_1+(\log X)^8S_2,
\end{gather}
where
\begin{align*}
S_1&=\sum_{x < \frac{2X}{Z}} d_3(x) \sum_{y > Z, \frac{X}{x} < y < \frac{2X}{x}} e\left(\xi(g(xy)+P(xy))\right)\\
S_2&=\sum_{\frac XV<x\leq\frac{2X}U} d_4(x) \sum_{U < y < V, \frac{X}{x} < y \leq \frac{2X}{x}} b(y) e\left(\xi(g(xy)+P(xy))\right).
\end{align*}

We start estimating $S_1$. Since $d_3(x)\ll
x^{\varepsilon}$ we have
\begin{align*}
\lvert S_1\rvert\ll X^\varepsilon\sum_{x\leq\frac{2X}Z}
  \lvert\sum_{\substack{\frac Xx<y\frac{2X}x\\y>Z}}e\left(\xi(g(xy)+P(xy))\right)\rvert.
\end{align*}
We fix $x$ and write $Y=\frac Xx$ for short. Since
$\theta_r\not\in\Z$, we obtain for $j\geq1$ that
\[\lvert\frac{\partial^j g(xy)}{\partial y^j}\rvert
\asymp X^{\theta_r}Y^{-j}.\]

For $j\geq5(\ell+1)$ we get
\[\xi X^{\theta_r}Y^{-j}\leq N^{\frac1{10}} X^{\theta_r-\frac25j}\ll X^{-\frac12}.\] 

Thus an application of Proposition \ref{bkmst:lem2.5} yields the following
estimate:
\begin{equation}\label{mani:estim:S1}
\begin{split}
\lvert S_1\rvert &\ll X^{\varepsilon}\sum_{x \leq 2X/Z} Y \left[
  Y^{-\frac{1}{K}} + (\log Y)^k\left(\lvert\xi\rvert X^{\theta}\right)^{-\frac1K} + X^{-\frac{1}{2} \frac{1}{4K \cdot 8L^5 - 2K}} \right] \\
&\ll X^{1+\varepsilon}(\log X)\left[X^{-\frac2{5K}}+X^{-\frac{\rho}{K}}
  + X^{-\frac{1}{64KL^5-4K} } \right],
\end{split}\end{equation}
where we have used that $\frac kK<1$ and
$\rho(\theta_r+1)<\rho(\lf\deg f\rf+2)<1$ by (\ref{rho}).

Now we turn our attention to the second sum $S_2$. We split the range
$( \frac{X}{V} , \frac{2X}{U} ]$ into $\leq \log X$ subintervals of
the form $(X_1, 2X_1]$. Thus
\begin{align*}
\lvert S_2\rvert
&\leq (\log X)X^{\varepsilon}\sum_{X_1<x\leq
  2X_1}\lvert\sum_{\substack{U<y<V\\\frac
    Xx<y\leq\frac{2X}x}}b(y)e\left(\xi(g(xy)+P(xy))\right)\rvert.
\end{align*}

An application of Cauchy's inequality together with the estimate $\lvert b(y)\rvert\ll
X^\varepsilon$ yields
\begin{align*}
\lvert S_2\rvert^2
&\leq (\log X)^2X^{2\varepsilon}X_1\sum_{X_1<x\leq
  2X_1}\lvert\sum_{\substack{U<y<V\\\frac
    Xx<y\leq\frac{2X}x}}b(y)e\left(\xi(g(xy)+P(xy))\right)\rvert^2\\
&\ll (\log X)^2X^{4\varepsilon}X_1\\
&\quad\times\left(X_1\frac{X}{X_1}+\lvert \sum_{X_1<x\leq2X_1} \sum_{A < y_1 < y_2 \leq B}e\left(\xi (g(xy_1)-g(xy_2) + P(xy_1)-P(xy_2))\right)  \rvert\right),
\end{align*}
where $A = \max \{U, \frac{X}{x} \} $ and
$B = \min \{U, \frac{2X}{x} \}$. A change of the order of summation yields
\begin{multline*}
|S_2|^2 \ll (\log X)^2X^{4\varepsilon}X_1\\
\times\left(X+
  \sum_{A < y_1 < y_2 \leq B}\lvert \sum_{X_1<x\leq2X_1} e\left(\xi (g(xy_1)-g(xy_2) + P(xy_1)-P(xy_2))\right)  \rvert\right).
\end{multline*}

We fix $y_1$ and $y_2 \ne y_1$. Similarly as above we get
\[\lvert\frac{\partial^j\left(g(xy_1)-g(xy_2)+P(xy_1)-P(xy_2)\right)}{\partial x^j}\rvert\asymp\frac{\lvert
  y_1-y_2\rvert}{y_1}X^{\theta_r}X_1^{-j}.\] In this case we suppose
that $j\geq2\lfloor\theta_r\rfloor+3$ in order to obtain
\[\xi\frac{\lvert
  y_1-y_2\rvert}{y_1}X^{\theta_r}X_1^{-j}
\ll X^{\frac1{10}+\theta_r}\left(\frac{X}{V}\right)^{-j}
\ll X^{\frac1{10}+\theta_r-\frac23j}
\ll X^{-\frac12}.\]
Thus again an application of Lemma \ref{bkmst:lem2.5} yields
\begin{equation}\label{mani:estim:S2}
\begin{split}
\lvert S_2\rvert^2 &\ll (\log X)^2X^{4\varepsilon}X_1\left(X +
  \sum_{A<y_1<y_2\leq B} X_1\left( X_1^{-\frac{1}{K}} +
    \left(\lvert\xi\rvert X^{\theta_r}\right)^{-\frac1{K}} + X^{-\frac{1}{2} \frac{1}{4K \cdot 2L^2 - 2K}}\right)\right) \\
&\ll (\log X)^2X^{4\varepsilon}\left(X^{2-\frac2{3K}} + X^{2-\frac{\rho}{K}} + X^{2- \frac{1}{16KL^2 - 4K}}\right).
\end{split}
\end{equation}

Plugging the two estimates \eqref{mani:estim:S1} and
\eqref{mani:estim:S2} into \eqref{mani:S} together with a summation
over all intervals proves the proposition.
\end{proof}

\section{Exponential sum estimates for the case $\theta_r<k$}
As above we write $f(x)=g(x)+P(x)$, where $P$ is a polynomial of
degree $k$ with real coefficients and
$g(x)=\sum_{j=1}^r\alpha_jx^{\theta_j}$ with
$1<\theta_1<\ldots<\theta_r$ and $\theta_j\not\in\Z$ for
$j=1,\ldots,r$. Then we consider two cases according to the size of
$\lvert\beta\rvert$:
\[N^{1-\rho-k}<\lvert\beta\rvert\leq N^{\rho-\theta_r}
  \quad\text{and}\quad
N^{\rho-\theta_r}<\lvert\beta\rvert\leq N^{1/10},\]
where $\rho$ is as in (\ref{rho}).

The``large'' coefficient case may be treated as in Section
\ref{sec:expon-sum-estim}. For the case of smaller coefficients we
need a completely different approach. On the one hand the coefficients
are too big ($\lvert\xi\rvert>N^{1-\rho-k}$) to use the method of van
der Corput and on the other hand the polynomial part is the dominant
one ($k>\theta_r$) and an application of Weyl differencing as in the
case of the large coefficients would make the polynomial disappear.

The idea is to separate the polynomial $P$ and the real function $g$
by means of a partial summation. Then for the sum over the polynomial
we use standard estimates due to Weyl
\cite{weyl1916:ueber_die_gleichverteilung} and
Harman~\cite{harman1981:trigonometric_sums_over} for the integer and
prime case, respectively. Since these are standard methods we present
only the non-standard steps and refer the interested reader to Chapter
3 of Montgomery~\cite{montgomery1994:ten_lectures_on}, Chapter 4 of
Nathanson \cite{nathanson1996:additive_number_theory} or the monograph
of Graham and Kolesnik \cite{graham_kolesnik1991:van_der_corputs} for
a more complete account on Weyl-van der Corput's method.

\begin{prop}\label{prop:mediumintegers}
Let $f$ be a pseudo polynomial and $\varepsilon>0$. Suppose that
$\rho(k+3)<1$ and that
\[N^{3\rho-k}\leq\lvert \xi\rvert \leq N^{\rho-\theta_r}.\]
Then for sufficiently large $N$
\[
\sum_{n\leq N}e\left(\xi f(n)\right)\ll N^{1-\rho2^{1-k}+\varepsilon}.
\]
\end{prop}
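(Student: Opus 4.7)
The plan, following the hint given in the paper, is to peel off the polynomial part $P$ of $f=g+P$ by means of Abel summation, so that the problem reduces to bounding partial sums of the Weyl-type exponential sum $\sum_{n\leq t}e(\xi P(n))$ weighted by the smooth factor $e(\xi g(n))$. The saving $N^{-\rho\cdot 2^{1-k}}$ in the statement has exactly the shape of a Weyl bound for a polynomial of degree $k$, so the goal is to engineer the splitting so that Weyl's inequality can be applied on each piece.

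First, I would split $[1,N]$ dyadically into blocks $(X,2X]$ and, on each block, split further into subintervals of length $L=X^{1-\rho}$. The reason for this second subdivision is that the total variation of $u(t):=e(\xi g(t))$ on $(X,2X]$ is $\asymp |\xi| X^{\theta_r}\leq N^\rho$, which is too large to allow a single application of Abel summation to yield any saving; choosing $L=X^{1-\rho}$ makes the variation of $u$ on each sub-block bounded, and Abel summation on a sub-block then reduces the estimate to $O(1)$ times the maximum of a partial sum of $e(\xi P(n))$ of length at most $L$.

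Next, I would apply Weyl's inequality (as in Chapter~3 of \cite{montgomery1994:ten_lectures_on} or Chapter~4 of \cite{nathanson1996:additive_number_theory}) to each such partial sum. This requires a Dirichlet approximation $|\xi a_k-a/q|\leq 1/(qQ)$ with $(a,q)=1$ and $q$ lying in a range of the form $[L^{\rho'},L^{k-\rho'}]$ with $\rho'=\rho/(1-\rho)$; such a $q$ is produced by Dirichlet's theorem applied with $Q=L^{k-\rho'}$. The standard output is a Weyl bound of order $L^{1-\rho'\cdot 2^{1-k}+\varepsilon}$ on each subinterval. Summing the $X/L=X^{\rho}$ contributions on a dyadic block gives $X^{\rho}\cdot L^{1-\rho'\cdot 2^{1-k}+\varepsilon}=X^{1-\rho\cdot 2^{1-k}+O(\rho^2)+\varepsilon}$, and combining the $O(\log N)$ dyadic pieces finishes the estimate once the $O(\rho^2)$ term is absorbed into $\varepsilon$.

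The hard part will be the Diophantine case analysis inside the application of Weyl's inequality: one must rule out the possibility that Dirichlet's theorem returns a numerator $a=0$ with a very small denominator, which would invalidate the claimed range $q\geq L^{\rho'}$. This is precisely where the two hypotheses $|\xi|\geq N^{3\rho-k}$ and $\rho(k+3)<1$ enter; together they prevent this degeneracy and force $q$ into the range in which Weyl gives the full saving $q^{-2^{1-k}}\asymp N^{-\rho\cdot 2^{1-k}}$. The remainder of the argument is routine bookkeeping, which is why the authors refer the reader to the monographs of Montgomery and Nathanson for the standard steps.
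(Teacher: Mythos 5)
Your plan differs from the paper's proof, and it has a genuine gap precisely at the step you defer to the end: the Diophantine case analysis does not close at the scale you have chosen. After localizing to blocks of length $L\approx X^{1-\rho}$ (with $X\asymp N$) and discarding $e(\xi g)$ by Abel summation, you invoke Dirichlet with $Q=L^{k-\rho'}$, $\rho'=\rho/(1-\rho)$, so the guaranteed approximation quality is only $1/Q\approx N^{-(k-(k+1)\rho)}$. The hypothesis gives merely $|\xi\alpha|\gg N^{3\rho-k}$, and since $(k+1)\rho>3\rho$ for $k\geq 3$, every $\xi$ in the nonempty range $N^{3\rho-k}<|\xi|<N^{(k+1)\rho-k}$ is admissible for the proposition and yet forces the Dirichlet output to be $a=0$, $q=1$: no fraction $a/q$ with $a\neq 0$, $q\leq Q$ can approximate such a tiny $\xi\alpha$ to within $1/(qQ)$. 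With $q=1$ Weyl's inequality gives no saving on the short blocks, so the claim that the hypotheses ``force $q$ into $[L^{\rho'},L^{k-\rho'}]$'' fails for $k\geq 3$, and for $k=2$ your exponents are exactly borderline, so the argument does not close there either without modification. The underlying reason is that by cutting the sum into pieces of length $N^{1-\rho}$ you throw away the length of summation against which the lower bound $|\xi|>N^{3\rho-k}$ is calibrated.

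The paper avoids this by never stripping $g$ at the outset. It applies Weyl differencing $k-1$ times to the whole dyadic block of length $\asymp N$ (Lemma \ref{lem:mediumintegers}), separates the differenced $g$ only afterwards by partial summation --- at the cost of a factor $1+G$ with $G=|\xi|X^{\theta_r}\ll N^{\rho}$ --- and reduces to the sum $\sum_{t\leq k!X^{k-1}}\min\bigl(X,\Vert t\xi\alpha\Vert^{-1}\bigr)$. The rational approximation to $\xi\alpha$ is then taken at quality $N^{-(k-2\rho)}$ (Lemma \ref{lem:approximation}), where $|\xi\alpha|\gg N^{3\rho-k}>N^{2\rho-k}$ really does exclude the degenerate case $a=0$, $b=1$ and forces $N^{2\rho}\leq b\leq N^{k-2\rho}$; the extra saving coming from $b\geq N^{2\rho}$ is exactly what pays for the factor $N^{\rho}$ lost by carrying $g$ through the differencing. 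If you wish to keep your short-interval strategy, you would have to handle the regime of very small $|\xi\alpha|$ by a separate argument (for instance a van der Corput derivative test exploiting that $\xi P'(n)\asymp N^{3\rho-1}$ is nonzero there), which is no longer the ``routine bookkeeping'' your sketch promises.
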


Before proving this first result we need some tools and definitions.
We start with the forward difference operator $\Delta_d$, which is the
linear operator defined on functions $f$ by the formula
\[\Delta_d(f)(x)=f(x+d)-f(x).\]
For $\ell\geq2$, we define the iterated difference operator
$\Delta_{d_\ell,d_{\ell-1},\ldots,d_1}$ by
\[\Delta_{d_\ell,d_{\ell-1},\ldots,d_1}
=\Delta_{d_\ell}\circ\Delta_{d_{\ell-1},\ldots,d_1}
=\Delta_{d_\ell}\circ\Delta_{d_{\ell-1}}\circ\cdots\circ\Delta_{d_1}.\]

The following lemma describes the idea of ``Weyl differencing''.
\begin{lem}[{\cite[Lemma 4.13]{nathanson1996:additive_number_theory}}]
\label{nat:lem4.13}
Let $N_1$,$N_2$,$N$, and $\ell$ be integers such that $\ell\geq1$,
$N_1<N_2$, and $N_2-N_1\leq N$. Let $f(n)$ be a real-valued arithmetic
function. Then
\[\lvert\sum_{n=N_1+1}^{N_2}e(f(n))\rvert^{2^\ell}
\leq(2N)^{2^\ell-\ell-1}\sum_{\lvert d_1\rvert<N}\cdots\sum_{\lvert
  d_\ell\rvert<N}\sum_{n\in
  I(d_\ell,\ldots,d_1)}e\left(\Delta_{d_\ell,\ldots,d_1}(f)(n)\right).\]
where $I(d_\ell,\ldots,d_1)$ is an interval of consecutive integers
contained in $[N_1+1,N_2]$.
\end{lem}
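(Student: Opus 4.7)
The plan is to apply Weyl differencing exactly $k-1$ times --- just enough to linearise the dominant polynomial $P$ --- and then to separate out the remaining smooth perturbation coming from $g$ by partial summation, so that what remains is a linear exponential sum that can be handled by the classical Weyl--Vinogradov estimates.

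Write $f = g + P$ with $P(x) = a_k x^k + \cdots + a_0$, $a_k \neq 0$. Applying Lemma~\ref{nat:lem4.13} with $\ell = k-1$ gives
\[
\left|\sum_{n\leq N}e(\xi f(n))\right|^{2^{k-1}} \leq (2N)^{2^{k-1}-k}\sum_{|d_1|,\ldots,|d_{k-1}|<N}\left|\sum_{n\in I}e\bigl(\alpha n + c + \xi \Delta_{d_{k-1},\ldots,d_1} g(n)\bigr)\right|,
\]
where $\alpha = \xi\,k!\,a_k\, d_1\cdots d_{k-1}$ and $c = c(d_1,\ldots,d_{k-1})$ is a constant (the $k-1$ differences linearise the top monomial of $P$ and kill all monomials of degree less than $k-1$). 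By Taylor's theorem
\[
\Bigl|\tfrac{d}{dn}\Delta_{d_{k-1},\ldots,d_1}g(n)\Bigr| \ll |d_1\cdots d_{k-1}|\,n^{\theta_r-k},
\]
so the total variation of $\xi \Delta_{d_{k-1},\ldots,d_1} g(n)$ over $n\in[1,N]$ is $\ll |\xi|\,|d_1\cdots d_{k-1}|\,N^{\theta_r-k+1}\ll N^\rho$, using $|\xi|\leq N^{\rho-\theta_r}$ and $|d_i|<N$.

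Next I would apply Abel summation inside the $n$-sum to factor out the smooth factor $e(\xi \Delta_{d_{k-1},\ldots,d_1} g(n))$, bounding each inner sum by $\ll N^\rho\min(N,\|\alpha\|^{-1})$. Summing over $d_1,\ldots,d_{k-1}$, the classical bound (see Montgomery~\cite{montgomery1994:ten_lectures_on}, Ch.~3) yields
\[
\sum_{|d_i|<N}\min(N,\|\alpha\|^{-1}) \ll N^{k-1+\varepsilon}\bigl(q^{-1}+N^{-1}+qN^{-k}\bigr)
\]
for any Dirichlet approximation $|\xi\,k!\,a_k - a/q|\leq 1/(qQ)$ with $(a,q)=1$ and $q\leq Q := N^{k-2\rho}$.

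The principal obstacle is the Diophantine step of showing $q \geq N^{2\rho}$. If instead $q<N^{2\rho}$, either $q=1$ and $a=0$, which forces $|\xi\,k!\,a_k|\leq 1/Q = N^{2\rho-k}$ and contradicts the lower bound $|\xi|\geq N^{3\rho-k}$; or $q\geq 2$ and $a\neq 0$, which forces $|\xi\,k!\,a_k|\geq 1/(2q)>N^{-2\rho}/2$ and contradicts the upper bound $|\xi|\leq N^{\rho-\theta_r}$ (using $\theta_r>1$ and $\rho(k+3)<1$, which together give $\rho-\theta_r<-2\rho$ for $N$ sufficiently large). Consequently $q^{-1}+N^{-1}+qN^{-k}\ll N^{-2\rho}$, and assembling all the estimates gives
\[
\left|\sum_{n\leq N}e(\xi f(n))\right|^{2^{k-1}} \ll N^{2^{k-1}-k}\cdot N^\rho\cdot N^{k-1-2\rho+\varepsilon} = N^{2^{k-1}-1-\rho+\varepsilon};
\]
extracting $2^{k-1}$-th roots yields $\ll N^{1-2^{1-k}(1+\rho)+\varepsilon}\leq N^{1-\rho\,2^{1-k}+\varepsilon}$, as claimed.
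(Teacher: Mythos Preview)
Your proposal does not address the stated lemma. Lemma~\ref{nat:lem4.13} is the general Weyl-differencing inequality, quoted verbatim from Nathanson and not proved in the paper; its proof is a routine induction on $\ell$ via Cauchy--Schwarz. What you have written is instead a proof sketch for Proposition~\ref{prop:mediumintegers}, which \emph{uses} Lemma~\ref{nat:lem4.13} as its opening step.

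If the intended target was Proposition~\ref{prop:mediumintegers}, then your approach is essentially the paper's: apply $k-1$ Weyl differences to linearise $P$, separate the smooth remnant of $g$ by Abel summation, bound the resulting linear sums by $\min(N,\lVert\alpha\rVert^{-1})$, and control the sum of minima via a good rational approximation to $\xi a_k$. The paper packages these steps into Lemma~\ref{lem:mediumintegers} and Lemma~\ref{lem:approximation} and passes first to dyadic ranges $(X,2X]$, whereas you work directly on $[1,N]$; the substance is identical. There is one quantitative slip: your claimed bound
\[
\sum_{|d_i|<N}\min\bigl(N,\lVert\alpha\rVert^{-1}\bigr)\ll N^{k-1+\varepsilon}\bigl(q^{-1}+N^{-1}+qN^{-k}\bigr)
\]
is too small by a factor of $N$; Lemma~\ref{nat:lem4.9} with $H\asymp N^{k-1}$ gives $\ll N^{\varepsilon}(q+N^{k-1}+N+N^k q^{-1})=N^{k+\varepsilon}(q^{-1}+N^{-1}+qN^{-k})$. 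With this correction the assembly yields $|S|^{2^{k-1}}\ll N^{2^{k-1}-\rho+\varepsilon}$ directly, hence $|S|\ll N^{1-\rho\,2^{1-k}+\varepsilon}$, matching Proposition~\ref{prop:mediumintegers} without the spurious extra saving of $2^{1-k}$ that appears in your final line.
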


In order to estimate the innermost sum of the previous lemma we use
the following observation.
\begin{lem}[{\cite[Lemma 4.7]{nathanson1996:additive_number_theory}}]
\label{nat:lem4.7}
For every real number $\alpha$ and all integers $N_1<N_2$,
\[\sum_{n=N_1+1}^{N_2}e(\alpha n)
\ll \min\left(N_2-N_1,\lVert\alpha\rVert^{-1}\right).\]
\end{lem}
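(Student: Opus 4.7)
The plan is to verify both entries of the minimum individually; once each holds, the lemma follows by taking the smaller. The first entry, $N_2 - N_1$, is an instance of the triangle inequality: every summand satisfies $\lvert e(\alpha n)\rvert = 1$, so the modulus of the sum is at most the number of terms. That disposes of one side with no work.

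For the second entry, $\lVert\alpha\rVert^{-1}$, the natural move is to recognise the sum as a finite geometric progression in the variable $z = e(\alpha)$. If $\alpha \in \Z$ then $\lVert\alpha\rVert = 0$ and the second entry of the minimum is $+\infty$, so the first bound already dominates and there is nothing to prove. Otherwise $z \neq 1$, and the closed form
\[\sum_{n=N_1+1}^{N_2} e(\alpha n) = e(\alpha(N_1+1)) \cdot \frac{e(\alpha(N_2 - N_1)) - 1}{e(\alpha) - 1}\]
applies. Since the numerator is bounded in modulus by $2$, the whole task reduces to a lower bound for $\lvert e(\alpha) - 1\rvert$ in terms of $\lVert\alpha\rVert$.

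The remaining step is the standard inequality $\lvert e(\alpha) - 1\rvert = 2\lvert\sin(\pi\alpha)\rvert \geq 4\lVert\alpha\rVert$, which I would derive from the elementary concavity estimate $\sin(\pi t) \geq 2t$ on $[0,1/2]$ together with the evenness and $1$-periodicity of $\lvert\sin(\pi\cdot)\rvert$ (both sides depend on $\alpha$ only through $\lVert\alpha\rVert$). Combining yields $\lvert\sum_n e(\alpha n)\rvert \leq (2\lVert\alpha\rVert)^{-1}$, which is the desired $O(\lVert\alpha\rVert^{-1})$ bound with explicit constant $\tfrac12$. There is no genuine obstacle here; the lemma is essentially a one-line geometric sum, and the only point worth highlighting is the classical sine-versus-distance-to-integers inequality that converts the analytic denominator $\lvert e(\alpha)-1\rvert$ into the arithmetic quantity $\lVert\alpha\rVert$ appearing in the statement.
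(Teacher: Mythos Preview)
Your proof is correct and is precisely the standard argument: the paper does not give its own proof of this lemma but merely cites Nathanson, whose proof is the same geometric-series computation combined with the inequality $\lvert e(\alpha)-1\rvert=2\lvert\sin(\pi\alpha)\rvert\geq 4\lVert\alpha\rVert$ that you use.
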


The final tool is the following estimate for sums of minima.
\begin{lem}[{\cite[Lemma 4.9]{nathanson1996:additive_number_theory}}]
\label{nat:lem4.9}
Let $\alpha$ be a real number. If
\[\lvert\alpha-\frac ab\rvert\leq\frac1{b^2},\]
where $b\geq1$ and $(a,b)=1$, then for any negative real numbers $H$
and $N$ we have
\[\sum_{h=1}^H\min\left(N,\frac{1}{\lVert\alpha h\rVert}\right)
\ll\left(b+H+N+\frac{HN}{b}\right)\max\{1,\log b\}.\]
\end{lem}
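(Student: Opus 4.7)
The plan is the classical block decomposition argument, following Chapter~3 of Montgomery \cite{montgomery1994:ten_lectures_on}. Write $\alpha = a/b + \theta/b^2$ with $|\theta| \leq 1$, partition the range $1 \leq h \leq H$ into $M \leq \lceil H/b \rceil + 1$ consecutive blocks of length at most $b$, and bound the contribution of a single block.

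First I would fix a block $\mathcal{B}$ of $b$ consecutive integers. As $h$ ranges over $\mathcal{B}$, the residues $ah \pmod b$ form a complete residue system modulo $b$ because $(a,b)=1$. Hence the points $\{ah/b\}$, $h \in \mathcal{B}$, coincide with $\{0, 1/b, 2/b, \ldots, (b-1)/b\}$ in some order. The perturbation $\theta h/b^2$ varies by at most $(b-1)/b^2 < 1/b$ across the block, so the points $\{\alpha h\}$ for $h \in \mathcal{B}$ lie within a $1/b$-neighbourhood of the equispaced set $\{j/b\}_{j=0}^{b-1}$. Consequently, for each integer $j$ with $1 \leq j \leq b/2$ there are $O(1)$ values of $h \in \mathcal{B}$ satisfying $\|\alpha h\| \in [j/b,(j+1)/b)$, and $O(1)$ exceptional values with $\|\alpha h\| < 1/b$.

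Next I would estimate the per-block contribution as
\[\sum_{h \in \mathcal{B}} \min\left(N, \frac{1}{\|\alpha h\|}\right) \ll N + \sum_{1 \leq j \leq b/2} \frac{b}{j} \ll N + b \log b,\]
where the term $N$ absorbs the $O(1)$ exceptional $h$ for which $1/\|\alpha h\|$ may exceed $N$. Summing over the $\ll H/b + 1$ blocks would then give
\[\sum_{h=1}^H \min\left(N, \frac{1}{\|\alpha h\|}\right) \ll \left(\frac{H}{b}+1\right)\bigl(N + b \log b\bigr) \ll \left(b + H + N + \frac{HN}{b}\right)\max\{1,\log b\},\]
which is the claimed bound.

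The main technical obstacle is the per-block counting step, namely verifying that the perturbation $\theta h/b^2$ does not cause too many of the values $\{\alpha h\}$ to cluster together or near zero. This reduces to the observation that the perturbation varies by less than $1/b$ \emph{across a single block}, so distinct points of $\{ah/b\}$, which are separated by integer multiples of $1/b$, remain separated after perturbation up to a bounded number of collisions per dyadic band $\|\alpha h\| \asymp j/b$. With this separation in hand, the rest of the argument is purely a harmonic-series computation yielding $b \log b$, and the bookkeeping over blocks produces each of the four terms $b$, $H$, $N$, and $HN/b$ in the final estimate.
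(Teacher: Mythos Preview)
The paper does not give its own proof of this lemma; it is quoted verbatim from Nathanson \cite[Lemma 4.9]{nathanson1996:additive_number_theory} and used as a black box. Your argument is exactly the standard block-decomposition proof found there (and in Chapter~3 of Montgomery \cite{montgomery1994:ten_lectures_on}), and it is correct. One small imprecision worth tightening: within a block the fractional parts $\{\alpha h\}$ cluster around a \emph{shifted} equispaced set $\{c+j/b\}_{j=0}^{b-1}$ (the shift $c$ depending on the block index), not literally around $\{j/b\}$; but since any translate of this set on $\R/\Z$ is again $1/b$-equispaced, your per-band counting of $\lVert\alpha h\rVert$ and the resulting bound $N+b\log b$ per block go through unchanged.
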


The following lemma links the estimate of the exponential sum with the
leading coefficient of the polynomial part and non-polynomial
part.
\begin{lem}\label{lem:mediumintegers}
  Let $X,k\in\N$ with $k\geq0$ and set $K=2^{k-1}$ Let $P$ be a
  polynomial of degree $k$ with real coefficients and let $\alpha$ be
  the leading coefficient. Let $g(x)$ be a real $k$ times continuously
  differentiable function on $[X,2X]$ such that
  $\lvert g^{(r)}(x)\rvert\asymp GX^{-r}$ ($r=1,\ldots,k$). Then, for
  $G$ and $X$ large enough, we have
  \begin{gather*}
    \lvert\sum_{X<n\leq 2X}e\left(P(n)+ g(n)\right)\rvert^{2^{k-1}}\ll
    X^{2^{k-1}-1}+(1+G)X^{2^{k-1}-k+\varepsilon}
    \sum_{t=1}^{k!X^{k-1}} \min\left(X,\frac1{\lVert
        t\alpha\rVert}\right)
  \end{gather*}
  with arbitrary $\varepsilon>0$.
\end{lem}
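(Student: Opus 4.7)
The natural strategy is \emph{Weyl differencing} $k-1$ times (via Lemma~\ref{nat:lem4.13}) to reduce the polynomial $P$ from degree $k$ to a linear expression, combined with Abel summation to handle the slowly varying perturbation coming from $g$. Indeed, the innermost bound must take the shape $\min(X, 1/\|\beta\|)$ for a linear frequency $\beta$, and the index $t = k!\, d_1\cdots d_{k-1}$ appearing on the right-hand side strongly suggests $\ell = k-1$ Weyl differencings.

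Applying Lemma~\ref{nat:lem4.13} with $\ell = k-1$ gives
\[
\left|\sum_{X<n\leq 2X}\!e(P(n)+g(n))\right|^{2^{k-1}} \!\ll X^{2^{k-1}-k}\!\!\sum_{|d_1|<X}\!\cdots\!\sum_{|d_{k-1}|<X}\left|\sum_{n\in I(\mathbf{d})}\!e(\Delta_{d_{k-1},\ldots,d_1}(P+g)(n))\right|.
\]
Tuples $\mathbf{d}$ with some $d_i = 0$ make the iterated difference vanish, so each inner sum is trivially $\ll X$; such tuples number $O(X^{k-2})$, contributing $O(X^{k-1})$ to the outer $\mathbf{d}$-sum and hence the term $X^{2^{k-1}-1}$ in the claimed bound.

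For nonzero $\mathbf{d}$, since $\deg P = k$ with leading coefficient $\alpha$, the $(k-1)$-st iterated difference turns $P$ into a linear polynomial with leading coefficient exactly $\beta := k!\,\alpha\, d_1\cdots d_{k-1}$. Setting $\psi(n) := \Delta_{d_{k-1},\ldots,d_1} g(n)$, an integral representation of an iterated difference combined with the hypothesis $|g^{(k)}(x)|\asymp GX^{-k}$ yields
\[
|\psi(n+1)-\psi(n)| = |\Delta_{1,d_{k-1},\ldots,d_1}g(n)| \ll GX^{-k}|d_1\cdots d_{k-1}|,
\]
so the total variation of $\psi$ over an interval of length $\leq X$ is bounded by $GX^{1-k}|d_1\cdots d_{k-1}| \leq G$ (since $|d_i|<X$).

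Abel summation applied to $\sum_n e(\beta n)\,e(\psi(n))$, together with Lemma~\ref{nat:lem4.7} giving $|\sum_{m\leq n}e(\beta m)|\ll \min(X,1/\|\beta\|)$ and $|e(\psi(n+1))-e(\psi(n))|\leq 2\pi|\psi(n+1)-\psi(n)|$, produces
\[
\left|\sum_{n\in I(\mathbf{d})}e(\Delta_{d_{k-1},\ldots,d_1}(P+g)(n))\right| \ll (1+G)\min\!\left(X,\frac{1}{\|k!\,\alpha\, d_1\cdots d_{k-1}\|}\right).
\]
Finally, parametrising $u = d_1\cdots d_{k-1}$, using the divisor bound $d_{k-1}(u)\ll X^{\varepsilon}$ for the number of factorisations, and substituting $t = k!\,u$ converts the $\mathbf{d}$-sum into $X^{\varepsilon}\sum_{t\leq k!X^{k-1}}\min(X,1/\|t\alpha\|)$, giving the second term claimed. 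The delicate point that must be handled carefully is the total-variation estimate for $\psi$, which is where the hypothesis $|g^{(r)}|\asymp GX^{-r}$ in the top case $r=k$ is used in an essential way; everything else is bookkeeping.
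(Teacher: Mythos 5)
Your proposal is correct and follows essentially the same route as the paper: Weyl differencing via Lemma~\ref{nat:lem4.13} with $\ell=k-1$, separation of the polynomial and non-polynomial parts by Abel/partial summation using the integral representation of the iterated difference of $g$ (total variation $\ll G$), Lemma~\ref{nat:lem4.7} for the resulting linear exponential sum with frequency $k!\,\alpha\,d_1\cdots d_{k-1}$, and the divisor bound to pass to the sum over $t$. No gaps worth noting.
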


\begin{proof}
Our first tool is Lemma~\ref{nat:lem4.13} with
$\ell=k-1$ to get
\begin{multline*}
  \lvert \sum_{X<n\leq 2X}e\left(P(n)+ g(n)\right)\rvert^{2^{k-1}}\\
  \ll X^{2^{k-1}-1}+X^{2^{k-1}-k}\sum_{1\leq\lvert d_1\rvert<X}\cdots
  \sum_{1\leq\lvert d_{k-1}\rvert<X}\sum_{n\in I(d_{k-1},\ldots,d_1)}
  e\left(\Delta_{d_{k-1},\ldots,d_1}(P+g)(n)\right).
\end{multline*}

Now we want to separate the polynomial and non-polynomial part. To
this end we set
\[a_n=e(\Delta_{d_{k-1},\ldots,d_1}P(n))
\quad\text{and}\quad
b_n=e(\Delta_{d_{k-1},\ldots,d_1}g(n)).\]
Then an application of partial summation yields
\[\sum_{X<n\leq 2X}a_nb_n
\leq
\lvert\sum_{X<n\leq 2X}a_n\rvert+
\sum_{X<h\leq 2X}\lvert b_h-b_{h+1}\rvert
\lvert\sum_{X<n\leq h}a_n\rvert.\]

For the non-polynomial part $b_h-b_{h+1}$ we note the following representation for
the forward difference operator (\textit{cf.} Lemma 2.7 of Graham and
Kolesnik \cite{graham_kolesnik1991:van_der_corputs})
\[\Delta_{d_{k-1},\ldots,d_1}(g)(n)=
\int_0^1\cdots\int_0^1
\frac{\partial^{k-1}}{\partial t_1\cdots\partial
  t_{k-1}}g(n+t_1d_1+\cdots+t_{k-1}d_{k-1})
\mathrm{d}t_1\cdots\mathrm{d}t_{k-1}.\]
Together with the mean value theorem we get
\[\lvert b_h-b_{h+1}\rvert
\asymp X^{k-1}G X^{-k}
= GX^{-1}.\]

Now we turn our attention to the polynomial part, which means to the sum
of $a_n$. Since
$\Delta_{d_{k-1},\ldots,d_1}(P)(n)=k!d_1\cdots d_{k-1}\alpha
n+m(d_1,\ldots,d_{k-1})$, where $m$ is a function not depending on
$n$, an application of Lemma~\ref{nat:lem4.7} yields
\begin{align*}
\lvert\sum_{X<n\leq h}e\left(\Delta_{d_{k-1},\ldots,d_1} P(n)\right)\rvert
&\ll
\min\left(h,\frac1{\lVert k!d_1\cdots d_{k-1}\xi\alpha\rVert}\right).
\end{align*}

Putting the two estimates together we get
\begin{multline*}
  \lvert \sum_{X<n\leq 2X}e\left(P(n)+ g(n)\right)\rvert^{2^{k-1}}\\
  \ll X^{2^{k-1}-1}+X^{2^{k-1}-k}\sum_{1\leq\lvert d_1\rvert<X}\cdots
  \sum_{1\leq\lvert  d_{k-1}\rvert<X}\left(1+G\right)
  \min\left(X,\frac1{\lVert k!d_1\cdots d_{k-1}\alpha\rVert}\right).
\end{multline*}
Noting that for every $t\leq k!X^{k-1}$ there are $X^\varepsilon$
solutions $d_1,\ldots,d_{k-1}$ to
\[ t=k!d_1\ldots d_{k-1},\]
we get that
\[
\lvert \sum_{X<n\leq 2X}e\left(P(n)+ g(n)\right)\rvert^{2^{k-1}}
\ll X^{2^{k-1}-1}+(1+G)X^{2^{k-1}-k+\varepsilon}
  \sum_{t=1}^{k!X^{k-1}}
  \min\left(X,\frac1{\lVert t\alpha\rVert}\right),
\]
which proves the lemma.
\end{proof}

We have seen that the estimates reduce to an approximation of the
leading coefficient of the polynomial part of $f$. The following
lemma shows that the leading coefficient of $\xi P(x)$ can always be
approximated well provided that $\xi$ is in the ``medium'' range.
\begin{lem}\label{lem:approximation}
  Let $N$, $\alpha$, $\rho$ and $\xi$ be positive reals and let
  $k\geq2$ be a positive integer. Suppose that $\rho<(k+3)^{-1}$ and that
  \begin{gather*}
    N^{3\rho- k}<\lvert\xi\rvert\leq N^{\rho-\theta_r}.
  \end{gather*}
  Then there exist coprime $a,b\in\Z$ such that
  \[\lvert b\xi\alpha-a\rvert\leq b^{-2}
  \quad\text{and}\quad
  N^{2\rho}\leq b\leq N^{k-2\rho}
  \]
  provided that $N$ is sufficiently large.
\end{lem}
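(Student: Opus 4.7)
The plan is to invoke Dirichlet's approximation theorem on the real number $\xi\alpha$, taking $Q=\lfloor N^{k-2\rho}\rfloor$. This produces coprime integers $a,b$ with $1\leq b\leq Q$ and $\lvert b\xi\alpha-a\rvert\leq 1/Q$; the upper bound $b\leq N^{k-2\rho}$ and the Diophantine inequality $\lvert\xi\alpha - a/b\rvert\leq 1/(bQ)\leq b^{-2}$ then fall out immediately. The substantive content of the lemma is therefore to secure the lower bound $b\geq N^{2\rho}$, which I would argue by contradiction: assume $1\leq b< N^{2\rho}$ and split into two cases according to whether $a$ vanishes.

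If $a=0$, then $\lvert b\xi\alpha\rvert\leq 1/Q=N^{2\rho-k}$, and since $b\geq 1$ this gives $\lvert\xi\rvert\leq N^{2\rho-k}/\alpha$. This contradicts the hypothesis $\lvert\xi\rvert>N^{3\rho-k}$ as soon as $N^{\rho}>1/\alpha$, which holds for $N$ sufficiently large.

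If instead $a\neq 0$, then $\lvert a\rvert\geq 1$. Since $\rho<1/(k+3)$ and $k\geq 2$ we have $2\rho-k<0$, so $1/Q=N^{2\rho-k}<1/2$ for large $N$, and thus $\lvert b\xi\alpha\rvert\geq \lvert a\rvert-1/Q\geq 1/2$. Combined with $b<N^{2\rho}$ this forces $\lvert\xi\rvert>1/(2\alpha N^{2\rho})$, and combining with $\lvert\xi\rvert\leq N^{\rho-\theta_r}$ yields $N^{\theta_r-3\rho}\leq 2\alpha$. This is impossible for large $N$ since $\theta_r>1>3\rho$ (using $3\rho<3/(k+3)\leq 3/5<1$).

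The main obstacle is the second case, where both hypotheses $\lvert\xi\rvert\leq N^{\rho-\theta_r}$ and $\rho(k+3)<1$ are essential: together they prevent $b\xi\alpha$ from being close to a nonzero integer for a denominator $b$ below $N^{2\rho}$. The case $a=0$, by contrast, is controlled directly by the lower bound $\lvert\xi\rvert>N^{3\rho-k}$, and the upper bound on $b$ together with the approximation quality are immediate consequences of the Dirichlet input.
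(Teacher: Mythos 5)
Your proof is correct and takes essentially the same route as the paper's: Dirichlet's theorem with denominator bound $N^{k-2\rho}$, then a contradiction argument for $b<N^{2\rho}$ that uses the lower bound $N^{3\rho-k}<|\xi|$ when $a=0$ and the upper bound $|\xi|\leq N^{\rho-\theta_r}$ together with $\theta_r>1>3\rho$ when $a\neq 0$, which is just a reorganization of the paper's case split $b\geq 2$ versus $b=1$ (with $a=0$ or not). Note that, exactly like the paper's own argument, what you actually establish is $|\xi\alpha-a/b|\leq b^{-2}$ (equivalently $|b\xi\alpha-a|\leq b^{-1}$), which is the form needed in the later applications, rather than the literally stated $|b\xi\alpha-a|\leq b^{-2}$; so this is not a gap relative to the paper.
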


\begin{proof}
  By Dirichlet's approximation theorem there exist coprime $a,b\in \Z$
  such that
  \begin{gather*}
    \lvert b\xi\alpha
    - a\rvert\leq N^{-k+2\rho}
  \quad\text{and}\quad 1\leq b\leq N^{k-2\rho}.
  \end{gather*}
  If $b\geq N^{2\rho}$, then there is nothing to show. Suppose the
  contrary. We distinguish different cases for the size of $b$.
  \begin{itemize}
  \item[\textbf{Case 1.}] $2\leq b<N^{2\rho}$. In this case we get
    \[N^{\rho-\theta_r}\gg\lvert\xi\alpha\rvert\geq\lvert\frac
      ab\rvert-\frac1{b^2}\geq\frac1{2b}\geq\frac12N^{-2\rho}.\] Since
    $3\rho<1<\theta_r$, this contradicts our lower bound for
    sufficiently large $N$.
  \item[\textbf{Case 2.}] $b=1$. This case requires a further
    distinction according to whether $a=0$ or not.
    \begin{itemize}
    \item[\textbf{Case 2.1.}]  $\lvert\xi\alpha\rvert\geq\frac12$. It
      follows
      that \[N^{\rho-\theta_r}\gg\lvert\xi\alpha\rvert\geq\frac12\]
      which is absurd for $N$ sufficiently large.
    \item[\textbf{Case 2.2.}]  $\lvert\xi\alpha\rvert<\frac12$. This
      implies that $a=0$ which yields
      \begin{gather*}
        N^{3\rho-k}\ll\lvert\xi\alpha\rvert\leq N^{-k+2\rho},
      \end{gather*}
      which again is absurd for sufficiently large $N$.\qedhere
    \end{itemize}
  \end{itemize}
\end{proof}

Now we have all tools at hand to prove the estimate in the integer
case.
\begin{proof}[Proof of Proposition \ref{prop:mediumintegers}]
We divide the sum into $\leq(\log N)$ subsums of the form
\[\sum_{X<n\leq 2X}e(\xi f(n))\]
and denote by $S$ a typical one of them. Without loss of generality
we may suppose that $X\geq N^{1-\rho}$. For $r=1,\ldots,k$ we have
\[\lvert \xi g^{(\ell)}(n)\rvert\asymp\lvert \xi\rvert
  X^{\theta_r-\ell}.\]
Thus an application of Lemma \ref{lem:mediumintegers} yields
\[S^K\ll X^{K-1}+\left(1+\lvert \xi\rvert
  X^{\theta_r}\right)X^{K-k+\varepsilon}
  \sum_{t=1}^{k!X^{k-1}}\min\left(X,\frac{1}{\lVert t\xi\alpha\rVert}\right).
\]

Let $a,b$ be positive integers such that
\[\lvert\xi\alpha-\frac ab\rvert\leq\frac1{b^2}.\]
Using Lemma \ref{nat:lem4.9} we get for the sum
\[ S^K\ll X^{K-1}+\left(1+\lvert \xi\rvert
  X^{\theta_r}\right)X^{K-k+\varepsilon}
  \left(b+\frac{X^k}b\right).\]
Taking the $K$th root and summing over all dyadic intervals $[X,2X]$
we get
\[\sum_{n\leq N}e\left(\xi f(n)\right)
  \ll N^{1-\frac1K+\varepsilon}+N^{\frac{\rho}{K}}
  N^{1-\frac{k}{K}+\varepsilon}\left(b+\frac{N^k}{b}\right)^{\frac1K},\]
where we used that $\lvert \xi\rvert\leq N^{\rho-\theta}$. Finally we
derive the bounds for $b$ of Lemma \ref{lem:approximation}
\[\sum_{n\leq N}e\left(\xi f(n)\right)
  \ll
  N^{1-\frac1K+\varepsilon}+N^{\frac{\rho}{K}}N^{1+\varepsilon}N^{-\frac{2\rho}{K}}
  \ll N^{1-\frac{\rho}{K}}+\varepsilon,\]
which is the desired result.
\end{proof}

Now we turn our attention to the prime case. Therefore we will reuse the
central idea of the good approximation but we have to adopt the Weyl
differencing part. In particular, we will obtain bilinear forms (sums
over products) instead of the usual exponential sums.
\begin{prop}\label{prop:mediumprimes}
  Let $N$ and $\rho$ be positive reals and $f$ be a
  pseudo-polynomial. If $\rho(k+3)<1$ and $\xi$ is such that
\begin{gather*}
  N^{3\rho-k}<\lvert \xi \rvert\leq N^{\rho-\theta_r}
\end{gather*}
holds, then
\begin{gather*}
  \sum_{p\leq N}e\left(\xi f(p)\right)\ll
  N^{1-\rho4^{1-k}+\varepsilon}
\end{gather*}
  with arbitrary $\varepsilon>0$.
\end{prop}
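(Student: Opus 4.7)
The plan is to combine Vaughan's identity with the Weyl-differencing and partial-summation machinery developed for Proposition~\ref{prop:mediumintegers}. First, by Lemma~\ref{mr:lem11} it suffices to bound $\max_{t\leq N}|\sum_{n\leq t}\Lambda(n)e(\xi f(n))|$ up to an $N^{1/2}$ error. A dyadic decomposition reduces matters to estimating $S=\sum_{X<n\leq 2X}\Lambda(n)e(\xi f(n))$ with $X\geq N^{1-\rho}$. I would then apply Vaughan's identity (Lemma~\ref{bkmst:lem23}) with the same parameters as in Proposition~\ref{prop:largeprimes}, namely $U=\tfrac14 X^{1/5}$, $V=4X^{1/3}$, and $Z$ the half-integer closest to $\tfrac14 X^{2/5}$, giving $S\ll 1+(\log X)S_1+(\log X)^8 S_2$ where $S_1$ is a Type I sum with $d_3$-weights and $S_2$ is a Type II bilinear sum with $d_4$-weights.

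For the Type I sum $S_1$, fix $x$ and set $Y=X/x$. The inner sum over $y$ is long, so one can treat $\xi(g(xy)+P(xy))$ as a pseudo-polynomial in $y$ whose polynomial part has degree $k$ with leading coefficient $\xi\alpha x^k$, and whose smooth part has derivatives of size $\asymp |\xi|(xy)^{\theta_r}y^{-r}$. Invoking Lemma~\ref{lem:mediumintegers} yields a bound in terms of a sum $\sum_{t\leq k!Y^{k-1}}\min(Y,\lVert t\xi\alpha x^k\rVert^{-1})$. By Lemma~\ref{lem:approximation} applied to $\xi\alpha$, this leading coefficient admits a rational approximation $a/b$ with $N^{2\rho}\leq b\leq N^{k-2\rho}$, and the factor $x^k$ is absorbed into the summation variable $t$ at the cost of the usual $X^\varepsilon$ divisor loss. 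Lemma~\ref{nat:lem4.9} then gives the desired savings.

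The main work is with the Type II sum $S_2$, and this is also where the exponent $\rho\cdot 4^{1-k}$ (rather than the integer-case value $\rho\cdot 2^{1-k}$) is forced on us. Following the scheme of Proposition~\ref{prop:largeprimes}, one splits $x$ into dyadic ranges $[X_1,2X_1]$ and applies Cauchy--Schwarz in $x$; this produces a diagonal $\ll X^2$ plus an off-diagonal double sum over pairs $y_1\neq y_2$ of $\sum_{X_1<x\leq 2X_1} e(\xi[f(xy_1)-f(xy_2)])$. For fixed $y_1,y_2$, the phase $\xi[f(xy_1)-f(xy_2)]$ is again a pseudo-polynomial in $x$: its polynomial part has degree $k$ with leading coefficient $\xi\alpha(y_1^k-y_2^k)$, and its smooth part has derivatives of size $\asymp |\xi| |y_1-y_2| y_1^{-1} X^{\theta_r}X_1^{-r}$. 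Applying Lemma~\ref{lem:mediumintegers} once more reduces the inner sum to a min-sum in $t$ ranging up to $k!X_1^{k-1}$, and Lemma~\ref{lem:approximation} applied to $\xi\alpha$ then controls $\lVert t\xi\alpha(y_1^k-y_2^k)\rVert$ after folding the multiplicative factor $(y_1^k-y_2^k)$ into the summation variable.

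The main obstacle is bookkeeping after Cauchy--Schwarz: the single squaring step is precisely what turns $K=2^{k-1}$ into $K^2=4^{k-1}$ in the exponent, and one has to verify that the rational approximations supplied by Lemma~\ref{lem:approximation} survive the multiplication by $x^k$ or $(y_1^k-y_2^k)$ with denominators still lying in the range where Lemma~\ref{nat:lem4.9} produces a nontrivial saving. Summing the resulting estimates over the dyadic intervals in $x$ and $y$, combining the Type I and Type II bounds, and undoing the initial dyadic decomposition yields the claimed bound $\sum_{p\leq N}e(\xi f(p))\ll N^{1-\rho\cdot 4^{1-k}+\varepsilon}$.
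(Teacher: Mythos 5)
Your opening (Lemma~\ref{mr:lem11}, dyadic decomposition, Vaughan's identity with $U=\tfrac14X^{1/5}$, $V=4X^{1/3}$, $Z\approx\tfrac14X^{2/5}$) matches the paper, but from that point on you diverge from its argument, and the divergence contains a genuine gap. The paper treats the Type~I and Type~II sums uniformly as bilinear forms $\sum_u\sum_v\varphi(u)\psi(v)e(\xi f(uv))$ and feeds them into the two Harman-style lemmas proved just before (Lemma~\ref{har:cor:lem3} and Lemma~\ref{har:lem4}), choosing between them according to the threshold \eqref{lem3orlem4}. In Lemma~\ref{har:cor:lem3} the Cauchy--Schwarz inequality is iterated $k-1$ times in the \emph{short} variable $v$, and only then is Lemma~\ref{lem:mediumintegers} (Weyl differencing in the \emph{long} variable $u$) applied; the exponent $4^{1-k}$ is $1/K^2$ with $K=2^{k-1}$, i.e.\ it comes from composing $2^{k-1}$-fold differencing in \emph{each} of the two variables. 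Your statement that ``the single squaring step is precisely what turns $K=2^{k-1}$ into $K^2=4^{k-1}$'' is arithmetically off: one application of Cauchy--Schwarz halves the saving, producing $2K=2^k$, not $K^2=2^{2k-2}$.

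More importantly, your actual scheme --- one Cauchy--Schwarz in the long variable $x$, then Lemma~\ref{lem:mediumintegers} applied in $x$ for each fixed pair $y_1\neq y_2$, with $(y_1^k-y_2^k)$ folded into the min-sum --- does not deliver any saving once $k\geq3$. After Cauchy--Schwarz the off-diagonal contains $\asymp Y_1^2$ pairs with $Y_1=X/X_1$, and folding gives
\[
\sum_{y_1\neq y_2}\ \sum_{t\leq k!X_1^{k-1}}\min\bigl(X_1,\lVert t\xi\alpha(y_1^k-y_2^k)\rVert^{-1}\bigr)
\ll X^{\varepsilon}\Bigl(b+X^{k-1}Y_1+X_1+\tfrac{X^k}{b}\Bigr),
\]
so even the most favourable term $X^k/b\leq X^kN^{-2\rho}$ contributes, after H\"older over the pairs and multiplication by the prefactors $(1+G')^{1/K}X_1^{1-k/K}$, a quantity of order $X^2\,Y_1^{(k-2)/K}N^{-\rho/K}$ to $\lvert S_2\rvert^2$. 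Since $Y_1$ can be as large as $X^{1/3}$, the factor $Y_1^{(k-2)/K}$ overwhelms $N^{-\rho/K}$ for every fixed admissible $\rho$ as soon as $k\geq3$, and the terms $b\leq N^{k-2\rho}$ and $X^{k-1}Y_1$ are no better; an analogous loss of $\sum_x x^{k/K-1}$ occurs in your per-$x$ treatment of the Type~I sum. In other words, putting all the differencing into the long variable after a single squaring cannot reach the stated exponent; the paper avoids this exactly by differencing $k-1$ times in the short variable first (so the min-sum is created jointly over all $(d_1,\ldots,d_{k-1},v)$, not pair by pair) and by the case distinction \eqref{lem3orlem4}, which handles the fact that the denominator $b$ from Lemma~\ref{lem:approximation} may lie anywhere in $[N^{2\rho},N^{k-2\rho}]$ --- an obstacle you name (``denominators still lying in the range where Lemma~\ref{nat:lem4.9} produces a nontrivial saving'') but do not resolve. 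To repair the proof you should follow the route through Lemmas~\ref{har:cor:lem3} and~\ref{har:lem4} rather than the single-Cauchy--Schwarz scheme of Proposition~\ref{prop:largeprimes}.
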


For the prime case we need two further tools dealing with the bilinear
forms appearing after an application of Vaughan's identity (Lemma
\ref{bkmst:lem23}). These are adoptions of the corresponding Lemmas 3
and 4 of Harman~\cite{harman1981:trigonometric_sums_over} to our
setting of pseudo-polynomial functions. Let $\varphi$ and $\psi$ be
two real functions. Then for $U$, $V$ and $X$ reals, we consider sums
of the form
\begin{gather*}
\sum_{u=1}^U\sum_{\substack{v=1\\ X< uv\leq
      2X}}^V\varphi(u)\psi(v)e(P(uv)+g(uv)),
\end{gather*}
where $P$ is a polynomial of degree $k$ and $g$ is a $k$ times
continuously differentiable real function. Furthermore we define
$\Psi$ by
\begin{multline*}\Psi(n,y_1,\ldots,y_s)\\
=\psi(n)\prod_{i=1}^s\psi(n+y_i)\prod_{1\leq i<j\leq s}\psi(n+y_i+y_j)
\cdots\prod_{i=1}^s\psi\left(n+\sum_{j\neq i}y_i\right)
\psi\left(n+\sum_{i=1}^sy_i\right).
\end{multline*}

The first Lemma of Harman~\cite{harman1981:trigonometric_sums_over} is
the following.
\begin{lem}\label{har:cor:lem3}
  Let $P$ be a polynomial of degree $k$ with real coefficients and let
  $\alpha$ be its leading coefficient. Let $g(x)$ be a real ($2k+1$) times
  continuously differentiable function on $[X,2X]$ such that
  $\lvert g^{(r)}(x)\rvert \asymp GX^{-r}$ ($r=1,\ldots,k$). Set
  \[T=\max\lvert \psi(v)\rvert,\quad\text{and}\quad
    F=\left(\frac1U\left(\sum_{u\leq
          U}\varphi(u)^2\right)\right)^{\frac12}.\]
  For positive integers $U$, $V$, $X$ write
  \begin{gather}\label{bilinear:sum}
    S=\sum_{u=1}^U\sum_{\substack{v=1\\ X< uv\leq
        2X}}^V\varphi(u)\psi(v)e(P(uv)+g(uv)).
  \end{gather}
  Suppose that there exist $a,b\in\Z$ such that
  \[\lvert \alpha-\frac ab\rvert\leq\frac1{b^2}.\]
  Then we have
  \[\left(\frac{S}{TF}\right)^{K^2}\ll
  (UV)^{K^2+\varepsilon}\left(V^{-K}+(1+G)\left(U^{-1}+b^{-1}+(UV)^{-k}b\right)\right),\]
  where $K=2^{k-1}$.
\end{lem}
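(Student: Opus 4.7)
The plan is to follow the bilinear-form strategy of Harman~\cite{harman1981:trigonometric_sums_over}, suitably adapted to the pseudo-polynomial setting already used in the proof of Lemma~\ref{lem:mediumintegers}. The heart of the argument combines one Cauchy--Schwarz inequality on the outer variable (to strip off the weight $\varphi$) with iterated Weyl differencing on the inner $v$-sum (to reduce the polynomial part $P(uv)$ to a linear function of the summation variable), while a partial summation at each step separates the polynomial contribution from the smooth contribution of $g$.

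First, I would apply Cauchy--Schwarz on the sum over $u$ to obtain
\[|S|^{2}\le UF^{2}\sum_{u=1}^{U}\left|\sum_{v}\psi(v)e\bigl(P(uv)+g(uv)\bigr)\right|^{2}.\]
Expanding the modulus squared and performing the van der Corput shift $v_{2}=v_{1}+y_{1}$ produces a sum over $y_{1}$ and $v_{1}$ whose phase is a polynomial in $v_{1}$ of degree $k-1$ with leading coefficient a constant multiple of $\alpha u^{k}y_{1}$. Iterating this Cauchy shift $k-2$ further times (raising to the appropriate power at each stage in the manner of Weyl's method) reduces the polynomial part to a linear expression in $v_{1}$ with leading coefficient $k!\,\alpha\,u^{k}\,y_{1}\cdots y_{k-1}$. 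The cumulative squarings account for the exponent $K^{2}=4^{k-1}$ on the left-hand side, and the weights that appear at each level are exactly the shifted products $\Psi(n,y_{1},\ldots,y_{j})$ introduced just before the statement of the lemma.

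At every differencing step, the contribution of the smooth function $g$ is controlled by partial summation, exactly as in the proof of Lemma~\ref{lem:mediumintegers}: using the integral representation of the iterated difference operator together with the assumption $|g^{(r)}(x)|\asymp GX^{-r}$, one obtains a total variation estimate of order $G/X$ for the relevant $b$-sequence, which is precisely what produces the factor $(1+G)$ in the final bound. The innermost, now linear, exponential sum in $v$ is estimated by Lemma~\ref{nat:lem4.7} as $\min(V,\lVert k!\alpha u^{k}y_{1}\cdots y_{k-1}\rVert^{-1})$, while the trivial diagonal contribution arising from the iterated Cauchy steps delivers the $V^{-K}$ term. It remains to sum the minima over $u$ and $y_{1},\ldots,y_{k-1}$: using the hypothesis $|\alpha-a/b|\le b^{-2}$ together with Lemma~\ref{nat:lem4.9}, and absorbing the combinatorial multiplicity from the number of representations of an integer $t\le k!U^{k}V^{k-1}$ as $u^{k}y_{1}\cdots y_{k-1}$ into the factor $(UV)^{\varepsilon}$ via the divisor bound $d_{k}(t)\ll t^{\varepsilon}$, one arrives at the three arithmetic terms $U^{-1}+b^{-1}+(UV)^{-k}b$.

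The main obstacle I expect is the careful bookkeeping across the $k-1$ iterated Cauchy--Weyl differencing steps: one must maintain control of the shifted weights $\psi(v+\sum_{i}y_{i})$ (which is where $\Psi$ and the factor $T$ enter), verify that the partial summation which separates $P$ from $g$ is uniform in $u$ and in all of the difference parameters, and track the combined powers of $U$, $V$, $T$, $F$ and $G$ so that they consolidate into precisely the form stated. In particular, one must check that the requirement $|g^{(r)}(x)|\asymp GX^{-r}$ holds up to the order ($2k+1$) of smoothness needed for the iterated differencing, so that the smooth part never oscillates enough to spoil the linearization of the polynomial part achieved by the Weyl shifts.
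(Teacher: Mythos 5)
Your opening moves match the paper's: Cauchy--Schwarz over $u$ to strip $\varphi$, then iterated differencing in the $v$-variable, with the shifted products $\Psi$ collecting the $\psi$-weights. But the step where you finish is where the argument breaks. After the $k-1$ differencing steps the innermost $v$-sum is not a free linear exponential sum: it still carries the weight $\Psi(v,y_1,\ldots,y_{k-1})$, a product of $2^{k-1}$ shifts of the \emph{arbitrary} bounded function $\psi$ (in the application $\psi=b$ with $|b(v)|\le d_3(v)$). A sum $\sum_v \Psi(v,\mathbf{y})\,e(\lambda v)$ admits no bound of the form $\min\left(V,\lVert\lambda\rVert^{-1}\right)$ for general $\psi$ — choose $\psi$ to cancel the phase and the sum has size $V$ whatever $\lVert\lambda\rVert$ is — so Lemma \ref{nat:lem4.7} cannot be applied there. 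This is precisely the Type~I/Type~II distinction: summing the weighted variable against a linear phase is only legitimate in the situation of Lemma \ref{har:lem4}, where one weight is $1$ or $\log$. Relatedly, your exponent bookkeeping cannot be repaired within your scheme: $k-1$ Cauchy squarings produce $S^{K}$ with $K=2^{k-1}$, not $S^{K^2}$; the ``cumulative squarings give $4^{k-1}$'' claim has no source for the second factor of $K$.

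The paper's proof supplies exactly the missing ingredient. After differencing in $v$, the phase $\Delta_{d_{k-1},\ldots,d_1}(P)(uv)$ is linear in $v$ but is still a degree-$k$ polynomial in the \emph{weight-free} variable $u$, with leading coefficient $h(d_1,\ldots,d_{k-1},v)=\tfrac12 k!\,\alpha\, d_1\cdots d_{k-1}(2v+d_1+\cdots+d_{k-1})$. One therefore applies H\"older to raise the inner $u$-sum to the $K$-th power — this is where the total exponent $K^2$ comes from — and then invokes Lemma \ref{lem:mediumintegers}, which performs the Weyl differencing in $u$, separates $g$ by partial summation (producing the factor $1+G$), and yields $\sum_{t\le k!U^{k-1}}\min\left(U,\lVert t\,h\rVert^{-1}\right)$. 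Only at that point does the divisor-bound regrouping (integers $m\le (k!)^2V^kU^{k-1}$ with multiplicity $(UV)^{\varepsilon}$) and Lemma \ref{nat:lem4.9} with $|\alpha-a/b|\le b^{-2}$ produce the terms $U^{-1}+b^{-1}+(UV)^{-k}b$, while the diagonal terms of the Cauchy iterations give $V^{-K}$. So the differencing in $v$ serves only to prepare a product-structured leading coefficient; the harmonic-analysis saving must be extracted from the $u$-sum, not from the weighted $v$-sum as your proposal does.
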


\begin{proof}
  We may assume that $T=F=1$ and $\psi(v)\geq0$ for all $v$ as well as
  omit the restriction $X\leq uv\leq 2X$ for the moment. Then an
  application of the Cauchy-Schwarz inequality yields
  \begin{align*}
    S^2
    &\ll U\sum_{v_1=1}^V\sum_{v_2=1}^V\psi(v_1)\psi(v_2)
      \sum_{u=1}^Ue\left(P(uv_1)-P(uv_2)+g(uv_1)-g(uv_2)\right)\\
    &\ll US_1+E_1.
  \end{align*}
  For positive integers $s$ we write for short
  \[S_s=\sum_{d_1=1}^{V-1}\cdots\sum_{d_s=1}^{V-1}
    \sum_{v}\Psi(v,d_1,\ldots,d_s)\sum_{u=1}^U
    e\left(\Delta_{d_s,\ldots,d_1}(P+g)(uv)\right),\]
  where the forward difference operator $\Delta_{d_s,\ldots,d_1}$ acts
  on $v$ not on $u$ and the range of summation over $v$ being
  $X<v<v+d_1+\cdots+d_s\leq 2X$, and
  \[E_s=U^{2^s}V^{2^s-1}.\]

  An easy induction shows for $s=2,\ldots,k-1$ that
  \[S^{2^s}\ll E_s+U^{2^s-1}V^{2^s-s-1}\lvert S_s\rvert.\]

  Now we look at the innermost sum of $S_s$. Since (\textit{cf.} Lemma
  10B of Schmidt \cite{schmidt1977:small_fractional_parts})
  \begin{align*}\Delta_{d_{k-1},\ldots,d_1}(P)(uv)
  &=d_1\cdots d_{k-}\left(\tfrac12k!\alpha
    u^k(2v+d_1+\cdots+d_{k-1})+(k-1)!\beta u^{k-1}\right)\\
  &=u^kh(d_1,\ldots,d_{k-1},v)+u^{k-1}(k-1)!\beta d_1\cdots d_{k-1},
  \end{align*}
  we can see $\Delta_{d_{k-1},\ldots,d_1}(P)(uv)$ as a polynomial of
  degree $k$ with leading coefficient
  $h(d_1,\ldots,d_{k-1},v)$. Furthermore
  $\Delta_{d_{k-1},\ldots,d_1}(g)(uv)$ is a $k$ times differentiable
  function and we may apply Lemma \ref{lem:mediumintegers}. Thus after
  $k-1$ iterations of the Cauchy-Schwarz inquality we obtain
  \begin{align*}
    S^{K^2}
    &\ll
    \left(UV\right)^{K^2}V^{-K}+U^{K^2-K}V^{K^2-k}
    \sum_{d_1=1}^V\cdots\sum_{d_{k-1}=1}^V\sum_{v}
    \lvert \sum_{u=1}^{U} 
      e\left(\Delta_{d_{k-1},\ldots,d_1}(P+g)(uv)\right)\rvert^{K}\\
    &\ll
    \left(UV\right)^{K^2}V^{-K}+(UV)^{K^2-k+\varepsilon}(1+G)
    \sum_{d_1,\ldots,d_{k-1}}\sum_{v}
    \sum_{t=1}^{k!U^{k-1}}\min\left(U,\frac1{\lVert th(d_1,\ldots,d_{k-1},v)\rVert}\right).
  \end{align*}

  There are at most $(UV)^\varepsilon$ ways of writing an number
  $t\leq (k!)^2V^kU^{k-1}$ as a product of the form
  \[\tfrac12 d_1d_2\cdots d_{k-1}(k!)(2v+d_1+\cdots+d_{k-1})=t.\]
  Thus
  \[
  S^{K^2}\ll
  \left(UV\right)^{K^2}V^{-K}+(UV)^{K^2-k+\varepsilon}(1+G)
  \sum_{m=1}^{(k!)^2V^kU^{k-1}}
     \min\left(U,\frac1{\lVert m\alpha\rVert}\right).
  \]

  Finally we use Lemma \ref{nat:lem4.9} for the sum of minima to get
  the desired bound.
\end{proof}

As second part we adapt Lemma 4 of Harman
\cite{harman1981:trigonometric_sums_over}.
\begin{lem}\label{har:lem4}
Suppose we have the hypotheses of Lemma \ref{har:cor:lem3}, but either
\begin{align*}
\varphi(x)&=1,\text{ for all $x$,}\\
\text{or }\varphi(x)&=\log x,\text{ for all $x$.}
\end{align*}
Then
\[
  S\ll (UV)^{1+\varepsilon}V^{\frac{k-1}{K}}(1+G)^{\frac1K}
  \left((UV)^{-k}b+U^{-1}+b^{-1}\right)^{\frac1K}.
\]
\end{lem}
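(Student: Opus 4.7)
The plan is to exploit that $\varphi$ is either identically one or the logarithm, so that the initial Cauchy--Schwarz in the $u$ variable used in the proof of Lemma \ref{har:cor:lem3} to eliminate the weight $\varphi$ can be avoided. Saving one squaring should produce the exponent $K=2^{k-1}$ on $S$ instead of $K^2$, which accounts for the improvement.

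First, by partial summation in the case $\varphi(x)=\log x$ (trivially for $\varphi\equiv 1$), I would reduce to bounding
\[\tilde S(U')=\sum_{u\leq U'}\sum_{\substack{v\leq V\\ X<uv\leq 2X}}\psi(v)\,e(P(uv)+g(uv))\]
for some $U'\leq U$, losing only a factor $\log U\ll (UV)^{\varepsilon}$. I then swap the order of summation so that $v$ is the outer variable and $u$ the inner one. For each fixed $v$, the inner sum $B_v=\sum_u e(P(uv)+g(uv))$ is a Weyl-type sum whose polynomial part, viewed in $u$, has degree $k$ and leading coefficient $\alpha v^k$, while the non-polynomial part $u\mapsto g(uv)$ has $r$-th derivative $v^r g^{(r)}(uv)\asymp G(X/v)^{-r}$ on the fibre $u\in(X/v,2X/v]$. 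Hence the hypothesis of Lemma \ref{lem:mediumintegers} is satisfied uniformly in $v$, with unchanged $G$ and effective length at most $U$.

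I then apply Hölder's inequality in $v$ with exponents $K/(K-1)$ and $K$:
\[|\tilde S(U')|^K\leq\Bigl(\sum_v|\psi(v)|^{K/(K-1)}\Bigr)^{K-1}\sum_v|B_v|^K\ll T^K V^{K-1}\sum_v|B_v|^K,\]
and invoke Lemma \ref{lem:mediumintegers} to bound
\[|B_v|^K\ll U^{K-1}+(1+G)U^{K-k+\varepsilon}\sum_{t=1}^{k!U^{k-1}}\min\!\left(U,\frac{1}{\lVert t\alpha v^k\rVert}\right).\]
Summing over $v$ and substituting $m=tv^k$ (each $m\leq k!U^{k-1}V^k$ has at most $(UV)^{\varepsilon}$ such representations, bounded by the number of $k$-th power divisors of $m$) collapses the double sum, and Lemma \ref{nat:lem4.9}---applicable thanks to the hypothesis $|\alpha-a/b|\leq b^{-2}$---yields
\[\sum_{m\leq k!U^{k-1}V^k}\min(U,\lVert m\alpha\rVert^{-1})\ll(b+U^{k-1}V^k+U+U^kV^k/b)\max\{1,\log b\}.\]
Combining everything, absorbing the $U$ summand into $U^{k-1}V^k$ (valid for $k\geq 2$; the case $k=1$ is trivial since $K=1$), and taking the $K$-th root produces the asserted bound. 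The extra $V^{(k-1)/K}$ in the target comes precisely from the $V^{K-1}$ produced by Hölder, and the factor $T$ is absorbed into $(UV)^{\varepsilon}$ under the standing convention that $\psi$ is bounded by a divisor function.

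The main obstacle will be the bookkeeping around the coupling $X<uv\leq 2X$: after the swap, the inner $u$-range depends on $v$, and the derivative estimates for $u\mapsto g(uv)$ must be uniform in $v$ so that Lemma \ref{lem:mediumintegers} can be applied on every fibre with the same $G$ and with length bounded by $U$. The rescaling calculation above resolves this, but arranging the exponents in $V$ to match the target bound---without picking up spurious factors from the fibre-length being $X/v$ rather than $U$---requires careful bookkeeping, especially in the tension between the length $X/v$ appearing in the derivative asymptotics and the length $U$ appearing in the range of $t$ in the minima-sum.
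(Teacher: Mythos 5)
Your proposal is correct and follows essentially the same route as the paper: remove the logarithm by partial summation, apply H\"older over the $v$-variable to produce the factor $V^{K-1}$, estimate each inner $u$-sum by Lemma \ref{lem:mediumintegers} with leading coefficient $\alpha v^k$, collapse the resulting double sum over $t$ and $v$ via the substitution $w=tv^k$ (with a divisor-bound loss of $(UV)^{\varepsilon}$), and finish with Lemma \ref{nat:lem4.9}. Your extra care with the constraint $X<uv\leq 2X$ and with absorbing the $U^{K-1}$ and $U$ terms only makes explicit what the paper leaves implicit.
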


\begin{proof}
  By an application of partial summation we may easily remove the
  $\log$ factor. Therefore without loss of generality we assume that
  $\varphi(x)=1$. Using H\"older's inequality we obtain
  \[
    S^{K}\leq V^{K-1}\sum_{v=1}^V \lvert\sum_{u=1}^Ue(P(uv)+g(uv))\rvert^{K},
  \]
  where this time the forward difference operator
  $\Delta_{d_1,\ldots,d_{k-1}}$ is with respect to $u$.

  Now an application of Lemma \ref{lem:mediumintegers} for the
  innermost sum yields
  \begin{align*}
    S^{K}
    &\ll V^{K-1}\sum_{v=1}^V
    U^{K-k+\varepsilon}(1+G)
    \sum_{t=1}^{k!U^{k-1}}\min\left(U,\frac1{\lVert tv^k\alpha \rVert}\right)\\
    &\ll U^{K-k+\varepsilon}V^{K-1}(1+G)\sum_{w=1}^{k!U^{k-1}V^k}
    \min\left(U,\frac1{\lVert w\alpha \rVert}\right).
  \end{align*}
  An application of Lemma \ref{nat:lem4.9} proves the lemma.
\end{proof}

Finally we have to combine the two lemmas as in the proof of Harman.
\begin{proof}[Proof of Proposition \ref{prop:mediumprimes}]
  This proof starts along the same lines as the proof of Proposition
  \ref{prop:largeprimes}. An application of Lemma \ref{mr:lem11} 
  transforms the sum over the primes into the weighted sum
  \[\left| \sum_{p \leq N} e(\xi g(p) + \xi P(p))\right|\ll\frac1{\log
    N}\max\lvert\sum_{n\leq
    N}\Lambda(n)e\left(\xi(g(n)+P(n))\right)\rvert+N^{\frac12}.\]
  Then we split the inner sum into $\leq \log N$ subsums of the form
  \[\lvert \sum\limits_{X< n \leq
    2X}\Lambda(n)e\left(\xi(g(n)+P(n))\right)\rvert\]
  with $2X \leq N$ and we denote by $S$ a typical one of them. We may assume
  that $X \geq N^{1-\rho}$.

  Applying Vaughan's identity (Lemma~\ref{bkmst:lem23}) with the parameters
  $U = \frac{1}{4} X^{1/5}$, $V= 4 X^{1/3}$ and $Z$ the unique number
  in $\frac12+\N$, which is closest to $\frac{1}{4} X^{2/5}$,
  yields
  \[
    S \ll 1+(\log X)S_1+(\log X)^8S_2,
  \]
  where
  \begin{align*}
    S_1&=\sum_{x < \frac{2X}{Z}} d_3(x) \sum_{y > Z, \frac{X}{x} < y < \frac{2X}{x}} e\left(\xi(g(xy)+P(xy))\right)\\
    S_2&=\sum_{\frac XV<x\leq\frac{2X}U} d_4(x) \sum_{U < y < V, \frac{X}{x} < y \leq \frac{2X}{x}} b(y) e\left(\xi(g(xy)+P(xy))\right).
  \end{align*}
  We consider these two sums as variants of the following general sum
  \[S_3=\sum_{u\leq \frac{2X}{V}}\sum_{\substack{v\leq V\\ X<uv\leq
        2X}}\varphi(u)\psi(v)e\left(\xi f(uv)\right),\]
  where $V\ll X^{\frac 13}$ or $V\ll X^{\frac 23}$.

  Similar as in Proposition \ref{prop:mediumintegers} we get that
  \[\lvert \xi g^{(\ell)}(n)\rvert\asymp \lvert \xi\rvert
    X^{\theta_r-\ell}.\] Furthermore let $a,b\in\Z$ be as in Lemma
  \ref{lem:approximation}, \textit{i.e.}
  \[\lvert \xi\alpha-\frac{a}{b}\rvert\leq b^{-2}
  \quad\text{and}\quad
  N^{2\rho}\leq b\leq N^{k-2\rho}.\]
  
  Now we apply Lemma \ref{har:cor:lem3} and Lemma \ref{har:lem4} whether 
  \begin{gather}\label{lem3orlem4}
    V^K\geq N^{-\rho}\min\left(b,N^{\frac13},N^kb^{-1}\right)
  \end{gather}
  holds or not respectively. Suppose that \eqref{lem3orlem4} holds. In
  this case an application of Lemma \ref{har:cor:lem3} yields
  \begin{align*}
    S_3^{K^2}
    \ll X^{K^2+\varepsilon}\left(V^K+\lvert\xi\rvert X^{\theta_r}\left(X^{-\frac13}+b^{-1}+X^{-k}b\right)\right)
  \end{align*}

  On the contrary, if (\ref{lem3orlem4}) does not hold, then an
  application of Lemm \ref{har:lem4} yields
  \[
    S_3\ll X^{1+\varepsilon} V^{\frac{k-1}K}(1+\lvert\xi\rvert X^{\theta_r})^{\frac1K}
      \left(X^{-k}b+X^{-\frac13}+b^{-1}\right)^{\frac1K}
  \]
  where we have used that $1/K-(k-1)/K^2\geq 1/K^2$. 

  Summing over the intervals $[X,2X]$ using both estimates together
  with $\lvert \xi\rvert\leq N^{\rho-\theta_r}$ yields the desired
  bound.
\end{proof}

\section{The case of a single pseudo-polynomial}\label{sec:case-single-pseudo}
In the present section we want to prove Theorems
\ref{thm:single_heilbronn_set} and
\ref{thm:single_prime_heilbronn_set}. The main tool originates from
large sieve estimates due to Montgomery which provides us with a lower
bound if all elements are sufficiently far away from an integer.
\begin{lem}\label{baker:thm2.2}
  Let $M$ and $N$ be positive integers. Consider a sequence of real
  numbers $x_1,\ldots,x_N$ and weights $c_1,\ldots,c_N\geq0$. Suppose
  $\lVert x_j\rVert\geq M^{-1}$ for all $j=1,\ldots,N$. Then there
  exists $1\leq m\leq M$ such that
\[\lvert \sum_{n=1}^N c_n e(mx_n)\rvert\geq
\frac1{6M} \sum_{n=1}^N c_n.\]
\end{lem}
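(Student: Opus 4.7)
The plan is to argue by contradiction via a standard Fourier-analytic construction on the torus. Set $S(m) := \sum_{n=1}^N c_n e(m x_n)$ and $C := \sum_{n=1}^N c_n$, and suppose for contradiction that $\lvert S(m)\rvert < C/(6M)$ for every $m \in \{1,\dots,M\}$. Since $c_n$ and $x_n$ are real, $S(-m) = \overline{S(m)}$, so the bound extends to every $m$ with $0 < \lvert m\rvert \leq M$.

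The key device is a non-negative trigonometric polynomial $\Phi$ of Fourier degree at most $M$ that is sharply concentrated near $0 \bmod 1$. The canonical prototype is the renormalised Fejér kernel
\[
\Phi(x) = \frac{1}{M}\left(\frac{\sin \pi M x}{\sin \pi x}\right)^2 = \sum_{\lvert m\rvert \leq M-1}\frac{M-\lvert m\rvert}{M^2}\, e(mx),
\]
which satisfies $\Phi \geq 0$, $\hat{\Phi}(0) = 1/M$, and the pointwise bound $\Phi(x) \leq 1/(4M^2\lVert x\rVert^2) \leq 1/4$ on the set $\{x : \lVert x\rVert \geq 1/M\}$. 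For the optimal constant $1/(6M)$ one typically replaces it by a sharper Selberg-type extremal polynomial of Fourier degree $\leq M$ that minorises (or majorises) the indicator of the neighbourhood $[-1/M, 1/M]$.

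With $\Phi$ in hand, the proof proceeds by evaluating $\Sigma := \sum_{n=1}^N c_n \Phi(x_n)$ in two ways: first, pointwise, using $\lVert x_n\rVert \geq 1/M$ to obtain $\Sigma \leq A\, C$ for an explicit constant $A$ depending on $\Phi$; second, via Fourier inversion,
\[
\Sigma = \hat{\Phi}(0)\, C + \sum_{1 \leq \lvert m\rvert \leq M} \hat{\Phi}(m)\, S(m),
\]
from which the hypothesis $\lvert S(m)\rvert < C/(6M)$ together with control of $\sum_{m} \lvert\hat{\Phi}(m)\rvert$ produces the complementary lower bound
\[
\Sigma \geq \hat{\Phi}(0)\, C - \frac{C}{6M} \sum_{1 \leq \lvert m\rvert \leq M} \lvert\hat{\Phi}(m)\rvert.
\]
The polynomial $\Phi$ is then chosen so that these two estimates are mutually incompatible, yielding the desired contradiction.

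The main obstacle is producing this incompatibility uniformly in $M$. The plain Fejér kernel above with the pointwise bound $A = 1/4$ produces a contradiction only for small $M$; to obtain the statement for all $M\geq 1$ one invokes a Selberg-style extremal trigonometric polynomial of Fourier degree $\leq M$, whose $\ell^1$-Fourier norm is bounded by an absolute constant while $\hat{\Phi}(0)$ is as close as possible to the Lebesgue measure $2/M$ of the central neighbourhood. The explicit constant $6$ in the conclusion arises from the optimal balance in this extremal construction.
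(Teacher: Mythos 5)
Your general framework is the right one, and it is essentially the strategy behind the result the paper actually invokes: the paper gives no self-contained proof at all, but simply cites Baker's Theorem 2.2 and remarks that the weighted statement follows because every step of that argument is linear in the point masses, so nonnegative weights $c_n$ pass through unchanged. Your contradiction set-up, the reduction to a band-limited test function $\Phi$ of degree at most $M$, the two-sided evaluation of $\sum_n c_n\Phi(x_n)$, and the observation that the plain Fej\'er kernel is quantitatively insufficient for large $M$ are all correct (modulo a normalisation slip: with the prefactor $1/M$ the kernel you display has $\hat\Phi(0)=1$ and satisfies $\Phi(x)\le 1/(4M\lVert x\rVert^2)$, not the bounds you state; your stated bounds correspond to the prefactor $1/M^2$).

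The genuine gap is that the decisive step is asserted rather than proved. The entire quantitative content of the lemma is the existence of a trigonometric polynomial $\Phi$ with spectrum in $\lvert m\rvert\le M$ for which the pointwise bound $A=\sup_{\lVert x\rVert\ge 1/M}\Phi(x)$ and the Fourier data satisfy $\hat\Phi(0)-\frac1{6M}\sum_{1\le\lvert m\rvert\le M}\lvert\hat\Phi(m)\rvert>A$; you defer this entirely to an unspecified ``Selberg-style extremal polynomial'' with an unverified claim that its $\ell^1$ Fourier norm is bounded by an absolute constant, and you attribute the constant $6$ to an ``optimal balance'' that is never computed. Nothing in the write-up exhibits such a $\Phi$, records its coefficient bounds, or carries out the arithmetic that produces $1/(6M)$; note also that the Selberg \emph{minorant} of $[-1/M,1/M]$ (the natural choice, with $A=0$, $\hat\Phi(0)=\frac2M-\frac1{M+1}$ and $\lvert\hat\Phi(m)\rvert\le\frac1{M+1}+\min\bigl(\frac2M,\frac1{\pi\lvert m\rvert}\bigr)$, whence $\sum_{1\le\lvert m\rvert\le M}\lvert\hat\Phi(m)\rvert$ is bounded by an absolute constant well below $6$) is \emph{not} nonnegative, so your insistence on a nonnegative $\Phi$ sits awkwardly with the construction you invoke (one must either drop nonnegativity or add a constant and note the criterion is unchanged). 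To turn the sketch into a proof you must either write out such an extremal function with explicit coefficient estimates and verify the resulting numerical inequality uniformly in $M$, or do what the paper does: quote Baker's Theorem 2.2 and check that its proof is insensitive to the weights.
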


\begin{proof}
  This is a weighted version of \cite[Theorem
    2.2]{baker1986:diophantine_inequalities}.
\end{proof}

The second tool are Vaaler polynomials which we need to deal with the
floor function.

  \begin{lem}[{\cite[Theorem
      19]{vaaler1985:some_extremal_functions}}]\label{vaaler:indicator:function}
    Let $I\subset[0,1]$ be an interval and $\chi_I$ its indicator
    function. Then for each positive integer $H$ there exist
    coefficients $a_H(h)$ and $C_h$ for $-H\leq h\leq H$ with
    $\lvert a_H(h)\rvert\leq 1$ and $\lvert C_h\rvert\leq 1$ such that
    the trigonometric polynomial
    \[\chi^*_{I,H}(t)=\lvert I \rvert+\frac1\pi\sum_{0<\lvert h\rvert\leq
      H}\frac{a_H(h)}{\lvert h\rvert} e(ht)\] satisfies
    \[\lvert \chi_I(t) - \chi^*_{I,H}(t)\rvert \leq
    \frac1{H+1}\sum_{\lvert h\rvert\leq H}C_h\left(1-\frac{\lvert
        h\rvert}{H+1}\right)e(ht).\]
  \end{lem}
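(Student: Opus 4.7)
The plan is to realize $\chi^*_{I,H}$ as the symmetric average of Beurling--Selberg trigonometric majorants and minorants of $\chi_I$, and to derive the stated coefficient bounds from the explicit Fourier data of Beurling's extremal function. First I would invoke Beurling's construction: there exists an entire function $B$ of exponential type $2\pi$, bounded on $\mathbb{R}$, with $B(x) \geq \operatorname{sgn}(x)$ everywhere and $\int_{\mathbb{R}}(B(x)-\operatorname{sgn}(x))\,dx = 1$. The function $B$ admits a closed form in terms of $\bigl(\frac{\sin \pi z}{\pi}\bigr)^2$ and explicit interpolation data at the integers, which makes its Fourier transform computable.

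Next, following Selberg, I would write $I = [a,b] \subset [0,1]$ and define entire majorants/minorants of exponential type $2\pi(H+1)$ by
\[
M_{H}(x) = \tfrac12\bigl(B((H+1)(x-a)) + B((H+1)(b-x))\bigr), \quad m_H(x) = -M_H(-x)+\text{shift},
\]
arranged so that $m_H(x) \leq \chi_I(x) \leq M_H(x)$ on $\mathbb{R}$ and $\int(M_H-m_H)\,dx = 1/(H+1)$. Periodising via Poisson summation, $\widetilde M_H(t) := \sum_{n\in\Z}M_H(t+n)$ and $\widetilde m_H$ become trigonometric polynomials of degree $\leq H$ because $\widehat{M_H}$ vanishes outside $[-(H+1),H+1]$. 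The candidate $\chi^*_{I,H}$ is then taken as $\tfrac12(\widetilde M_H+\widetilde m_H)$; its zeroth Fourier coefficient is $|I|$ by $\int \chi_I = \int \tfrac12(M_H+m_H) = |I|$.

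The main obstacle is extracting the two explicit coefficient bounds. For the form $\frac{1}{\pi}\frac{a_H(h)}{|h|}$ of the non-zero coefficients, I would expand the Fourier transform of $B$ at integer points, use the explicit formula $\int_{\mathbb R}B(x)e(-hx)\,dx$ (rescaled by $H+1$), and check that after the shifts by $a,b$ and the Selberg symmetrisation the resulting coefficients factor as $\frac{e(-ha)-e(-hb)}{2\pi i h}$ times a bounded kernel $a_H(h)$. To obtain $|a_H(h)|\le 1$, I would invoke the monotonicity and boundedness of this kernel, which is a feature of Vaaler's normalisation. For the error term, the pointwise bound $|\chi_I-\chi^*_{I,H}|\le \tfrac12(\widetilde M_H-\widetilde m_H)$ reduces the problem to estimating the difference $\widetilde M_H - \widetilde m_H$, a non-negative trigonometric polynomial of degree $\leq H$ whose total mass is $1/(H+1)$. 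Writing it in the Fej\'er form $\frac{1}{H+1}\sum_{|h|\leq H}C_h(1-|h|/(H+1))e(ht)$ and using that the Fej\'er kernel provides an extremal approximate identity of that mass yields $|C_h|\le 1$.

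Finally I would verify consistency: evaluate at $t=0$ to confirm that the sum of coefficients reproduces the correct constant, and check that the bound is uniform in the position of $I$. The delicate point throughout is the sharp constant $1$ in both $|a_H(h)|\le 1$ and $|C_h|\le 1$; this is where Vaaler's explicit choice of $B$, as opposed to any other majorant of $\operatorname{sgn}$, is essential, and it is the only step where one cannot avoid a direct computation with the particular extremal function.
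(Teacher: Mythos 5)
First, a point of comparison: the paper does not prove this lemma at all --- it is quoted directly from Vaaler (his Theorem 19), and the remark following it says only that the coefficients $a_H(h)$, $C_h$ are explicit in Vaaler's proof. Your sketch is in substance a reconstruction of that cited proof rather than a different route: the average $\tfrac12(\widetilde M_H+\widetilde m_H)$ of the periodised Selberg majorant and minorant built from Beurling's function \emph{is} Vaaler's polynomial $\chi^*_{I,H}$. Within the reconstruction, however, the two quantitative claims of the statement are exactly the places where your justification does not hold as written. For the error term, your argument for $|C_h|\le 1$ --- that $\widetilde M_H-\widetilde m_H$ is a non-negative trigonometric polynomial of the right mass and the Fej\'er kernel is the extremal approximate identity of that mass --- is not a valid deduction: non-negativity together with a prescribed mean does not force Fej\'er-type coefficient decay (Fej\'er's coefficient theorem gives the sharp bound $\cos(\pi/(H+2))$ times the mean, which exceeds $1-\tfrac1{H+1}$ for $H\ge 2$). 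What actually rescues the step is the identity $B(z)+B(-z)=2\bigl(\tfrac{\sin\pi z}{\pi z}\bigr)^2$, which shows that $M_H-m_H$ is \emph{exactly} a sum of two Fej\'er kernels of type $2\pi(H+1)$ centred at the endpoints $a,b$ (so also $\int(M_H-m_H)=\tfrac2{H+1}$, not $\tfrac1{H+1}$ as you wrote); periodising and halving gives the bound with $C_h=\tfrac{e(-ha)+e(-hb)}2$, hence $|C_h|\le1$.

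For the main coefficients, writing $H_B(z)=\tfrac12\bigl(B(z)-B(-z)\bigr)$ and $J=\tfrac12 H_B'$, the $h$-th coefficient ($h\neq0$) of your averaged polynomial equals $\tfrac{e(-ha)-e(-hb)}{2\pi i h}\,\widehat J\bigl(\tfrac{h}{H+1}\bigr)$, so $|a_H(h)|\le1$ amounts to the inequality $0\le\widehat J(t)=\pi t(1-|t|)\cot(\pi t)+|t|\le1$ on $[-1,1]$. This is not a soft consequence of ``monotonicity and Vaaler's normalisation''; it is the explicit Fourier computation that constitutes the core of the cited theorem, and your proposal defers it. Since the paper itself treats the lemma as a black-box citation, deferring that computation is defensible, but you should be clear that as written your argument asserts rather than proves both coefficient bounds; with the identity for $B(z)+B(-z)$ and the computation of $\widehat J$ supplied, the outline is correct and coincides with Vaaler's.
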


\begin{rem}
  The coefficients $a_H(h)$ and $C_h$ are explicitly given in Vaaler's
  proof. However, the stated bounds are sufficient for our
  purposes.
\end{rem}

Now we are able to state the proof of Theorem
\ref{thm:single_heilbronn_set}.

\begin{proof}[Proof of Theorem \ref{thm:single_heilbronn_set}]
  Let $M=\lf N^\eta\rf$ where $\eta$ is a sufficiently small
  exponent. We conduct the proof by supposing that 
  \begin{gather}\label{farfrominteger}
    \min_{1\leq n\leq N}\lVert \xi\lf f(n)\rf\rVert\geq M^{-1}
  \end{gather}
  and deducing a contradiction. Since
  \[M^{-1}\leq\lVert \xi\lf f(n)\rf\rVert\leq\lVert \xi\rVert\cdot\lf
  f(n)\rf\ll\lVert \xi\rVert\] we get $\lvert\xi\rvert\gg
  M^{-1}$. Furthermore, by Lemma \ref{baker:thm2.2}, there exists
  $1\leq m\leq M$ such
  that \begin{gather}\label{mani:lower_bound_single}\lvert
    \sum_{n=1}^N e\left(m\xi\lf f(n)\rf\right)\rvert \gg \frac NM.
  \end{gather}
  The aim is to establish an upper bound of this exponential sum
  contradicting the lower bound for sufficiently small $\eta$.

We suppose for the moment that \[\lVert m\xi\rVert\geq N^{1-\rho-\deg f}.\]
The idea is to use digital expansion and Vaaler polynomials to get rid
of the floor function. Then we are left with an exponential sum and use
the tools from above.

Let $q\geq2$ be an integer, which is chosen later. Then we denote by $I_{d}$ with
$0\leq d< q-1$ the interval of all reals in $[0,1)$ whose initial
$q$-adic digit is $d$, \textit{i.e.}
  \[I_d:=\left[\frac dq,\frac{d+1}q\right).\]
  If $\{f(n)\}\in I_d$, then there exists $0\leq \vartheta<1$ such
  that $\{f(n)\}=\frac dq+\frac \vartheta q$. Thus
  \[e(m\xi\lf f(n)\rf) = e\left(m\xi f(n)-m\xi\frac
    dq\right)\left(1+\mathcal{O}\left(\frac1q\right)\right),\]
  yielding
  \begin{gather*}
    \lvert\sum_{n\leq N}e\left(m\xi\lf f(n)\rf\right)\rvert
    =\sum_{d=0}^{q-1}\lvert\sum_{n\leq N} e\left(m\xi
      f(n)\right)\chi_{I_d}(f(n))\rvert+\mathcal{O}\left(\frac
      Nq\right).
  \end{gather*}

Hence for a fixed $d\in\{0,\ldots,q-1\}$ we may write
\[
\lvert\sum_{n\leq N}
  e\left(m\xi f(n)\right)\chi_{I_d}(f(n))\rvert
\leq\lvert\sum_{n\leq N}
  e\left(m\xi f(n)\right)\chi^*_{I_d}(f(n))
\rvert+\sum_{n\leq N}\lvert \chi_{I_d}(f(n))-
\chi^*_{I_d,H}(f(n))\rvert,
\]
where we used the notation from Lemma
\ref{vaaler:indicator:function}. Using the estimates there we get for
the first part that
\begin{equation}\label{mani:expsum2}
\begin{split}
\lvert\sum_{n\leq N}
  e\left(m\xi f(n)\right)\chi^*_{I_d,H}(f(n))\rvert
&=\lvert\sum_{n\leq N}
  e\left(m\xi
    f(n)\right)\left(\frac1q+\frac1\pi\sum_{1\leq
      \lvert h\rvert\leq H}\frac{a_H(h)}{\lvert h\rvert}
    e(h f(n))\right)\rvert\\
&\leq\frac1q\lvert\sum_{n\leq N}
  e\left(m\xi f(n)\right)\rvert+
  \frac{1}{\pi} \sum_{0<\lvert h\rvert\leq
  H}\frac1h\lvert\sum_{n\leq N}
    e((m\xi+h) f(n))\rvert.
  \end{split}
\end{equation}

For the second part we again use the estimates in Lemma
\ref{vaaler:indicator:function} and arrive at
\begin{gather}\label{mani:expsum3}
\sum_{n\leq N}
\lvert \chi_{I_d}(f(n))-\chi^*_{I_d,H}(f(n)) \rvert
\leq\frac1{H+1}\sum_{\lvert h\rvert\leq H}\left(1-\frac{\lvert
      h\rvert}{H+1}\right)\lvert\sum_{n\leq N}e(hf(n))\rvert.
\end{gather}

The different exponential sums in (\ref{mani:expsum2}) and (\ref{mani:expsum3}) are of the form
\begin{gather}\label{mani:expsumforms}
  \sum_{n\leq N} e\left(m\xi f(n)\right),\quad
\sum_{n\leq N} e((m\xi+h) f(n))\quad\text{and}\quad
\sum_{n\leq N}e(hf(n)),
\end{gather}
respectively. We write them in the form $\sum_{n\leq N}e\left(\beta f(n)\right)$ for
short.

Since $\lVert m\xi\rVert\geq N^{1-\rho-\deg f}$ we get
$\lvert\beta\rvert\geq N^{1-\rho-\deg f}$ and we may distinguish the
following two cases.
\begin{itemize}
\item Suppose we have
  $N^{\rho-\theta_r}\leq\lvert\beta\rvert\leq N^{\frac1{10}}$. Then we
  write $f(x)=P(x)+g(x)$ where $P$ is a polynomial of degree $k$ and
  $g(x)=\sum_{j=1}^rd_jx^{\theta_j}$ with $1<\theta_1<\cdots<\theta_r$
  and $\theta_j\not\in\N$ for
    $1\leq j\leq r$. Now an application of Proposition~\ref{prop:largeintegers} yields
  \[\sum_{n\leq N}e\left(\beta f(n)\right)
  \ll N^{1-\frac{1}{8KL-4K}},\]
  where $K=2^k$ and $L=2^{\lf\theta_r\rf}$.
\item Now we suppose that
  $N^{1-\rho-\deg f}\leq\lvert\beta\rvert\leq N^{\rho-\theta_r}$. Then
  an application of Proposition~\ref{prop:mediumintegers} yields
  \[\sum_{n\leq N} e\left(\beta f(n)\right)\ll
  N^{1-\rho2^{1-k}+\varepsilon}\]
  with $\varepsilon>0$.
\end{itemize}
We set $\gamma=\min\left(\rho2^{1-k},(8KL-4K)^{-1}\right)$. Thus
\begin{align*}
\lvert\sum_{n\leq N}
  e\left(m\xi f(n)\right)\chi_{I_d}(f(n))\rvert
&\ll N^{1-\gamma}\left(\frac1q +\sum_{0<\lvert h\rvert\leq
  H}\frac1h+\frac1{H+1}\sum_{\lvert h\rvert\leq H}\left(1-\frac{\lvert h\rvert}{H+1}\right)\right)\\
&\ll N^{1-2\gamma+\varepsilon},
\end{align*}
where we have chosen $q=H=N^{\gamma}$. Finally we obtain
\[\lvert\sum_{n\leq N}e\left(m\xi\lfloor f(n)\rfloor\right)\rvert
  \ll N^{1-\gamma}.\] Plugging this upper
bound into the lower bound in (\ref{mani:lower_bound_single}) we get a
contradiction as soon as $\eta<\gamma$.

Now we turn our attention to the case of
$\lVert m\xi\rVert\leq N^{1-\rho-\deg f}$. Then there is some $h\in\Z$ such
that
\[\lvert\beta\rvert=\lvert m\xi+h\rvert\leq N^{1-\rho-\deg f}\]
and the coefficient in the second sum of (\ref{mani:expsumforms})
might be arbitraily small destroying our argument. Using the existence
of a multiple of $m$ in the sequence $(\lf f(n)\rf)_n$ yields a
contradiction to (\ref{farfrominteger}). In particular, for
sufficiently large $X$ (a small power of $N$ we will fix later) we
will show that there exists $1\leq n\leq X$ such that $\lf f(n)\rf$ is
a multiple of $m$.

Let
\[f(n)=a_\ell m^\ell+\cdots + a_1 m+a_0+a_{-1}m^{-1}+\cdots\]
be the $m$-adic expansion of $f(n)$. Then $\lf f(n)\rf$ is a multiple
of $m$ if and only if $a_0=0$ and this is the case if and only if
$m^{-1}f(n)\in[0,\frac1m)$.

Let $\mathcal{N}(f,X)$ be the number of $1\leq n\leq X$ such that
$\lf f(n)\rf$ is a multiple of $m$. Furthermore let $\chi$ be
the indicator function of the interval $[0,\frac1m)$. Then an
application of Vaaler polynomials (Lemma
\ref{vaaler:indicator:function}) yields
\begin{align*}\lvert\mathcal{N}(f,X)-\frac{X}m\rvert
  &\leq\sum_{1\leq n\leq
    X}\lvert\chi\left(\frac{f(n)}m\right)-\chi^*\left(\frac{f(n)}m\right)\rvert
  +\sum_{1\leq n\leq
    X}\lvert\chi^*\left(\frac{f(n)}m\right)-\frac1m\rvert\\
  &\leq \frac1{H+1}\sum_{\lvert h\rvert\leq H}C_h\left(1-\frac{\lvert
    h\rvert}{H+1}\right) \sum_{1\leq n\leq
    X}e\left(h\frac{f(n)}m\right)\\
  &\quad\quad+\frac1\pi\sum_{0<\lvert h\rvert\leq H}\frac{a_H(h)}{\lvert
    h\rvert}\sum_{1\leq n\leq X}e\left(h\frac{f(n)}m\right).
\end{align*}
Setting $X=M^{\frac1{1-\rho}}$ and $H=X^{\frac1{10}}$ we note that
\[X^{\rho-\theta_r}\leq X^{\rho-1}=\frac1M\leq\frac hm
  \leq X^{\frac1{10}}.\]
Then an application of Proposition \ref{prop:largeintegers} yields
\[\lvert\mathcal{N}(f,X)-\frac Xm\rvert\ll
  X^{1-\frac1{8KL-4K}+\varepsilon}.\]
Thus for sufficiently large $X$ (which is growing with $N$) we get
that $\mathcal{N}(f,X)\geq1$ and therefore there is a $1\leq n\leq X$
such that $\lf f(n) \rf=m\cdot r$ with $r\in \Z$. Thus
\[N^{-\eta}\leq\frac1M\leq \lVert \xi\lf f(n)\rf \rVert
\leq \lVert m\xi \rVert \cdot \lvert r \rvert
\leq N^{1-\rho-\deg f} M^{\frac{\deg f}{1-\rho}}\]
yielding a contradiction as long as
\[\eta<\frac{\deg f+\rho-1}{1+\frac{\deg f}{1-\rho}}.\]

Putting both cases together we get a contradction if 
\[\eta<\min\left(\rho2^{1-k},\frac1{8KL-4K}, 
  \frac{\deg f+\rho-1}{1+\frac{\deg f}{1-\rho}}\right),\]
proving the theorem.
\end{proof}

\begin{proof}[Proof of Theorem \ref{thm:single_prime_heilbronn_set}]
  This runs very much along the same lines as the proof of Theorem
  \ref{thm:single_heilbronn_set} above. Let $M=\lfloor N^\eta\rfloor$
  for a sufficiently small $\eta$ which we choose later. Suppose that
  $\lVert\xi\lfloor f(p)\rfloor\rVert\geq M^{-1}$ for all primes
  $2\leq p\leq N$. An application of Lemma \ref{baker:thm2.2} yields
  \begin{gather}\label{lower_bound_prime_single}
  \lvert\sum_{p\leq N}e\left(m\xi\lfloor f(p)\rfloor\right)\rvert\gg
  \frac{\pi(N)}{M},
  \end{gather}
  where $\pi$ is the prime-counting function. As in the integer case
  we are looking for an upper bound for the exponential sum yielding
  conditions on $\eta$.

  We start with the case of $\lVert m\xi\rVert\geq N^{1-\rho-\deg
    f}$. Following the lines of the integer case we have to find upper
  bounds for exponential sums of the form
  \[\sum_{p\leq N}e\left(\beta f(p)\right)\]
  with $\beta=m\xi$, $\beta=m\xi +h$ and $\beta=h$, respectively. We
  again distinguish two cases:
  \begin{itemize}
  \item Either we have $N^{1-\rho-\deg f}\leq \lvert\beta\rvert\leq
    N^{\rho-\theta_r}$. Then an application of Proposition
    \ref{prop:mediumprimes} yields
    \[\sum_{p\leq N}e\left(\beta f(p)\right)\ll N^{1-\rho4^{1-k}+\varepsilon}.\]
  \item Or we have $N^{\rho-\theta_r}\leq\lvert\beta\rvert\leq
    N^{\frac1{10}}$. Again we write $f(x)=P(x)+g(x)$ where $P$ is a
    polynomial of degree $k$ and   $g(x)=\sum_{j=1}^rd_jx^{\theta_j}$
    with $1<\theta_1<\cdots<\theta_r$ and $\theta_j\not\in\N$ for
    $1\leq j\leq r$. Then an application of Proposition
    \ref{prop:largeprimes} yields
    \[\sum_{p\leq N}e\left(\beta f(p)\right)\ll
      N^{1-\frac1{64KL^5-4K}+\varepsilon},\]
    where $K=2^k$ and $L=2^{\lf\theta_r\rf}$.
  \end{itemize}

  Now we turn our attention to the case of
  $\lVert m\xi \rVert\leq N^{1-\rho-\deg f}$. Again following the
  integer case above together with Proposition \ref{prop:largeprimes}
  we get the existence of a prime $2\leq p\leq M^{\frac1{1-\rho}}$
  such that $\lf f(p)\rf$ is a multiple of $m$. Repeating the steps
  from above we get a contradiction provided
  \[\eta<\frac{\deg f+\rho-1}{1+\frac{\deg f}{1-\rho}}.\]
  
  Together with the first case we get a contradiction provided that
  \[\eta<\min\left(\rho4^{1-k},\frac1{64KL^5-4K}, \frac{\deg
      f+\rho-1}{1+\frac{\deg f}{1-\rho}}\right)\] proving the theorem.
\end{proof}

\section{The multi-dimensional case}\label{sec:multi-dimens-case}
In this section we turn our attention to the case of simultaneous
approximation. The equivalent to
Theorem~\ref{thm:multiple_heilbronn_set} is the following more general
result.

\begin{thm}\label{thm:multiple_lattice_approximation}
  Let $f_1,\ldots,f_k$ be $\Q$-linear independent pseudo-polynomials,
  $\ell\in\N$ and $N\in\N$ sufficiently large. Then there exists
  $\theta_\ell>0$ such that for any lattice
  $\Lambda$ with determinant $\lvert\det(\Lambda)\rvert\leq N^{\theta_\ell}$, and
  any $\ell\times k$ matrix $A$ there exists $n\in\N$ with $1\leq
  n\leq N$ such that
  \[A\begin{pmatrix} \lfloor f_1(n)\rfloor \\ \vdots \\
    \lfloor f_k(n)\rfloor\end{pmatrix}\in\Lambda+B_\ell,\]
  where $B_\ell$ is the Euclidean unit ball in $\R^{\ell}$.
\end{thm}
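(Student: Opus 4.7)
The plan is to mimic the single-variable argument of Theorem~\ref{thm:single_heilbronn_set} in a multidimensional setting, arguing by contradiction and combining a lattice large sieve with the exponential sum estimates of Sections~\ref{sec:expon-sum-estim} and~4.

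Assume for contradiction that $A\lf\mathbf{f}(n)\rf\notin\Lambda+B_\ell$ for every $1\leq n\leq N$, so each vector $A\lf\mathbf{f}(n)\rf$ has Euclidean distance strictly exceeding $1$ from $\Lambda$. A weighted lattice version of Lemma~\ref{baker:thm2.2} (the multidimensional large sieve, \textit{cf.}\ Montgomery~\cite{montgomery1994:ten_lectures_on} or Baker~\cite{baker1986:diophantine_inequalities}) then yields a non-zero dual lattice point $\mathbf{m}\in\Lambda^*$ whose norm is polynomially controlled by $\lvert\det\Lambda\rvert\leq N^{\theta_\ell}$, together with a lower bound of the form
\[\lvert\sum_{n=1}^N e\left(\mathbf{m}\cdot A\lf\mathbf{f}(n)\rf\right)\rvert\gg N\cdot N^{-C\theta_\ell}\]
for some $C>0$ depending only on $\ell$.

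Setting $(\beta_1,\ldots,\beta_k)=\mathbf{m}^T A$, the task reduces to producing an upper bound for $\lvert\sum_{n\leq N}e\left(\sum_j\beta_j\lf f_j(n)\rf\right)\rvert$. Applying Vaaler's polynomial approximation (Lemma~\ref{vaaler:indicator:function}) coordinate by coordinate, exactly as in the proof of Theorem~\ref{thm:single_heilbronn_set}, one partitions each $[0,1)$ into $q$ subintervals via $q$-adic digit expansion and expands the sum into a linear combination of
\[\sum_{n\leq N}e\left(\sum_{j=1}^k(\beta_j+h_j)f_j(n)\right),\]
where $(h_1,\ldots,h_k)\in\Z^k$ has components of size at most $H=N^\eta$.

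By the $\Q$-linear independence of $f_1,\ldots,f_k$, the function $F(x)=\sum_{j=1}^k(\beta_j+h_j)f_j(x)$ remains a genuine pseudo-polynomial whenever $(\beta_j+h_j)_{j=1}^k$ is non-trivial: no cancellation of its leading (pseudo)polynomial term occurs, and the leading coefficient has magnitude comparable to $\max_j\lvert\beta_j+h_j\rvert$. Hence Propositions~\ref{prop:largeintegers} or~\ref{prop:mediumintegers} (depending on which regime the dominant term of $F$ falls in) deliver an upper bound $N^{1-\gamma+\varepsilon}$, uniformly in the admissible $(h_j)$, with $\gamma>0$ depending only on $f_1,\ldots,f_k$. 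The degenerate case in which $\mathbf{m}^T A$ is within $N^{1-\rho-\deg f}$ of an integer point of $\R^k$ is handled by the same fallback as in the single-variable proof: Proposition~\ref{prop:largeintegers}, applied to a short initial segment $1\leq n\leq X$ with $X$ a small power of $N$, forces the sequence $A\lf\mathbf{f}(n)\rf$ to approach an element of $\Lambda$ within the unit ball, contradicting our standing assumption.

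Comparing upper and lower bounds forces a contradiction once $\theta_\ell$ is chosen sufficiently small relative to $\gamma$ and $\eta$. The main obstacle I expect is the lattice large sieve step and its interplay with the choice of $\theta_\ell$: the norm of $\mathbf{m}$ must be large enough that $(\beta_1,\ldots,\beta_k)=\mathbf{m}^T A$ falls in the ranges covered by Propositions~\ref{prop:largeintegers}/\ref{prop:mediumintegers}, yet small enough that the loss $N^{-C\theta_\ell}$ does not outweigh the saving $N^{-\gamma}$. Tracking these exponents carefully, together with the dependence on the shape of $\Lambda$ through its successive minima, is what determines the final admissible value of $\theta_\ell$.
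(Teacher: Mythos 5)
Your overall architecture matches the paper's: a lattice large-sieve lower bound (the paper uses Schmidt's Theorem 14A, Lemma~\ref{schmidt:thm14A}, which plays exactly the role you assign to a ``weighted lattice version'' of Lemma~\ref{baker:thm2.2}), removal of the floor functions by $q$-adic intervals and multidimensional Vaaler polynomials, reduction to sums $\sum_{n\leq N}e\bigl(\sum_j\beta_jf_j(n)\bigr)$, and a fallback argument (a joint multiple of $m$ among the $\lfloor f_i(n)\rfloor$ on a short segment) in the fully degenerate case. Up to that point your sketch is sound.

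The genuine gap is in the middle step, where you claim that $\Q$-linear independence makes $F(x)=\sum_j(\beta_j+h_j)f_j(x)$ ``a genuine pseudo-polynomial'' with leading coefficient comparable to $\max_j\lvert\beta_j+h_j\rvert$, so that Propositions~\ref{prop:largeintegers} or~\ref{prop:mediumintegers} apply uniformly. This is false as stated: the leading behaviour of $F$ is governed by the coefficient attached to the \emph{highest-degree} $f_j$ that appears, and nothing prevents that coefficient from being extremely small (or $m\xi_j$ from being extremely close to an integer) while a lower-degree coefficient is large. In that mixed regime the derivative conditions $\lvert F^{(r)}(x)\rvert\asymp GX^{-r}$ required by Lemma~\ref{bkmst:lem2.5}, and the coefficient ranges required by Propositions~\ref{prop:largeintegers} and~\ref{prop:mediumintegers} (which are stated for a single scalar $\xi$ multiplying one pseudo-polynomial), simply fail, so no uniform bound in the $(h_j)$ follows. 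This is precisely why the paper orders the $f_j$ by degree, defines thresholds $\gamma_1>\gamma_2>\cdots$ recursively, and, when the coefficients of the top-degree components are below their thresholds, splits those terms off by partial summation (the factor $\psi(n)=e\bigl(m\sum_{i>s}p_i\xi_if_i(n)\bigr)$ varies slowly, $\lvert\psi(n+1)-\psi(n)\rvert\ll N^{\gamma_s/2-1}$) and only then estimates the remaining sum over $i\leq s$. Your dichotomy ``nontrivial coefficient vector versus totally degenerate vector'' omits this intermediate case entirely, and without it the comparison of upper and lower bounds does not close; the rest of your exponent bookkeeping (choice of $\theta_\ell$ against $\gamma$ and $\eta$) is fine in spirit but rests on this missing step.
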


The role of the lower bound similar to Lemma \ref{baker:thm2.2} is
played by the following multidimensional counterpart.

\begin{lem}[{\cite[Theorem
    14A]{schmidt1977:small_fractional_parts}}] \label{schmidt:thm14A}
  Suppose $\Lambda$ is a lattice of full rank in $\R^\ell$ such that
  $\Lambda\cap B_\ell=\{\mathbf{0}\}$, where $B_\ell$ denotes the
  Euclidean unit ball in $\R^\ell$. Suppose that
  $\mathbf{x}_1,\ldots,\mathbf{x}_N\in\R^\ell$ are not in
  $\Lambda+B_\ell$. Let $\varepsilon>0$ and
  \[S_{\mathbf{p}}=\sum_{n=1}^N e(\mathbf{x}_n\cdot\mathbf{p}).\]
  Then, provided $N$ is sufficiently large in terms of $\varepsilon$,
  there is a point $\mathbf{p}$ in a basis of the dual
  lattice $\Pi$ of $\Lambda$ such that
  $\lvert\mathbf{p}\rvert\leq N^\varepsilon$ and an integer
  $1\leq t\leq\frac{N^\varepsilon}{\lvert \mathbf{p}\rvert}$ such that
\[\lvert S_{t\mathbf{p}}\rvert\geq N^{1-\varepsilon}\lvert\det(\Lambda)\rvert^{-1}.\]
\end{lem}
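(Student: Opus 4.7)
\textbf{Proof outline for Theorem \ref{thm:multiple_lattice_approximation}.}

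The plan is to transport the argument of Theorem \ref{thm:single_heilbronn_set} to the multidimensional setting, replacing Lemma \ref{baker:thm2.2} by Schmidt's Lemma \ref{schmidt:thm14A}. Suppose for contradiction that
\[\mathbf{x}_n := A\left(\lf f_1(n)\rf,\ldots,\lf f_k(n)\rf\right)^{\mathrm{T}}\notin \Lambda+B_\ell\qquad(1\le n\le N).\]
(We may assume $\Lambda\cap B_\ell=\{\mathbf{0}\}$, otherwise the conclusion is immediate.) Lemma \ref{schmidt:thm14A} then provides $\mathbf{p}$ in a basis of the dual lattice $\Pi$ with $\lvert\mathbf{p}\rvert\le N^\varepsilon$ and an integer $1\le t\le N^\varepsilon/\lvert\mathbf{p}\rvert$ such that, writing $\xi_j:=t\sum_{i=1}^\ell p_i a_{ij}$,
\[\lvert\sum_{n=1}^N e\left(\sum_{j=1}^k \xi_j\lf f_j(n)\rf\right)\rvert\gg N^{1-\varepsilon}\lvert\det(\Lambda)\rvert^{-1}\ge N^{1-\varepsilon-\theta_\ell}.\]
The goal is to bound the left-hand side by $N^{1-\gamma}$ for some $\gamma=\gamma(f_1,\ldots,f_k,\ell)>0$, then pick $\theta_\ell<\gamma-\varepsilon$.

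\textbf{Removing the floor functions.} We use the Vaaler approximation (Lemma \ref{vaaler:indicator:function}) coordinate-wise in each of the $k$ fractional parts $\{f_j(n)\}$. Partitioning $[0,1)$ into $q$-adic intervals $I_{d_j}$ and replacing each $\chi_{I_{d_j}}$ by $\chi^*_{I_{d_j},H}$, then expanding the resulting $k$-fold product and summing over $(d_1,\ldots,d_k)\in\{0,\ldots,q-1\}^k$, the original sum becomes a linear combination with total mass $\ll(\log H)^k$ of exponential sums
\[\sum_{n\le N}e\left(\sum_{j=1}^k \beta_j f_j(n)\right),\qquad \beta_j=\xi_j+h_j,\quad\lvert h_j\rvert\le H,\]
together with further error terms that are themselves exponential sums of the same type. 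Choosing $q=H=N^{\gamma/(2k)}$ makes the combined error admissible provided the sums above satisfy the desired bound.

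\textbf{Estimating the combined sum.} Set $F_\beta(x):=\sum_{j=1}^k\beta_j f_j(x)$. The map $\beta\mapsto F_\beta$ is linear; since $\Q$-linear independence of $f_1,\ldots,f_k$ is equivalent to $\R$-linear independence (the rank of the coefficient matrix is invariant under the extension $\Q\subset\R$), this map is injective and hence there is a constant $c>0$, depending only on the $f_j$, with $\max_\theta\lvert\textrm{coeff of }x^\theta\textrm{ in }F_\beta\rvert\ge c\max_j\lvert\beta_j\rvert$. A case distinction on which exponent dominates and how its coefficient compares with the thresholds $N^{3\rho-k}$, $N^{\rho-\theta_r}$, $N^{1/10}$ then lets us apply Proposition \ref{prop:largeintegers} in the large range and Proposition \ref{prop:mediumintegers} in the medium range. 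Each case yields $\lvert\sum_{n\le N}e(F_\beta(n))\rvert\ll N^{1-\gamma'}$ with $\gamma'=\gamma'(f_1,\ldots,f_k)>0$, which upon summing over the $(2H+1)^k$ shifts gives the required upper bound.

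\textbf{Small-coefficient case and main obstacle.} The main obstacle, exactly as in the proof of Theorem \ref{thm:single_heilbronn_set}, is the regime in which every $F_\beta$ has its dominant coefficient below $N^{3\rho-k}$, placing it out of reach of Propositions \ref{prop:largeintegers} and \ref{prop:mediumintegers}. Here we adapt the \emph{multiple of $m$} construction: for a short window $[1,X]\subset[1,N]$ with $X=M^{1/(1-\rho)}$ and $M=\lf N^{\theta_\ell}\rf$, a $k$-fold Vaaler count whose error is controlled by Proposition \ref{prop:largeintegers} produces $n\in[1,X]$ at which each $\lf f_j(n)\rf$ is simultaneously divisible by the integer denominators implicit in the coordinates of $\mathbf{p}$. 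This forces $A\vec F(n)$ into $\Lambda+B_\ell$, contradicting the standing assumption. Balancing the three regimes then defines $\theta_\ell=\theta_\ell(f_1,\ldots,f_k,\ell)$. The hard part is to make the small-coefficient counting uniform in both $A$ and $\Lambda$: simultaneous divisibility of $k$ floor functions on a short range must be established for every admissible basis vector $\mathbf{p}$ of the dual lattice $\Pi$, which is a genuinely multidimensional phenomenon not present in the one-dimensional argument.
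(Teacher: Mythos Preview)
Your write-up does not address the statement you were given. Lemma \ref{schmidt:thm14A} is quoted from Schmidt \cite{schmidt1977:small_fractional_parts} and carries no proof in the paper; what you have actually sketched is an argument for Theorem \ref{thm:multiple_lattice_approximation}, which \emph{uses} that lemma as a black box. If the task was to prove Schmidt's lemma itself, nothing in your outline bears on it.

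Comparing your outline to the paper's own proof of Theorem \ref{thm:multiple_lattice_approximation}, the overall shape agrees (apply Schmidt's lemma, strip the floors via a $k$-fold Vaaler approximation, estimate the resulting exponential sums, and in the residual regime produce a joint multiple on a short range). But your ``Estimating the combined sum'' step contains a genuine gap. The claim that $\Q$-linear independence of $f_1,\ldots,f_k$ is equivalent to $\R$-linear independence is false: take $f_1(x)=x^{\pi}$ and $f_2(x)=\sqrt{2}\,x^{\pi}$, which are $\Q$-linearly independent but $\R$-linearly dependent. Your parenthetical about rank under the extension $\Q\subset\R$ only applies when the coefficient matrix has rational entries, which pseudo-polynomials need not have. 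Hence the inequality $\max_\theta\lvert\text{coeff of }x^\theta\text{ in }F_\beta\rvert\ge c\max_j\lvert\beta_j\rvert$ can fail outright. Even when it holds, knowing that \emph{some} coefficient of $F_\beta$ is large does not place you in the range of Propositions \ref{prop:largeintegers} or \ref{prop:mediumintegers}: those estimates are driven by the dominant term, and a large lower-order coefficient paired with a tiny leading one is not covered.

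The paper avoids this by first ordering the $f_j$ so that $\deg f_1<\cdots<\deg f_k$ and then building a decreasing chain of thresholds $\gamma_0>\gamma_1>\cdots>\gamma_k$ recursively. One locates the largest index $s$ with $\lVert m\xi_s\rVert>N^{\gamma_{s-1}/2-\deg f_s}$, applies the exponential-sum propositions to $\sum_{i\le s}\beta_i f_i$ (whose top term now has a controlled coefficient), and peels off the higher-degree part $\sum_{i>s}\beta_i f_i$ by partial summation, using that its coefficients are small enough to make $\psi(n+1)-\psi(n)\ll N^{\gamma_s/2-1}$. This recursive partial-summation device is the paper's substitute for the injectivity argument you attempted, and it is what makes the case analysis close.
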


\begin{proof}[Proof of Theorem \ref{thm:multiple_lattice_approximation}]
  As in the one-dimensional case we use Lemma \ref{schmidt:thm14A} to
  transform the problem into an estimation of an exponential sum. In
  particular, suppose that $\mathbf{x}_1,\ldots,\mathbf{x}_N$ are not
  in $\Lambda+B_\ell$. Then by Lemma \ref{schmidt:thm14A} there exists
  $\mathbf{p}$ with $\lvert\mathbf{p}\rvert\leq N^{\varepsilon}$ and
  an integer $1\leq m\leq\frac{N^\varepsilon}{\lvert\mathbf{p}\rvert}$
  such that
  \begin{gather}\sum_{n=1}^N e\left(mp_1\xi_1\lfloor
    f_1(n)\rfloor+\cdots+mp_k\xi_k\lfloor f_k\rfloor\right)\geq
  N^{1-\varepsilon}\det(\Lambda)^{-1}.
  \end{gather}
  Similar to above we derive an estimate for the exponential sum
  contradicting this lower bound.

  We start by getting rid of the floor function.
  %
  %
  Let $q\geq2$
  be an integer chosen later. Again we denote by $I_{d}$ the interval
  of all reals in $[0,1]$ whose $q$-adic expansion starts with $d$,
  \textit{i.e.}
  \[I_d:=\left[\frac dq,\frac{d+1}q\right).\]
  Then if $\{f_i(n)\}\in I_d$ there exists $0\leq \vartheta<1$ such that
  $\{f_i(n)\}=\frac dq+\frac \vartheta q$. Thus
  \[e(mp_i\xi_i\lf f_i(n)\rf)
  = e\left(mp_i\xi_if_i(n)-mp_i\xi_i\frac
    dq\right)\left(1+\mathcal{O}\left(\frac1q\right)\right).\] Hence
  \[\lvert\sum_{n\leq t}e\left(m\sum_{i=1}^kp_i\xi_i\lf f_i(n)\rf
      \right)\rvert
    =\sum_{d_1=0}^{q-1}\cdots\sum_{d_k=0}^{q-1}\lvert\sum_{n\leq t}
    e\left(m\sum_{i=1}^kp_i\xi_if_i(n)\right)\prod_{j=1}^k\chi_{I_{d_j}}(f_j(n))\rvert.
  \]

  As above we want to approximate the occurring indicator function by
  suitable functions. We follow Grabner
  \cite{grabner1989:harmonische_analyse_gleichverteilung}
  (\textit{cf.} Section 1.2.2 of Drmota and Tichy
  \cite{drmota_tichy1997:sequences_discrepancies_and}) who considered
  multidimensional variants of Vaaler polynomials. We fix a vector of
  digits $\mathbf{d}=(d_1,\ldots,d_k)$.  Thus
  \begin{multline}\label{applied:vaaler}
    \lvert\sum_{n\leq N}
    e\left(m\xi_1 f_1(n)+\cdots+m\xi_kf_k(n)\right)\prod_{j=1}^k\chi_{I_{d_j}}(f_j(n))\rvert\\
    \leq\lvert\sum_{n\leq N} e\left(m\xi_1
      f_1(n)+\cdots+m\xi_kf_k(n)\right)\prod_{j=1}^k\chi^*_{I_{d_j},H}(f_j(n))
    \rvert+R(H),
  \end{multline}
  where
  \begin{gather}\label{mani:R}
    R(H)=\sum_{n\leq N}\lvert\prod_{j=1}^k\chi_{I_{d_j}}(f_j(n))-
    \prod_{j=1}^k\chi^*_{I_{d_j},H}(f_j(n))\rvert.
  \end{gather}
  We start estimating $R(H)$. Using the inequality
  \[
  \lvert\prod_{j=1}^kb_j-\prod_{j=1}^ka_j\rvert
  \leq\sum_{\emptyset\neq J\subset\{1,\ldots,k\}}\prod_{j\not\in
    J}\lvert a_j\rvert \prod_{j\in J}\lvert b_j-a_j\rvert
  \]
  we get
  \begin{align*}
    \lvert\prod_{j=1}^k\chi_{I_j}(f_j(n))
    - \prod_{j=1}^k\chi^*_{I_j,H}(f_j(n))\rvert
    &\leq\sum_{\emptyset\neq J\subset\{1,\ldots,k\}}\prod_{j\in J}\lvert
      \chi_{I_j}-\chi^*_{I_j,H}\rvert\\
    &=\prod_{j=1}^k\left(1+\lvert
      \chi_{I_j}-\chi^*_{I_j,H}\rvert\right)-1.
  \end{align*}
  Plugging this into (\ref{mani:R}) together with Lemma
  \ref{vaaler:indicator:function} yields
  \begin{align*}
    &R(H)=\sum_{n\leq N}\lvert\prod_{j=1}^k\chi_{I_{d_j}}(f_j(n))-
      \prod_{j=1}^k\chi^*_{I_{d_j},H}(f_j(n))\rvert\\
    &\leq
      \sum_{n\leq N}\left(\prod_{j=1}^k\left(1+\lvert
      \chi_{I_j}-\chi^*_{I_j,H}\rvert\right)-1\right)\\
    &\leq
      \sum_{n\leq N}\left(\prod_{j=1}^k\left(1+\frac1{H+1}+\frac1{H+1}\sum_{1\leq\lvert
      h_j\rvert\leq H} C_{h_j}\left(1-\frac{\lvert
      h_j\rvert}{H+1}\right)e\left(h_jf_j(n)\right)\right)-1\right)\\
    &=N\left(\left(1+\frac1{H+1}\right)^k-1\right)
      +\sum_{0<\lVert\mathbf{h}\rVert_\infty\leq
      H}\left(\frac1{H+1}\right)^{k-\delta(\mathbf{h})}
      \left(1+\frac1{H+1}\right)^{\delta(\mathbf{h})}\lvert\sum_{n\leq N}e(\mathbf{h}\cdot\mathbf{f}(n))\rvert,
\end{align*}
where $\delta(\mathbf{h})=\sum_{j=1}^k\delta_{h_j0}$ counts the number of coordinates of
$\mathbf{h}=(h_1,\ldots,h_k)$ which are zero; $\lVert
\mathbf{h}\rVert_\infty=\max\{\lvert h_1\rvert,\ldots,\lvert h_k\rvert\}$.

Now we consider the first part of (\ref{applied:vaaler}). Again using
Lemma \ref{vaaler:indicator:function} we have
\begin{align*}
&\lvert\sum_{n\leq N}
  e\left(m\xi_1 f_1(n)+\cdots+m\xi_kf_k(n)\right)\prod_{j=1}^s\chi^*_{I_{d_j},H}(f_j(n))\rvert\\
&\quad=\lvert\sum_{n\leq N}
  e\left(m\xi_1
    f_1(n)+\cdots+m\xi_kf_k(n)\right)\prod_{j=1}^s\left(\frac1q+\frac1\pi\sum_{1\leq
      \lvert h_j\rvert\leq H}\frac{a_H(h_j)}{\lvert h_j\rvert}
    e(h_jf_j(n))\right)\rvert\\
&\quad\leq\frac1{q^s}\lvert\sum_{n\leq N}
  e\left(m\pmb{\xi}\cdot\mathbf{f}(n)\right)\rvert+
  \sum_{0<\lVert\mathbf{h}\rVert_\infty\leq H}\frac1{r(\pi\mathbf{h})}\lvert\sum_{n\leq N}
    e((m\pmb{\xi}+\mathbf{h})\cdot\mathbf{f}(n))\rvert,
\end{align*}
where $\pmb{\xi}=(\xi_1,\ldots,\xi_k)$ and
$\mathbf{f}(n)=(f_1(n),\ldots,f_k(n))$.

The occurring exponential sums are of the form
\[\sum_{n\leq N}e(\mathbf{h}\cdot\mathbf{f}(n)),\quad
  \sum_{n\leq N}
  e((m\pmb{\xi}+\mathbf{h})\cdot\mathbf{f}(n))\quad\text{or}\quad
  \sum_{n\leq N}
  e((m\pmb{\xi}+\mathbf{h})\cdot\mathbf{f}(n)).\]
We consider these three sums simultaneously and denote them by
\[\sum_{n\leq N}e\left(\sum_{i=1}^k\beta_if_i(n)\right).\]

By reordering if necessary we may suppose that
$\deg f_1<\deg f_2<\cdots<\deg f_k$. We split each
$f_i(n)=g_i(n)+P_i(n)$ where $P_i$ is a polynomial of degree $k_i$ and
$g_i(n)=\sum_{j=1}^r\alpha_jx^{\theta_{i,j}}$ with
$1<\theta_{i,1}<\cdots<\theta_{i,r}$ and $\theta_{i,j}\not\in\Z$ for
$j=1,\ldots,r$.

Let $\gamma_0=6\rho$ and suppose that
$\lVert\beta_1\rVert>N^{\gamma_0/2-\deg f_1}$. Then an application of
Proposition \ref{prop:largeintegers} and Proposition
\ref{lem:mediumintegers}, respectively, yields
\begin{align*}
  \sum_{n\leq N}e\left(\beta_1f_1(n)\right)\ll N^{1-\gamma_1},
\end{align*}
where
\[\gamma_1=\min\left(\rho2^{1-k_k},(8KL-4K)^{-1}\right).\]
Now we recursively define $\gamma_j$ for $2\leq j\leq k$ as
follows. Suppose that
$\lVert\beta_{j}\rVert>N^{\gamma_{j-1}/2-\deg f_{j}}$, then by
Proposition \ref{prop:largeintegers} and Proposition
\ref{lem:mediumintegers}, respectively, there exists $\gamma_j>0$ such
that
\[\sum_{n\leq N}e\left(\sum_{i=1}^{j}\beta_if_i(n)\right)
  \ll N^{1-\gamma_{j}}.\]

For the moment we suppose that there exists $1\leq s\leq k$ such that
$\lVert m\xi_j\rVert\leq N^{\gamma_{j-1}/2-\deg_j}$ for $s<j\leq k$ and
$\lVert m\xi_s\rVert>N^{\gamma_s-\deg f_s}$. Using partial summation
we split those $j>s$ apart. To this end we set
\begin{align*}
  \phi(n)=e\left(m\sum_{i=1}^sp_i\xi_i\lfloor f_i(n)\rfloor\right)
  \quad\text{and}\quad
  \psi(n)=e\left(m\sum_{i=s+1}^kp_{i}\xi_{i}f_{i}(n)\right).
\end{align*}
Since $\gamma_{j-1}>\gamma_{j}$ for $s< j\leq k$ we get
\[\lvert\psi(n+1)-\psi(n)\rvert\ll \sum_{i=s+1}^k N^{\gamma_{i-1}/2-\deg
    f_i}N^{\deg f_i-1}\ll N^{\gamma_{s}/2-1}.\] Thus
\begin{align*}
  \sum_{n\leq N}\phi(n)\psi(n)
  \leq \psi(N)\sum_{n\leq N}\phi(n)
  +N^{\gamma_{s}/2-1}\sum_{t\leq N}\sum_{n\leq t}\phi(n).
\end{align*}
By the definition of $\gamma_s$ we have
\[\sum_{n\leq N}\phi(n)=\sum_{n\leq
    N}e\left(\sum_{i=1}^s\beta_if_i(n)\right) \ll N^{1-\gamma_s}.\]
Putting everything together we get that 
that
\[\sum_{n\leq N}e\left(\sum_{i=1}^kmp_i\xi_i \lf f_i(n)\rf\right)\ll
  N^{1-\gamma_s/2}.\]
Thus contradicting the lower bound for $\eta>\gamma_s/2$.

Now we return to the case that there is no $1\leq s\leq k$ such that
$\lVert m\xi_s\rVert\geq N^{\gamma_s-\deg f_s}$. Similar to the
one-dimensional case we may not apply the indicator
function. Therefore we show that there is a joint multiple of $m$
directly contradicting that there is no element near a point of
$\Lambda$. To this end let
\begin{align*}
  f_1(n)&=a_{1,\ell}m^\ell+\cdots+a_{1,1}m+a_{1,0}+a_{1,-1}m^{-1}+\cdots\\
  \vdots& \quad\vdots\\
  f_s(n)&=a_{s,\ell}m^\ell+\cdots+a_{s,1}m+a_{s,0}+a_{s,-1}m^{-1}+\cdots
\end{align*}   
Then $m\mid \lf f_i(n)\rf$ for $i=1,\ldots,k$ if and only if
$a_{i,0}=0$ for $i=1,\ldots,k$ if and only if
$m^{-1}f_i(n)\in\left[0,\frac1m\right)$ for $i=1,\ldots,k$.

Let $\mathcal{N}(\mathbf{f},X)$ denote the number of $1\leq n\leq X$
such that $\lvert f_i(n)\rvert$ is a multiple of $m$ for
$1\leq i\leq k$. As above $X$ will be a power of $M$ and thus of
$N$. If $\chi$ denotes the indicator function of the interval
$\left[0,\frac1m\right)$, then an application of Vaaler
polynomials~\ref{vaaler:indicator:function} yields
\begin{align*}
  &\lvert \mathcal{N}(\mathbf{f},X)-\frac{X}{m^k}\rvert\\
  &\leq \sum_{1\leq n\leq
    X}\lvert\prod_{i=1}^k\chi\left(\frac{f_i(n)}{m}\right)
      -\prod_{i=1}^k\chi^*\left(\frac{f_i(n)}{m}\right)\rvert
    +\sum_{1\leq n\leq
    X}\lvert\prod_{i=1}^k\chi^*\left(\frac{f_i(n)}{m}\right)-\frac{1}{m^k}\rvert\\
  &\leq N\left(\left(1+\frac1{H+1}\right)^k-1\right)
    +  \sum_{0<\lVert\mathbf{h}\rVert_\infty\leq H}\frac1{r(\pi\mathbf{h})}\lvert\sum_{n\leq X}
    e\left(\frac1m\sum_{i}h_if_i(n)\right)\rvert\\
  &\quad  +\sum_{0<\lVert\mathbf{h}\rVert_\infty\leq
      H}\left(\frac1{H+1}\right)^{k-\beta(\mathbf{h})}
      \left(1+\frac1{H+1}\right)^{\beta(\mathbf{h})}\lvert\sum_{n\leq
    X}e\left(\frac1m\sum_{i}h_i f_i(n)\right)\rvert.
\end{align*}
Setting $X=M^{\frac1{1-\rho}}$ and $H=X^{\frac1{10}}$ we get by the
same reasoning as in the one-dimensional case that for sufficiently
large $X$ we have $\mathcal{N}(\mathbf{f},X)\geq1$.  Thus similar to
the one-dimensional case there is a $1\leq n\leq N$ such that
$\lVert \xi_i\lf f_i(n)\rf\rVert$ is very close to an
integer. Multiplying by the primitive vector $\mathbf{p}$ we get that
this is very close to a point in the lattice $\Lambda$ violating the
assumption.
\end{proof}

\begin{proof}
  The proof of Theorem \ref{thm:multiple_prime_heilbronn_set} follows
  along the same lines as the proof of
  Theorem~\ref{thm:multiple_lattice_approximation}. The only change is
  in the exponentials sums which run over the primes and the
  corresponding estimates.
\end{proof}

\section*{Final remarks}
In the present paper we have considered one of the many examples of
van der Corput sets provided in Section \ref{sec:van-der-corput}. Each
of these examples lead to different exponential sums whose treatment
is interesting on their own. In the vain of Bergelson \textit{et al.}
\cite{bergelson_kolesnik_madritsch+2014:uniform_distribution_prime},
where mixtures of polynomials and pseudo-polynomials were considered,
similar results should hold for Heilbronn sets. For example, let $f$
be a polynomial with real coefficients. Then we suppose that one can
prove the existence of an $\eta>0$ such that
\[\min_{1\leq n\leq N}\lVert\xi\lfloor f(n)\rfloor\rVert\ll
N^{-\eta}\]
for any given $\xi\in\R$ and any given $N\in\N$. Similar statements
should be true for the sequence $\lfloor f(p)\rfloor$ and
multi-dimensional variates thereof.

\section*{Acknowledgment}

Parts of this research work were done when the first author was a
visiting professor at the Department of Analysis and Number Theory,
Graz University of Technology. The author thanks this institution and
the doctoral school ``Discrete Mathematics'' funded by
FWF. Furthermore parts of the present paper were worked out when the
authors were at the conference ``Normal numbers: Arithmetic,
Computational and Probabilistic Aspects'' at the Erwin Schrödinger
Institute in Vienna. They thank the institute for its hospitality
and facilities.

For the realization of the present paper the first author received
support from the Conseil R\'egional de Lorraine. The second author
acknowledges support of the project F 5510-N26 within the special
research area ``Quasi Monte-Carlo Methods and applications'' funded
by FWF (Austrian Science Fund).

\section*{Acknowledgment to the referee}

We are grateful to an anonymous referee for his careful reading of the
first version of the present paper.


\begin{bibdiv}
\begin{biblist}

\bib{baker1986:diophantine_inequalities}{book}{
      author={Baker, R.~C.},
       title={Diophantine inequalities},
      series={London Mathematical Society Monographs. New Series},
   publisher={The Clarendon Press, Oxford University Press, New York},
        date={1986},
      volume={1},
        ISBN={0-19-853545-7},
        note={Oxford Science Publications},
      review={\MR{865981}},
}

\bib{baker_kolesnik1985:distribution_p_alpha}{article}{
      author={Baker, R.~C.},
      author={Kolesnik, G.},
       title={On the distribution of {$p^\alpha$} modulo one},
        date={1985},
        ISSN={0075-4102},
     journal={J. Reine Angew. Math.},
      volume={356},
       pages={174\ndash 193},
      review={\MR{779381 (86m:11053)}},
}

\bib{baker2016:small_fractional_parts}{article}{
      author={Baker, Roger},
       title={Small fractional parts of polynomials},
        date={2016},
        ISSN={0208-6573},
     journal={Funct. Approx. Comment. Math.},
      volume={55},
      number={1},
       pages={131\ndash 137},
         url={https://doi.org/10.7169/facm/2016.55.1.9},
      review={\MR{3549017}},
}

\bib{baker2017:fractional_parts_of}{article}{
      author={Baker, Roger},
       title={F{RACTIONAL} {PARTS} {OF} {POLYNOMIALS} {OVER} {THE} {PRIMES}},
        date={2017},
        ISSN={0025-5793},
     journal={Mathematika},
      volume={63},
      number={3},
       pages={715\ndash 733},
         url={https://doi.org/10.1112/S0025579317000158},
      review={\MR{3731301}},
}

\bib{bergelson_kolesnik_son2015:uniform_distribution_subpolynomial}{article}{
      author={{Bergelson}, V.},
      author={{Kolesnik}, G.},
      author={{Son}, Y.},
       title={{Uniform distribution of subpolynomial functions along primes and
  applications}},
        date={2015-03},
     journal={ArXiv e-prints},
      eprint={1503.04960},
}

\bib{bergelson_leibman_lesigne2008:intersective_polynomials_and}{article}{
      author={Bergelson, V.},
      author={Leibman, A.},
      author={Lesigne, E.},
       title={Intersective polynomials and the polynomial {S}zemer\'edi
  theorem},
        date={2008},
        ISSN={0001-8708},
     journal={Adv. Math.},
      volume={219},
      number={1},
       pages={369\ndash 388},
         url={http://dx.doi.org/10.1016/j.aim.2008.05.008},
      review={\MR{2435427}},
}

\bib{bergelson_kolesnik_madritsch+2014:uniform_distribution_prime}{article}{
      author={Bergelson, Vitaly},
      author={Kolesnik, Grigori},
      author={Madritsch, Manfred},
      author={Son, Younghwan},
      author={Tichy, Robert},
       title={Uniform distribution of prime powers and sets of recurrence and
  van der {C}orput sets in {$\Bbb{Z}^k$}},
        date={2014},
        ISSN={0021-2172},
     journal={Israel J. Math.},
      volume={201},
      number={2},
       pages={729\ndash 760},
         url={http://dx.doi.org/10.1007/s11856-014-1049-4},
      review={\MR{3265301}},
}

\bib{bergelson_lesigne2008:van_der_corput}{article}{
      author={Bergelson, Vitaly},
      author={Lesigne, Emmanuel},
       title={Van der {C}orput sets in {$\Bbb Z^d$}},
        date={2008},
        ISSN={0010-1354},
     journal={Colloq. Math.},
      volume={110},
      number={1},
       pages={1\ndash 49},
         url={http://dx.doi.org/10.4064/cm110-1-1},
      review={\MR{2353898 (2008j:11089)}},
}

\bib{bergelson_moreira2016:van_der_corputs}{article}{
      author={Bergelson, Vitaly},
      author={Moreira, Joel},
       title={Van der {C}orput's difference theorem: some modern developments},
        date={2016},
        ISSN={0019-3577},
     journal={Indag. Math. (N.S.)},
      volume={27},
      number={2},
       pages={437\ndash 479},
         url={http://dx.doi.org/10.1016/j.indag.2015.10.014},
      review={\MR{3479166}},
}

\bib{boshernitzan_kolesnik_quas+2005:ergodic_averaging_sequences}{article}{
      author={Boshernitzan, Michael},
      author={Kolesnik, Grigori},
      author={Quas, Anthony},
      author={Wierdl, M{\'a}t{\'e}},
       title={Ergodic averaging sequences},
        date={2005},
        ISSN={0021-7670},
     journal={J. Anal. Math.},
      volume={95},
       pages={63\ndash 103},
         url={http://dx.doi.org/10.1007/BF02791497},
      review={\MR{2145587 (2006b:37011)}},
}

\bib{bourgain1987:ruzsas_problem_on}{article}{
      author={Bourgain, J.},
       title={Ruzsa's problem on sets of recurrence},
        date={1987},
        ISSN={0021-2172},
     journal={Israel J. Math.},
      volume={59},
      number={2},
       pages={150\ndash 166},
         url={http://dx.doi.org/10.1007/BF02787258},
      review={\MR{920079 (89d:11012)}},
}

\bib{bourgain_demeter_guth2016:proof_main_conjecture}{article}{
      author={Bourgain, Jean},
      author={Demeter, Ciprian},
      author={Guth, Larry},
       title={Proof of the main conjecture in {V}inogradov's mean value theorem
  for degrees higher than three},
        date={2016},
        ISSN={0003-486X},
     journal={Ann. of Math. (2)},
      volume={184},
      number={2},
       pages={633\ndash 682},
         url={https://doi.org/10.4007/annals.2016.184.2.7},
      review={\MR{3548534}},
}

\bib{cook1972:fractional_parts_set}{article}{
      author={Cook, R.~J.},
       title={On the fractional parts of a set of points},
        date={1972},
        ISSN={0025-5793},
     journal={Mathematika},
      volume={19},
       pages={63\ndash 68},
      review={\MR{0330060}},
}

\bib{cook1973:fractional_parts_set}{article}{
      author={Cook, R.~J.},
       title={On the fractional parts of a set of points. {II}},
        date={1973},
        ISSN={0030-8730},
     journal={Pacific J. Math.},
      volume={45},
       pages={81\ndash 85},
      review={\MR{0330064}},
}

\bib{cook1976:diophantine_inequalities_with}{article}{
      author={Cook, R.~J.},
       title={Diophantine inequalities with mixed powers {$({\rm mod}$}
  {$1)$}},
        date={1976},
        ISSN={0002-9939},
     journal={Proc. Amer. Math. Soc.},
      volume={57},
      number={1},
       pages={29\ndash 34},
      review={\MR{0401647 (53 \#5474)}},
}

\bib{danicic1959:fractional_parts_heta}{article}{
      author={Danicic, I.},
       title={On the fractional parts of {$\theta x^{2}$} and {$\phi x^{2}$}},
        date={1959},
        ISSN={0024-6107},
     journal={J. London Math. Soc.},
      volume={34},
       pages={353\ndash 357},
      review={\MR{0166160}},
}

\bib{davenport1967:theorem_heilbronn}{article}{
      author={Davenport, H.},
       title={On a theorem of {H}eilbronn},
        date={1967},
        ISSN={0033-5606},
     journal={Quart. J. Math. Oxford Ser. (2)},
      volume={18},
       pages={339\ndash 344},
      review={\MR{0223307 (36 \#6355)}},
}

\bib{drmota_tichy1997:sequences_discrepancies_and}{book}{
      author={Drmota, M.},
      author={Tichy, R.~F.},
       title={Sequences, discrepancies and applications},
      series={Lecture Notes in Mathematics},
   publisher={Springer-Verlag},
     address={Berlin},
        date={1997},
      volume={1651},
        ISBN={3-540-62606-9},
      review={\MR{MR1470456 (98j:11057)}},
}

\bib{furstenberg1977:ergodic_behavior_diagonal}{article}{
      author={Furstenberg, Harry},
       title={Ergodic behavior of diagonal measures and a theorem of
  {S}zemer\'edi on arithmetic progressions},
        date={1977},
        ISSN={0021-7670},
     journal={J. Analyse Math.},
      volume={31},
       pages={204\ndash 256},
      review={\MR{0498471 (58 \#16583)}},
}

\bib{grabner1989:harmonische_analyse_gleichverteilung}{thesis}{
      author={Grabner, Peter~J.},
       title={{H}armonische {A}nalyse, {G}leichverteilung und
  {Z}iffernentwicklungen},
        type={Ph.D. Thesis},
        date={1989},
}

\bib{graham_kolesnik1991:van_der_corputs}{book}{
      author={Graham, S.~W.},
      author={Kolesnik, G.},
       title={van der {C}orput's method of exponential sums},
      series={London Mathematical Society Lecture Note Series},
   publisher={Cambridge University Press},
     address={Cambridge},
        date={1991},
      volume={126},
        ISBN={0-521-33927-8},
      review={\MR{MR1145488 (92k:11082)}},
}

\bib{hardy_littlewood1914:some_problems_diophantine1}{article}{
      author={Hardy, G.~H.},
      author={Littlewood, J.~E.},
       title={Some problems of diophantine approximation {P}art {I}. {T}he
  fractional part of $n^k\theta$},
        date={1914},
        ISSN={0001-5962},
     journal={Acta Math.},
      volume={37},
      number={1},
       pages={155\ndash 191},
         url={http://dx.doi.org/10.1007/BF02401833},
      review={\MR{1555098}},
}

\bib{harman1981:trigonometric_sums_over}{article}{
      author={Harman, Glyn},
       title={Trigonometric sums over primes. {I}},
        date={1981},
        ISSN={0025-5793},
     journal={Mathematika},
      volume={28},
      number={2},
       pages={249\ndash 254 (1982)},
         url={http://dx.doi.org/10.1112/S0025579300010305},
      review={\MR{645105 (83j:10045)}},
}

\bib{heilbronn1948:distribution_sequence_n}{article}{
      author={Heilbronn, H.},
       title={On the distribution of the sequence {$n^2\theta ({\rm mod} 1)$}},
        date={1948},
        ISSN={0033-5606},
     journal={Quart. J. Math., Oxford Ser.},
      volume={19},
       pages={249\ndash 256},
      review={\MR{0027294 (10,284c)}},
}

\bib{kamae_mendes1978:van_der_corputs}{article}{
      author={Kamae, T.},
      author={Mend{\`e}s~France, M.},
       title={{V}an der {C}orput's difference theorem},
        date={1978},
        ISSN={0021-2172},
     journal={Israel J. Math.},
      volume={31},
      number={3-4},
       pages={335\ndash 342},
         url={http://dx.doi.org/10.1007/BF02761498},
      review={\MR{516154 (80a:10070)}},
}

\bib{kirschenhofer_tichy1981:uniform_distribution_double}{article}{
      author={Kirschenhofer, Peter},
      author={Tichy, Robert~F.},
       title={On uniform distribution of double sequences},
        date={1981},
        ISSN={0025-2611},
     journal={Manuscripta Math.},
      volume={35},
      number={1-2},
       pages={195\ndash 207},
         url={http://dx.doi.org/10.1007/BF01168456},
      review={\MR{627933}},
}

\bib{kuipers_niederreiter1974:uniform_distribution_sequences}{book}{
      author={Kuipers, L.},
      author={Niederreiter, H.},
       title={Uniform distribution of sequences},
   publisher={Wiley-Interscience [John Wiley \& Sons]},
     address={New York},
        date={1974},
        note={Pure and Applied Mathematics},
      review={\MR{MR0419394 (54 \#7415)}},
}

\bib{le2014:problems_and_results}{incollection}{
      author={L{\^e}, Th{\'a}i~Ho{\`a}ng},
       title={Problems and results on intersective sets},
        date={2014},
   booktitle={Combinatorial and additive number theory---{CANT} 2011 and 2012},
      series={Springer Proc. Math. Stat.},
      volume={101},
   publisher={Springer, New York},
       pages={115\ndash 128},
         url={http://dx.doi.org/10.1007/978-1-4939-1601-6_9},
      review={\MR{3297075}},
}

\bib{le_spencer2014:intersective_polynomials_and}{article}{
      author={L{\^e}, Th{\'a}i~Ho{\`a}ng},
      author={Spencer, Craig~V.},
       title={Intersective polynomials and {D}iophantine approximation},
        date={2014},
        ISSN={1073-7928},
     journal={Int. Math. Res. Not. IMRN},
      number={5},
       pages={1153\ndash 1173},
      review={\MR{3178593}},
}

\bib{le_spencer2015:intersective_polynomials_and}{article}{
      author={L{\^e}, Th{\'a}i~Ho{\`a}ng},
      author={Spencer, Craig~V.},
       title={Intersective polynomials and {D}iophantine approximation, {II}},
        date={2015},
        ISSN={0026-9255},
     journal={Monatsh. Math.},
      volume={177},
      number={1},
       pages={79\ndash 99},
         url={http://dx.doi.org/10.1007/s00605-014-0665-8},
      review={\MR{3336334}},
}

\bib{liu1970:fractional_parts_heta}{article}{
      author={Liu, Ming-Chit},
       title={On the fractional parts of {$\theta n^{k}$} and {$\phi n^{k}$}},
        date={1970},
        ISSN={0033-5606},
     journal={Quart. J. Math. Oxford Ser. (2)},
      volume={21},
       pages={481\ndash 486},
      review={\MR{0279046}},
}

\bib{losert_tichy1986:uniform_distribution_subsequences}{article}{
      author={Losert, V.},
      author={Tichy, R.~F.},
       title={On uniform distribution of subsequences},
        date={1986},
        ISSN={0178-8051},
     journal={Probab. Theory Relat. Fields},
      volume={72},
      number={4},
       pages={517\ndash 528},
         url={http://dx.doi.org/10.1007/BF00344719},
      review={\MR{847384}},
}

\bib{madritsch_tichy2016:dynamical_systems_and}{incollection}{
      author={Madritsch, Manfred~G.},
      author={Tichy, Robert~F.},
       title={Dynamical systems and uniform distribution of sequences},
        date={2016},
   booktitle={From arithmetic to zeta-functions},
   publisher={Springer, [Cham]},
       pages={263\ndash 276},
      review={\MR{3642360}},
}

\bib{matomaeki2009:distribution_alpha_p}{article}{
      author={Matom\"aki, Kaisa},
       title={The distribution of {$\alpha p$} modulo one},
        date={2009},
        ISSN={0305-0041},
     journal={Math. Proc. Cambridge Philos. Soc.},
      volume={147},
      number={2},
       pages={267\ndash 283},
         url={https://doi.org/10.1017/S030500410900245X},
      review={\MR{2525926}},
}

\bib{mauduit_rivat1995:repartition_des_fonctions}{article}{
      author={Mauduit, C.},
      author={Rivat, J.},
       title={R\'epartition des fonctions {$q$}-multiplicatives dans la suite
  {$([n\sp c])\sb {n\in\bold N},\ c>1$}},
        date={1995},
        ISSN={0065-1036},
     journal={Acta Arith.},
      volume={71},
      number={2},
       pages={171\ndash 179},
      review={\MR{MR1339124 (96g:11116)}},
}

\bib{mauduit_rivat2005:proprietes_q_multiplicatives}{article}{
      author={Mauduit, C.},
      author={Rivat, J.},
       title={Propri\'et\'es {$q$}-multiplicatives de la suite {$\lfloor
  n^c\rfloor$}, {$c>1$}},
        date={2005},
        ISSN={0065-1036},
     journal={Acta Arith.},
      volume={118},
      number={2},
       pages={187\ndash 203},
         url={http://dx.doi.org/10.4064/aa118-2-6},
      review={\MR{2141049 (2006e:11151)}},
}

\bib{mauduit_rivat2010:sur_un_probleme}{article}{
      author={Mauduit, Christian},
      author={Rivat, Jo{\"e}l},
       title={Sur un probl\`eme de {G}elfond: la somme des chiffres des nombres
  premiers},
        date={2010},
        ISSN={0003-486X},
     journal={Ann. of Math. (2)},
      volume={171},
      number={3},
       pages={1591\ndash 1646},
         url={http://dx.doi.org/10.4007/annals.2010.171.1591},
      review={\MR{2680394 (2011j:11137)}},
}

\bib{montgomery1994:ten_lectures_on}{book}{
      author={Montgomery, Hugh~L.},
       title={Ten lectures on the interface between analytic number theory and
  harmonic analysis},
      series={CBMS Regional Conference Series in Mathematics},
   publisher={Published for the Conference Board of the Mathematical Sciences,
  Washington, DC},
        date={1994},
      volume={84},
        ISBN={0-8218-0737-4},
      review={\MR{1297543 (96i:11002)}},
}

\bib{morgenbesser2011:sum_digits_lfloor}{article}{
      author={Morgenbesser, Johannes~F.},
       title={The sum of digits of {$\lfloor n^c\rfloor$}},
        date={2011},
        ISSN={0065-1036},
     journal={Acta Arith.},
      volume={148},
      number={4},
       pages={367\ndash 393},
         url={http://dx.doi.org/10.4064/aa148-4-4},
      review={\MR{2800701 (2012f:11195)}},
}

\bib{muellner_spiegelhofer2015:normality_thue}{article}{
      author={{M{\"u}llner}, C.},
      author={{Spiegelhofer}, L.},
       title={{Normality of the Thue--Morse sequence along Piatetski-Shapiro
  sequences, II}},
        date={2015-11},
     journal={ArXiv e-prints},
      eprint={1511.01671},
}

\bib{nair1992:certain_solutions_diophantine}{article}{
      author={Nair, R.},
       title={On certain solutions of the {D}iophantine equation {$x-y=p(z)$}},
        date={1992},
        ISSN={0065-1036},
     journal={Acta Arith.},
      volume={62},
      number={1},
       pages={61\ndash 71},
      review={\MR{1179010 (94a:11124)}},
}

\bib{nair1998:uniformly_distributed_sequences}{article}{
      author={Nair, R.},
       title={On uniformly distributed sequences of integers and {P}oincar\'e
  recurrence},
        date={1998},
        ISSN={0019-3577},
     journal={Indag. Math. (N.S.)},
      volume={9},
      number={1},
       pages={55\ndash 63},
         url={http://dx.doi.org/10.1016/S0019-3577(97)87567-2},
      review={\MR{1618231}},
}

\bib{nair1998:uniformly_distributed_sequences2}{article}{
      author={Nair, R.},
       title={On uniformly distributed sequences of integers and {P}oincar\'e
  recurrence. {II}},
        date={1998},
        ISSN={0019-3577},
     journal={Indag. Math. (N.S.)},
      volume={9},
      number={3},
       pages={405\ndash 415},
         url={http://dx.doi.org/10.1016/S0019-3577(98)80008-6},
      review={\MR{1692161}},
}

\bib{nathanson1996:additive_number_theory}{book}{
      author={Nathanson, M.~B.},
       title={Additive number theory},
      series={Graduate Texts in Mathematics},
   publisher={Springer-Verlag},
     address={New York},
        date={1996},
      volume={164},
        ISBN={0-387-94656-X},
        note={The classical bases},
      review={\MR{MR1395371 (97e:11004)}},
}

\bib{rice2013:sarkoezys_theorem_scr}{article}{
      author={Rice, Alex},
       title={S\'ark\"ozy's theorem for {$\scr P$}-intersective polynomials},
        date={2013},
        ISSN={0065-1036},
     journal={Acta Arith.},
      volume={157},
      number={1},
       pages={69\ndash 89},
         url={http://dx.doi.org/10.4064/aa157-1-4},
      review={\MR{3005099}},
}

\bib{ruzsa1984:connections_between_uniform}{incollection}{
      author={Ruzsa, I.~Z.},
       title={Connections between the uniform distribution of a sequence and
  its differences},
        date={1984},
   booktitle={Topics in classical number theory, {V}ol. {I}, {II} ({B}udapest,
  1981)},
      series={Colloq. Math. Soc. J\'anos Bolyai},
      volume={34},
   publisher={North-Holland},
     address={Amsterdam},
       pages={1419\ndash 1443},
      review={\MR{781190 (86e:11062)}},
}

\bib{sarkozy1978:difference_sets_sequences1}{article}{
      author={S{\'a}rk{\"{o}}zy, A.},
       title={On difference sets of sequences of integers. {I}},
        date={1978},
        ISSN={0001-5954},
     journal={Acta Math. Acad. Sci. Hungar.},
      volume={31},
      number={1--2},
       pages={125\ndash 149},
      review={\MR{0466059 (57 \#5942)}},
}

\bib{schmidt1977:small_fractional_parts}{book}{
      author={Schmidt, Wolfgang~M.},
       title={Small fractional parts of polynomials},
   publisher={American Mathematical Society, Providence, R.I.},
        date={1977},
        ISBN={0-8218-1682-9},
        note={Regional Conference Series in Mathematics, No. 32},
      review={\MR{0457360}},
}

\bib{slijepcevic2010:van_der_corput}{article}{
      author={Slijep{\v{c}}evi{\'c}, Sini{\v{s}}a},
       title={On van der {C}orput property of squares},
        date={2010},
        ISSN={0017-095X},
     journal={Glas. Mat. Ser. III},
      volume={45(65)},
      number={2},
       pages={357\ndash 372},
         url={http://dx.doi.org/10.3336/gm.45.2.05},
      review={\MR{2753306 (2012c:11017)}},
}

\bib{slijepcevic2013:van_der_corput}{article}{
      author={Slijep{\v{c}}evi{\'c}, Sini{\v{s}}a},
       title={On van der {C}orput property of shifted primes},
        date={2013},
        ISSN={0208-6573},
     journal={Funct. Approx. Comment. Math.},
      volume={48},
      number={part 1},
       pages={37\ndash 50},
         url={http://dx.doi.org/10.7169/facm/2013.48.1.4},
      review={\MR{3086959}},
}

\bib{spiegelhofer2014:piatetski_shapiro_sequences}{article}{
      author={Spiegelhofer, Lukas},
       title={Piatetski-{S}hapiro sequences via {B}eatty sequences},
        date={2014},
        ISSN={0065-1036},
     journal={Acta Arith.},
      volume={166},
      number={3},
       pages={201\ndash 229},
         url={http://dx.doi.org/10.4064/aa166-3-1},
      review={\MR{3283620}},
}

\bib{tichy_zeiner2010:baire_results_multisequences}{article}{
      author={Tichy, Robert},
      author={Zeiner, Martin},
       title={Baire results of multisequences},
        date={2010},
        ISSN={1336-913X},
     journal={Unif. Distrib. Theory},
      volume={5},
      number={1},
       pages={13\ndash 44},
      review={\MR{2804660}},
}

\bib{vaaler1985:some_extremal_functions}{article}{
      author={Vaaler, Jeffrey~D.},
       title={Some extremal functions in {F}ourier analysis},
        date={1985},
        ISSN={0273-0979},
     journal={Bull. Amer. Math. Soc. (N.S.)},
      volume={12},
      number={2},
       pages={183\ndash 216},
         url={http://dx.doi.org/10.1090/S0273-0979-1985-15349-2},
      review={\MR{776471 (86g:42005)}},
}

\bib{vinogradov1927:analytischer_beweis_des}{article}{
      author={Vinogradov, I.M.},
       title={Analytischer {B}eweis des {S}atzes über die {V}erteilung der
  {B}ruchteile eines ganzen {P}olynoms},
        date={1927},
     journal={Bull. Acad. Sci. USSR},
      volume={21},
      number={6},
       pages={567\ndash 578},
}

\bib{weyl1916:ueber_die_gleichverteilung}{article}{
      author={Weyl, H.},
       title={{\"Uber die Gleichverteilung von Zahlen mod. Eins.}},
    language={German},
        date={1916},
     journal={Math. Ann.},
      volume={77},
       pages={313\ndash 352},
}

\bib{wierdl1989:almost_everywhere_convergence}{book}{
      author={Wierdl, Mate},
       title={Almost everywhere convergence and recurrence along subsequences
  in ergodic theory},
   publisher={ProQuest LLC, Ann Arbor, MI},
        date={1989},
  url={http://gateway.proquest.com/openurl?url_ver=Z39.88-2004&rft_val_fmt=info:ofi/fmt:kev:mtx:dissertation&res_dat=xri:pqdiss&rft_dat=xri:pqdiss:9011289},
        note={Thesis (Ph.D.)--The Ohio State University},
      review={\MR{2638457}},
}

\bib{wooley1995:new_estimates_smooth}{article}{
      author={Wooley, Trevor~D.},
       title={New estimates for smooth {W}eyl sums},
        date={1995},
        ISSN={0024-6107},
     journal={J. London Math. Soc. (2)},
      volume={51},
      number={1},
       pages={1\ndash 13},
         url={https://doi.org/10.1112/jlms/51.1.1},
      review={\MR{1310717}},
}

\bib{wooley2016:cubic_case_main}{article}{
      author={Wooley, Trevor~D.},
       title={The cubic case of the main conjecture in {V}inogradov's mean
  value theorem},
        date={2016},
        ISSN={0001-8708},
     journal={Adv. Math.},
      volume={294},
       pages={532\ndash 561},
         url={https://doi.org/10.1016/j.aim.2016.02.033},
      review={\MR{3479572}},
}

\bib{zaharescu1995:small_values_n}{article}{
      author={Zaharescu, Alexandru},
       title={Small values of {$n^2\alpha\pmod 1$}},
        date={1995},
        ISSN={0020-9910},
     journal={Invent. Math.},
      volume={121},
      number={2},
       pages={379\ndash 388},
         url={http://dx.doi.org/10.1007/BF01884304},
      review={\MR{1346212 (96d:11079)}},
}

\end{biblist}
\end{bibdiv}

\end{document}